\def\R{\mathbb{R}}
\def\S{\mathbb{S}}
\def\bx{\boldsymbol{x}}
\def\bq{\boldsymbol{q}}
\def\bu{\boldsymbol{u}}
\def\br{\boldsymbol{r}}
\def\eps{\varepsilon}
\newcommand{\norm}[1]{\lVert#1\rVert}
\newcommand{\abs}[1]{\lvert#1\rvert}
\DeclareMathOperator{\tv}{Tot.Var.}
\newtheorem{Def}{Definition}
\newtheorem{Lemma}{Lemma}
\newtheorem{Prop}[Lemma]{Proposition}  
\newtheorem{rem}{Remark}
\newenvironment{proof}[1][Proof]{\begin{trivlist}
\item[\hskip \labelsep {\bfseries #1}]}{\end{trivlist}}
\begin{document}

\title{Numerical simulations of the Euler system with congestion
  constraint}
\author{Pierre Degond\footnote{Universit\'e de Toulouse; UPS, INSA, UT1, UTM; 
   Institut de Math\'ematiques de Toulouse; F-31062 Toulouse,
   France. Email: pierre.degond@math.univ-toulouse.fr}, Jiale Hua\footnote{CNRS; Institut de Math\'ematiques de Toulouse UMR 5219;
   F-31062 Toulouse, France.  Email: jiale.hua@math.univ-toulouse.fr}, Laurent
 Navoret\footnote{CNRS; Institut de Math\'ematiques de Toulouse UMR
   5219; F-31062 Toulouse, France.  Email: laurent.navoret@math.univ-toulouse.fr}}
\date{}
\maketitle
\begin{abstract}

In this paper, we study the numerical simulations for Euler system with maximal density
constraint. This model is developed in
\cite{2008_Traffic_DegondRascle,2010_Congestion} with the constraint 
 introduced into the system by a singular
pressure law, which causes the transition of different asymptotic
dynamics between different regions.  To overcome these difficulties, we adapt and
implement two asymptotic preserving (AP) schemes originally designed
for low Mach number limit
\cite{2007_Low_mach_D_SJ_JGL,2009_Degond_LowMach} to our
model. These schemes work for  the different dynamics and capture the
transitions well. Several numerical tests both in one dimensional and
two dimensional cases are carried out for our schemes.
\end{abstract}

\noindent {\bf Key words: }  Finite volume scheme, Congestion, Asymptotic-Preserving schemes, All-speed flows, Pressureless Gas Dynamics 

\section{Introduction}

Several models involve congestion constraints: concentration
constraints occur in multi-phase flow modeling
\cite{2PhaseFlow_Bouchut_al}, maximal density constraints occur when
dealing with finite-size interactive agents in herds of gregarious
mammals \cite{2010_Congestion}, in cars or pedestrians flows
\cite{2008_Traffic_DegondRascle,2008_Trafficflow_Berthelin_D,2008_ModCrowd_Bellomo},
flux constraints occur for supply chains
\cite{2006_ArmDegRing},... The dynamics in congested regions strongly differ from the dynamics
dynamics in free regions. To study the transitions between congested and free regions, a general methodology was first carried out in \cite{2PhaseFlow_Bouchut_al} for multiphase flows and later on generalized to traffic \cite{2008_Traffic_DegondRascle} or herding problems \cite{2010_Congestion}: the stiffness of the constraint leads to a singular perturbation problem and then the limit problem provides a clear cut-off between the two dynamics. In this paper, we will consider the Euler system with a singular pressure which encodes a maximal density constraint. As in \cite{2010_Congestion}, the limit problem is a two-phase model between incompressible regions, where the maximal density is reached, and compressible regions for lower densities. Our goal is to provide numerical schemes that are able to capture this limit problem and these transitions. In this paper, we adapt and compare two numerical methods presented in \cite{2009_Degond_LowMach} and in \cite{2007_Low_mach_D_SJ_JGL} for the low Mach number limit.

A lot of efforts have been made to devise numerical schemes valid for all Mach numbers, that is, for both compressible and incompressible flows. They avoid the switch between different methods, when different Mach numbers occur in different sub-domains. Among such schemes, one approach is the extension of compressible conservative methods to incompressible flows thanks to preconditioning techniques \cite{1999_Turkel_Precond,1999_GuillardViozat,2001_GuillardMurrone,2008_LiGu}. The second approach is the extension of incompressible methods to compressible flows and leads to pressure correction methods on staggered grids \cite{1971_HarlowAmsden_ICE_LM} and their conservative versions \cite{1980_Patankar_book,1998_BijlWesseling,2003_HeulWesseling}. Their adaptation to the conservative frameworks has led to time semi-implicit schemes : the implicit discretization of the fluxes (the mass flux and the pressure part of the momentum flux are taken implicitly) is combined with the resolution of the elliptic equation satisfied by the pressure. The implicit treatment of the pressure flux ensures stability with respect to the propagation of fast acoustic waves in the low-Mach number limit but induces a lot of diffusion. We can cite numerous works following this methodology \cite{1995_Klein_LM,2002_WallPierceMoin,2003_Munz_MPV,2005_Nerinckx,2009_Rauwoens}. The methods we consider in this paper are among the simplest ones: the scheme in \cite{2009_Degond_LowMach} is a semi-implicit scheme with a division of the pressure into explicit and implicit parts and in \cite{2007_Low_mach_D_SJ_JGL}, the Gauge decomposition of the momentum enables the hydrostatic pressure to act only on the divergence-free part of the momentum. The former method will be called in the present paper the Direct method, while the Gauge method will refer to the latter.

The purpose of this paper is to present simple variants of the Direct
\cite{2009_Degond_LowMach} and the Gauge method
\cite{2007_Low_mach_D_SJ_JGL}, that are able to handle congestion
problems. As announced above, they are designed to solve the
isentropic Euler system supplemented by a pressure law $p(\rho)$,
which is singular as the density $\rho$ approaches a maximal density
denoted $\rho^{\ast}$. A small parameter $\eps$ is introduced to
control the stiffness of this maximal density constraint: the rescaled
pressure $\eps p(\rho)$ is of order $O(1)$ in congested regions $\rho
\sim \rho^{\ast}$ and of order $O(\eps)$ in low density regions $\rho
< \rho^{\ast}$. In the limit $\eps \rightarrow 0$, the system leads to
a two-phase model: the incompressible Euler system in maximal density
domains and the pressureless gas dynamics system for uncongested
densities domains. This asymptotic model was first proposed and
studied in \cite{2PhaseFlow_Bouchut_al,2002_ExistPressLess_Berth} in a
one-dimensional framework. However, this asymptotic model is only
partially defined since transmission conditions at the interfaces
between the two phases are lacking. Besides, unless one-dimensional
solutions can be provided (see \cite{2PhaseFlow_Bouchut_al} and
appendix~\ref{sec:one-dimens-riem}), their extensions to the
two-dimensional case are open problems (especially the dynamics of two
colliding congested domains). In this context, asymptotic preserving
scheme is a good tool for this problem.  

The Direct and the Gauge methods are called asymptotic-preserving (AP)
since they are uniformly consistent with the low-Mach number
limit. Besides, they are also uniformly stable. These methods are
expected to capture both the compressible and the incompressible
dynamics arising in the congestion limit of the Euler system with the
maximal density constraint. In this case, such AP numerical schemes
are very powerful since they provide the dynamics of transitions, for
which analytical results may be lacking. Moreover, they enable us to avoid  dealing with physical and numerical interface tools, such as front-tracking \cite{2001_FrontTrackMultiphase_tryggvason} or volume-of-fluid methods \cite{1981_HirtNichols_VOF}. Unlike these tracking methods, ours are front-capturing methods and then share some analogies with level-set \cite{2003_LevelSet_BookOsher} and diffusive interface methods \cite{1998_Anderson_DiffInterface}: like level-set method, the dynamics of the transition are implicitly embodied in the dynamics of an auxiliary function which here is the density and like the diffusive interface methods, the sharp interface is viewed as the limit of the smooth transitions of the perturbation problem.

The Direct method cannot be directly applied to the congestion
problem. Indeed, in \cite{2009_Degond_LowMach}, the pressure $p(\rho)$
is splitted into an explicit part $p_0(\rho)$ and an implicit part $p_1(\rho)$ in order to keep some numerical diffusion and avoid numerical oscillations. For the singular congestion pressure-law, we modify this splitting, such that it still ensures the stability of the scheme. Besides, it ensures the consistency of the explicit part of the scheme with the limit pressureless gas dynamics in the low density regions. Indeed, the pressureless gas dynamics system is weakly hyperbolic and there is no uniqueness of the entropic solution \cite{PGD_Bouchut}. Then, keeping an explicit pressure $p_0(\rho)$ makes the asymptotic numerical solutions consistent with the good entropic solutions. For the same reasons, this pressure splitting is introduced into the Gauge method. 

The AP property of the two methods for the congested Euler system is demonstrated for the congested domains. This analysis is hard to extend to coexistent congested and uncongested regions since the dynamics of the interfaces between the different regions is not explicitly implemented into the schemes. However, several numerical test-cases provides numerical evidence of the AP property. Comparisons of the two schemes are also carried out and different behaviours of the schemes at the interfaces are measured.
 
The paper is organized as follows. In section 2, we introduce the
Euler system with the maximal density constraint. To have some basic
idea of the solution, we give its formal asymptotic limit. In section
3, we describe the time semi-discretization of the Direct and Gauge
schemes. The fully space and time discretizations are exposed in
section 4, in one  dimensional setting: they are based on the
centered Rusanov scheme, also called the local Lax-Friedrichs scheme
\cite{leveque_book_2002}. The two dimensional setting is quite
similar. Therefore, we present it in the appendix. Finally, numerical
simulations are performed in section 5 to compare the two schemes:
several test cases in the one dimensional setting and one case in the two
dimensional setting.
Two appendices close this study: we describe one-dimensional solutions
and the two dimensional discretizations. 

\section{The Euler system with congestion and its asymptotic limit}
\label{sec:euler-with-cong}
\subsection{The model}
We consider the two-dimensional Euler system:
\begin{eqnarray}
&&\partial_{t}\rho + \nabla_{\bx}\cdot \bq = 0,\label{Eq:Euler_rho}\\
&&\partial_{t}\bq + \nabla_{\bx}\cdot\left(\frac{\bq\otimes \bq}{\rho}\right) + \nabla_{\bx} p(\rho) = 0,\label{Eq:Euler_q}
\end{eqnarray}
where $\rho(\bx,t) \in \R$ denotes the mass density, $\bq = \rho
\bu(\bx,t) \in \R^2$ is the momentum  field depending on the position $\bx \in \R^2$ and the time $t > 0$. The pressure $p(\rho)$ is an increasing function such that $p(\rho) \sim \rho^{\gamma}$ for densities $\rho \ll 1$ and $p(\rho) \rightarrow +\infty$ as $\rho$ tends to the congestion density $\rho^{\ast}$. In the following, we will consider the function:
\begin{equation}
p(\rho) = \frac{1}{\left(\frac{1}{\rho} -
    \frac{1}{\rho^{\ast}}\right)^{\gamma}}, \quad \gamma>0.
\end{equation}
This pressure prevents the density from exceeding the congestion density. A variant is the van der Waals equation of state \cite{1986_VanDerWaalsRiem_Hattori}. The operators $\nabla_{\bx}$ and $\nabla_{\bx}\cdot$ are the gradient and the divergence of vector fields or tensor. For two vectors $\boldsymbol{a}$ and $\boldsymbol{b}$, $\boldsymbol{a}\otimes \boldsymbol{b}$ denotes the tensor product.  

This model already appears in \cite{2010_Congestion} with the
additional constraint: $\bq/\rho = \bu \in \S^{1}$. In this paper, we
focus on the pressure singularity and the corresponding numerical schemes. 

The singular pressure induces two different dynamics: for regions with
densities near $\rho^{\ast}$, the pressure takes very large values in
comparison with the pressure in low-density regions. As in
\cite{2010_Congestion} and previous work on traffic modeling
\cite{2008_Traffic_DegondRascle}, we would like to clearly separate
the two different dynamics. To this aim, we rescale $p(\rho)$ into
$\eps p(\rho)$, where the parameter $\eps \ll 1$ is the scale of the
pressure in the low density regions: $p(\rho) = O(\eps)$ for density
$\rho \ll 1$ while $p(\rho) = O(1)$ for density $\rho \sim
\rho^{\ast}$, see Fig.~\ref{Fig:15}.
\begin{figure}
  \centering
  \psfrag{rho}{$\rho$}
  \psfrag{eps p}{$\varepsilon p$}
  \psfrag{p}{$p$}
 \includegraphics[scale=0.5]{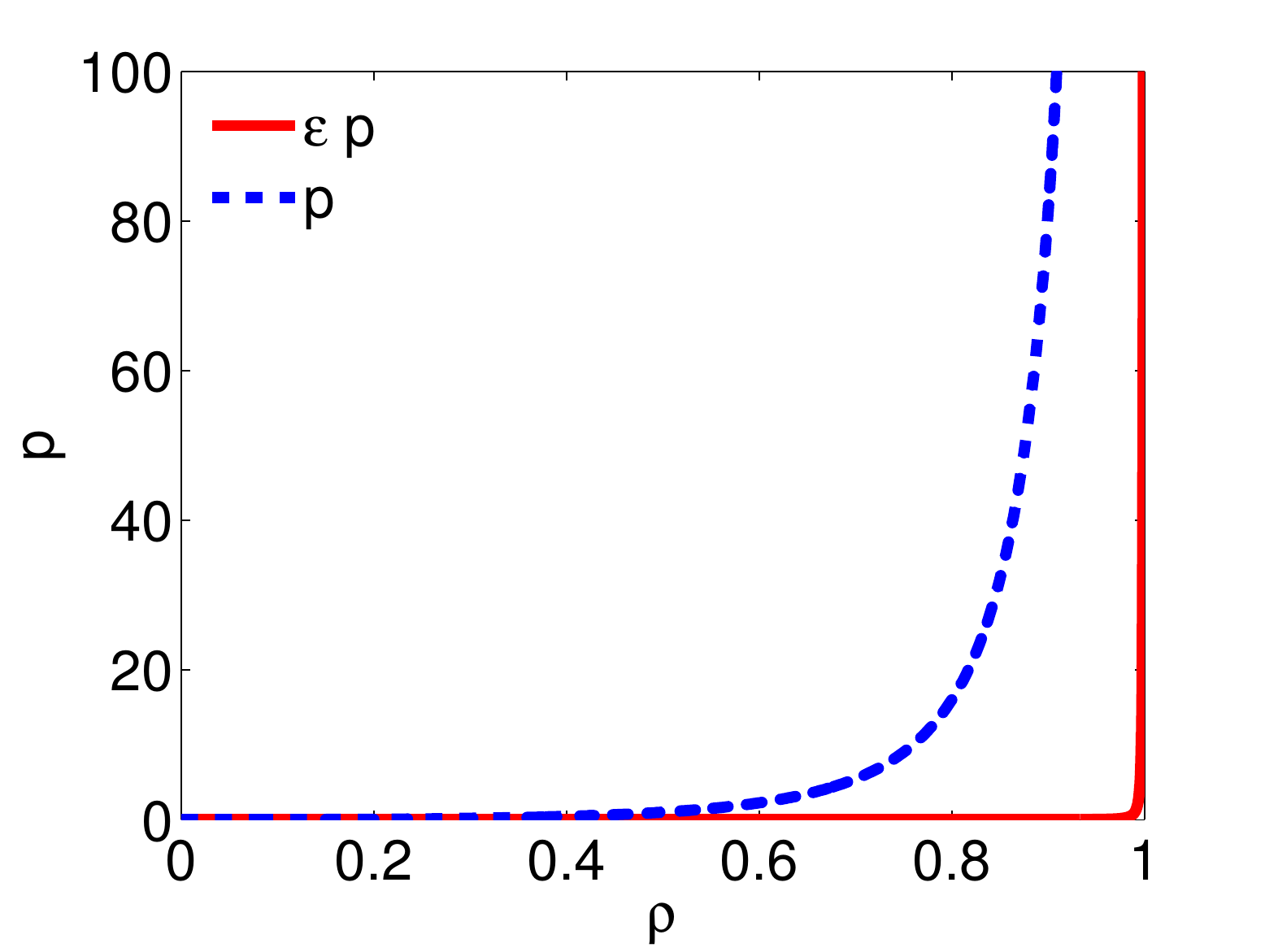}
  \caption{The pressure $p$ and the rescaled pressure $\varepsilon p$}
  \label{Fig:15}
\end{figure}

Denoting $\rho^{\eps}$ and $\bq^{\eps}$ the new unknowns, system (\ref{Eq:Euler_rho})-(\ref{Eq:Euler_q}) becomes:
\begin{eqnarray}
&&\partial_{t}\rho^{\eps} + \nabla_{\bx}\cdot \bq^{\eps} = 0,\label{Eq:Euler_rho_eps}\\
&&\partial_{t}\bq^{\eps} + \nabla_{\bx}\cdot\left(\frac{\bq^{\eps}\otimes \bq^{\eps}}{\rho^{\eps}}\right) + \nabla_{\bx} \left(\eps p(\rho^{\eps})\right) = 0,\label{Eq:Euler_q_eps}
\end{eqnarray}   
Moreover, taking the time derivative of (\ref{Eq:Euler_rho_eps}) and subtracting the divergence of equation (\ref{Eq:Euler_q_eps}), we easily obtain the wave-like equation satisfied by the density:
\begin{equation*}
\partial_{t^2}\rho - \nabla_{\bx}^2:\left(\frac{\bq^{\eps}\otimes \bq^{\eps}}{\rho^{\eps}}\right) - \Delta_{\bx} (\eps p(\rho^{\eps})) = 0,
\end{equation*}
where $\nabla_{\bx}^2$ denotes the tensor of the second derivatives
and for two tensors $\boldsymbol{a}$ and $\boldsymbol{b}$ and $\boldsymbol{a}:\boldsymbol{b}$ denotes the contracted product of tensor. The operator $\Delta_{\bx}$ denotes the Laplacian of a scalar function. Actually, system (\ref{Eq:Euler_rho_eps})-(\ref{Eq:Euler_q_eps}) is a strictly hyperbolic problem, with characteristic wave speeds in the $x$-direction (where $x$ is the first component of a basis of $\R^2$) given by: 
\begin{equation}
\lambda_{1}^{\eps} = u^{\eps}_{x} - \sqrt{\eps p'(\rho^{\eps})},\quad \lambda_{2}^{\eps} = u^{\eps}_{x},\quad \lambda_{3}^{\eps} = u^{\eps}_{x} + \sqrt{\eps p'(\rho^{\eps})},\label{Eq:caracteristic_speed}
\end{equation}
where $u_{x}^{\eps}$ is the $x$-component of the macroscopic velocity $\bu^{\eps}(x,t) = \bq^{\eps}(x,t)/\rho^{\eps}(x,t)$. Standard hyperbolic numerical schemes have to resolve the Courant-Friedrichs-Levy (CFL) condition:
\begin{equation}
\max(|\lambda_{1}^{\eps}|,|\lambda_{2}^{\eps}|,|\lambda_{3}^{\eps}|) \Delta t  \leq \Delta x.\label{Eq:CFL_cond}
\end{equation}  
In the next section, we will see that this constraint may be too stringent for these schemes to capture the asymptotic limit. 

\subsection{The asymptotic limit} 

The limit of the pressure term $\eps p(\rho^{\eps})$ depends on the limit of $\rho^{\eps}$. Indeed, if $\rho^{\eps} \rightarrow \rho$ with $\rho < \rho^{\ast}$, then $\eps p(\rho^{\eps})$ converges to $0$. Otherwise, $\rho^{\eps} \rightarrow \rho^{\ast}$ and the limit of $\eps p(\rho^{\eps})$, denoted $\bar{p}$, can be non zero and depends on the convergence rate of $\rho^{\eps}$. We assume that the limit $\bar{p}$ is always finite. Therefore, the formal limit of system (\ref{Eq:Euler_rho_eps})-(\ref{Eq:Euler_q_eps}) as $\eps$ goes to zero is:
\begin{eqnarray}
&&\partial_{t}\rho + \nabla_{\bx}\cdot \bq = 0,\label{Eq:mass}\\
&&\partial_{t}\bq + \nabla_{\bx}\cdot\left(\frac{\bq\otimes \bq}{\rho}\right) + \nabla_{\bx} \bar{p} = 0,\label{Eq:momentum}\\
&&(\rho - \rho^{\ast})\bar{p} = 0.\label{Eq:constraint_CongNum}
\end{eqnarray}  
A one-dimensional version of this asymptotic model was proposed for two-phase flow modeling in \cite{2PhaseFlow_Bouchut_al}, where the density plays the role of the volume fraction of liquid in a liquid-gas model. The derivation of the model lies on a relaxation to zero of the relative velocities of the gas and liquid and is therefore different from the one studied in this article.  Existence and stability of solutions are proved for the one-dimensional version of system (\ref{Eq:mass})-(\ref{Eq:momentum})-(\ref{Eq:constraint_CongNum}) in \cite{2002_ExistPressLess_Berth}, and in all dimensions with viscous term in \cite{1999_barotropic_LionsMasmoudi}.

As regards the characteristic speeds, we note that if $\eps
p(\rho^{\eps})$ tends to $\bar{p} < + \infty$,  then we have
$\rho^{\ast} - \rho^{\eps} = O(\eps^{\frac{1}{\gamma}})$ and then
$\eps p'(\rho^{\eps}) = O(\eps^{\frac{1}{\gamma} - 1})$. Therefore, if
$\gamma > 1$, $\lambda_{\pm}^{\eps}$ can become infinite and waves
with infinite speed can occur. It is the low-Mach number asymptotics
that leads to incompressible dynamics. Actually, in the congested domain where $\rho = \rho^{\ast}$, system (\ref{Eq:mass})-(\ref{Eq:momentum})-(\ref{Eq:constraint_CongNum}) yields the incompressible Euler equation:  
\begin{eqnarray}
&&\rho = \rho^{\ast},\nonumber\\
&&\nabla_{\bx}\cdot \bu = 0,\label{Eq:Incomp_constraint}\\
&&\partial_{t}\bu + \bu\cdot\nabla_{\bx}\bu + \frac{1}{\rho^{\ast}}\nabla_{\bx} \bar{p} = 0.\nonumber
\end{eqnarray} 
Equation (\ref{Eq:Incomp_constraint}) is the incompressible constraint and the Lagrange multiplier related to this constraint is the pressure $\bar{p}$. The CFL condition (\ref{Eq:CFL_cond}) degenerate into $\Delta t = 0$: standard hyperbolic schemes are unable to compute the asymptotic dynamics. Thus, numerical schemes with relaxed CFL condition have to be designed.  

In the free domain where $\rho < \rho^{\ast}$, the CFL condition (\ref{Eq:CFL_cond}) is not an obstacle although the system degenerates into a non-hyperbolic problem: $\lim \lambda_{1}^{\eps} = \lim \lambda_{3}^{\eps} = u$. This is a large-Mach number asymptotic. Numerical schemes, originally developed for hyperbolic systems, have to be proved to capture this singular limit. System (\ref{Eq:mass})-(\ref{Eq:momentum})-(\ref{Eq:constraint_CongNum}) yields the pressureless gas dynamics:
\begin{eqnarray*}
&&\rho < \rho^{\ast},\\
&&\partial_{t}\rho + \nabla_{\bx}\cdot \bq = 0,\\
&&\partial_{t}\bq + \nabla_{\bx}\cdot\left(\frac{\bq\otimes \bq}{\rho}\right) = 0,
\end{eqnarray*} 
Without upper-bound on the density, this system would lead to concentration phenomena even for smooth initial data and so the density may become a measure with singular part. It is related to the so-called sticky particle dynamics. Existence of solutions and numerical schemes have been developed in the one-dimensional case \cite{PGD_Bouchut,1998_BrGr_sticky}.

The asymptotic system
(\ref{Eq:mass})-(\ref{Eq:momentum})-(\ref{Eq:constraint_CongNum}) is
not complete: the well-posed definition of $\bar{p}$ requires boundary
conditions at the interface between congested regions $\left\{\rho =
  \rho^{\ast}\right\}$ and free regions $\left\{\rho <
  \rho^{\ast}\right\}$. They can not be directly obtained by the
formal derivation. One possible answer to this question is to look at
the theoretical asymptotic behaviour of solutions of the initial
system (\ref{Eq:Euler_rho_eps})-(\ref{Eq:Euler_q_eps}). Such an
approach is investigated in appendix~\ref{sec:one-dimens-riem} but only in the
one-dimensional case. Extensions to two dimensional settings are difficult
and will be the subject of future works. The second possible approach
is to use numerical schemes to capture the asymptotic dynamics: this
is the main methodology we develop in this paper. It has the advantage
to be applicable in any dimensions. However, one-dimensional problems are
still good test cases to valid these numerical schemes.   

\section{Time semi-discretization schemes}
This section is the center of this paper. It is dedicated to the presentation of two numerical schemes, which are able to capture the asymptotic limit of the Euler system with congestion presented in the previous section. For this purpose, we adapt the asymptotic-preserving (AP) schemes developed in \cite{2009_Degond_LowMach} and \cite{2007_Low_mach_D_SJ_JGL} for the low-Mach limit of the isentropic Euler system.

\subsection{The time semi-implicit discretization}

 We first define a
time semi-implicit discretization, which will be the building block of
the considered AP schemes.

Let $\rho^n,q^n$ be the approximations of the density $\rho$ and the
momentum $q$ at $t^n=n\Delta t,
n=0,1,\ldots$, where $\Delta t$ is the time step.
The semi-discretization of the AP scheme for the n-th time step is as follows:

\begin{align}
 & \frac{\rho^{n+1} - \rho^{n}}{\Delta t} + \nabla_{\boldsymbol{x}}\cdot \boldsymbol{q}^{n+1} = 0,\\
&\frac{\boldsymbol{q}^{n+1} - \boldsymbol{q}^{n}}{\Delta t} + \nabla_{\boldsymbol{x}}\cdot\left(\frac{\boldsymbol{q}^n \otimes \boldsymbol{q}^n }{\rho^{n}}\right) +   \nabla_{\boldsymbol{x}} (\varepsilon p(\rho^{n+1})) = 0.
\end{align}
The full discretization in time and space is postponed to the next
section. We want to show that the scheme is asymptotic-preserving. In
other words, it captures the correct behaviour of the limiting
equation as $\varepsilon\to 0$. To achieve this, the implicitness of
$\rho,\boldsymbol{q}$ is crucial. 

Observe that the explicit part of the above scheme is pressureless. 
However, the pressureless Euler system is  weakly hyperbolic, giving
rise to the formation of density concentrations known as
delta-shocks. Several numerical schemes for this system was proposed
in
\cite{2004_NumericPGD_BouchutJin,2004_Leveque_Dust,2006_BertBreussTit_pressureless,2007_ChertKurgRyk_StickyPart}. In
\cite{2004_NumericPGD_BouchutJin}, the authors proposed a kinetic
scheme, that is valid for the isothermal Euler system and leads to a
kinetic scheme for the pressureless system in the vanishing pressure
limit. Here, to avoid this difficulty and as already proposed in
\cite{2009_Degond_LowMach}, we split the pressure into an explicit and
an implicit part. Numerical tests in section~\ref{sec:Num_result} will
demonstrate how this splitting reduces oscillations. Thus, the scheme is written as follows:
\begin{align}
&\frac{\rho^{n+1} - \rho^{n}}{\Delta t} + \nabla_{\boldsymbol{x}}\cdot \boldsymbol{q}^{n+1} = 0,\label{eq:1}\\
&\frac{\boldsymbol{q}^{n+1} - \boldsymbol{q}^{n}}{\Delta t} + \nabla_{\boldsymbol{x}}\cdot\left(\frac{\boldsymbol{q}^n\otimes \boldsymbol{q}^n }{\rho^{n}}\right) +  \nabla_{\boldsymbol{x}} (\varepsilon p_0(\rho^{n})) +\nabla_{\boldsymbol{x}} (\varepsilon p_1(\rho^{n+1})) = 0,\label{eq:2}
\end{align}
where the explicit part is given as
\begin{gather}
       p_0(\rho)=
  \begin{cases}
    \cfrac{1}{2} p(\rho),& \text{ if } \rho \leq \rho_*-\delta,\\[3ex]
    \begin{split}
      \frac{1}{2}\big( &p( \rho_*-\delta)+p'( \rho_*-\delta)(\rho-
      \rho_*+\delta)\\
      &+\frac{1}{2}p''(\rho_*-\delta)(\rho-
      \rho_*+\delta)^2\big)
    \end{split}
,& \text{ if } \rho>  \rho_*-\delta,
  \end{cases}
\end{gather}
and the implicit part is
\begin{gather}
p_1(\rho)=p(\rho)-p_0(\rho), \quad \delta=\varepsilon^{\frac{1}{\gamma+2}}.
\end{gather}
The choice of $\delta$ makes sure that $p_0$ and its derivatives up to
second order are always bounded.   To make sure all the coefficients
appearing in the elliptic equation we will derive in the next section are continuous,  we choose $p_0$ to be a
second order approximation to $p$,  instead of a first order one. For
later usage, also note that the function
$p_1(\rho)$ is invertible. This is easily seen from the property of
function $p$ and $p_0$, see Fig~\ref{fig:9}.
\begin{figure}
  \centering
  \psfrag{rho}{$\rho$}
  \psfrag{eps p}{$\varepsilon p$}
  \psfrag{eps p0}{$\varepsilon p_0$}
  \includegraphics[scale=0.5]{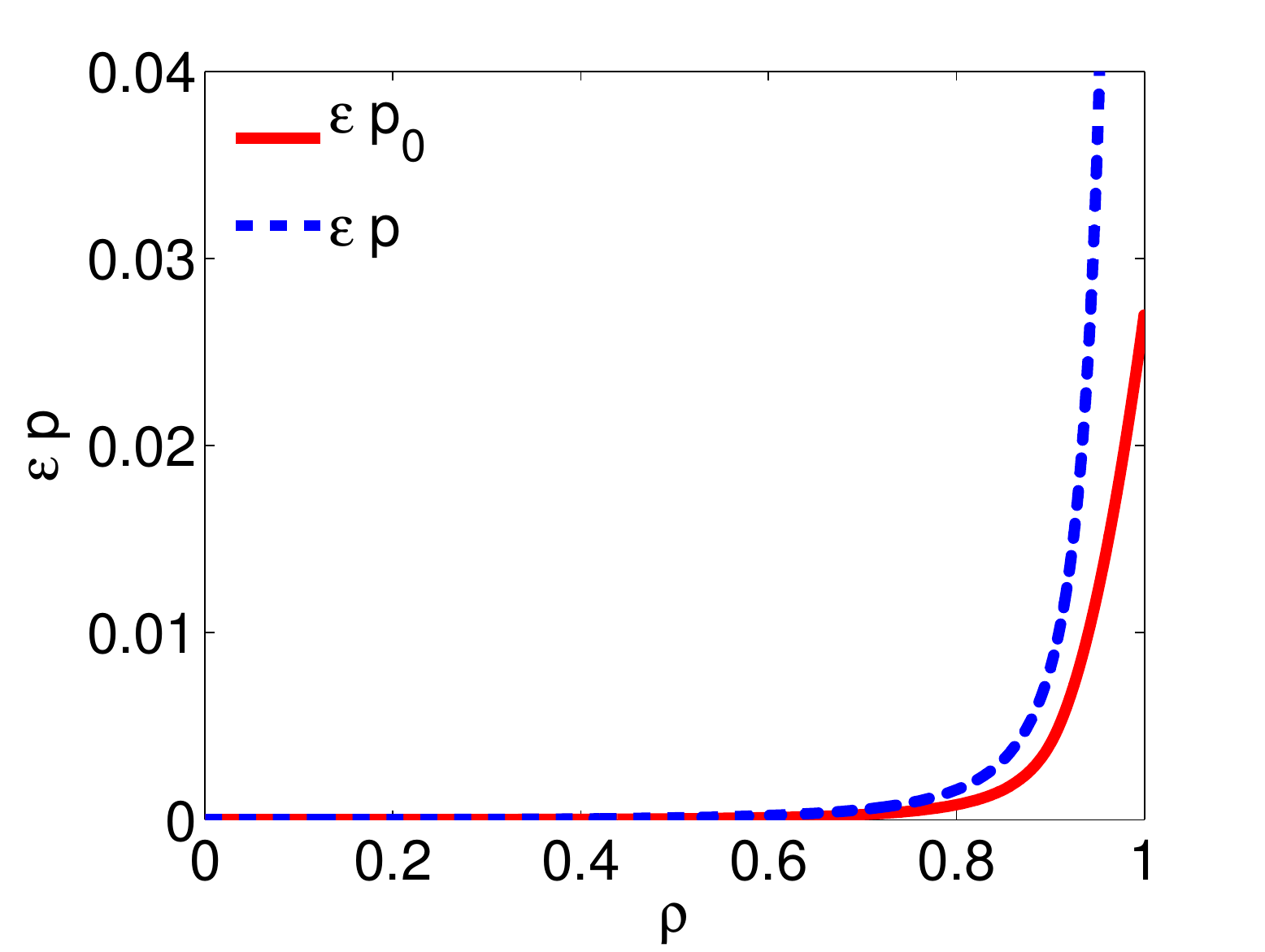}
\caption{The plots of $\varepsilon p$ and $\varepsilon p_0$ as functions of
  $\rho$ with $\varepsilon=10^{-4}, \gamma=2$. }
\label{fig:9}
 \end{figure}
 By the definition, the
Courant-Friedrich-Lewy (CFL) condition for the explicit part is
\begin{gather}
   \Delta t\leq  \frac{\sigma\Delta x}{\max \{|\frac{\boldsymbol{q}}{\rho}|+\sqrt{
       \varepsilon p_0'(\rho)}\}}, 
\end{gather}
where $\sigma$ is the Courant number. Since $\varepsilon p_0'$ is always bounded,
 the CFL condition can be satisfied uniformly in $\varepsilon$.

\subsection{The Direct method}
\label{sec:direct-method}
To get the solution, we will rewrite the above  scheme into another  form.  By
inserting \eqref{eq:2} into \eqref{eq:1}, we can  get an elliptic equation
for $\rho$:
\begin{gather}
  \begin{split}
    &\frac{\rho^{{n+1}} - \rho^{{n}}}{\Delta t} +
    \nabla_{\boldsymbol{x}}\cdot \boldsymbol{q}^{n}-\Delta t 
    \Delta_{\boldsymbol{x}}(\varepsilon p_1(\rho^{{n+1}})) -\Delta t
    \nabla_{\boldsymbol{x}}^2:\left(\frac{\boldsymbol{q}^{n}\otimes \boldsymbol{q}^n }{\rho^{n}}\right) - \Delta t\Delta_{\boldsymbol{x}}\left(\varepsilon p_0(\rho^{{n}})\right)= 0.\label{eq:3}
  \end{split}
\end{gather}
From this equation, we can solve $\rho^{n+1}$. However, if we solve
$\rho$ directly, the density constraint $\rho\leq \rho^*$ may not be
satisfied due to discretization errors. Thus, we write $\rho^{n+1}=\rho(p_1^{n+1})$ in \eqref{eq:3}
and solve the equation  in terms of $p_1$. The density constraint
$\rho\leq \rho^*$  will be automatically satisfied.  Moreover, the
positivity of $\rho$ can be ensured by the fact that the discretized
equation satisfies  the maximal principle. 

Once  $\rho^{n+1}$ is obtained, we can obtain $\boldsymbol{q}^{n+1}$ from
\eqref{eq:2} easily.
\begin{gather}\label{eq:22}
 \boldsymbol{q}^{n+1} = \boldsymbol{q}^{n} -{\Delta t} \left\{
   \nabla_{\boldsymbol{x}}\cdot\left(\frac{\boldsymbol{q}^n\otimes \boldsymbol{q}^n }{\rho^{n}}\right)+\nabla_{\boldsymbol{x}}\left(\varepsilon
     p_0(\rho^{n})\right) +  \nabla_{\boldsymbol{x}} (\varepsilon p_1(\rho^{n+1}))\right\}.
\end{gather}

\begin{rem}
  In the numerical simulation, we can also improve the accuracy by
implementing a fully implicit scheme, which iterates the above scheme
to solve \eqref{eq:1} and \eqref{eq:2} with $\frac{\boldsymbol{q}\otimes \boldsymbol{q}}{\rho}$
implicit. Suppose $\rho^{n+1,0}=\rho^n$ and $\boldsymbol{q}^{n+1,0}=\boldsymbol{q}^n$  and $\rho^{n+1,k}$, $\boldsymbol{q}^{n+1,k}$ are the solutions to
the following equations.
\begin{align}
&\frac{\rho^{{n+1},k+1} - \rho^{{n}}}{\Delta t} +
     \nabla_{\boldsymbol{x}}\cdot \boldsymbol{q}^{n}-\Delta t
    \Delta_{\boldsymbol{x}}(\varepsilon p_1(\rho^{{n+1},k+1})) \nonumber \\
&\qquad\qquad\qquad -\Delta t\
    \nabla_{ \boldsymbol{x}}^2:\left(\frac{\boldsymbol{q}^{n+1,k}\otimes \boldsymbol{q}^{n+1,k} }{\rho^{n+1,k}}\right)+\Delta_{ \boldsymbol{x}}\left(\varepsilon p_0(\rho^{{n}})\right)= 0,
  \label{eq:17}\\
&\boldsymbol{q}^{n+1,k+1} = \boldsymbol{q}^{n} -{\Delta t} \left\{
  \nabla_{\boldsymbol{x}}\left(\frac{\boldsymbol{q}^{n+1,k}\otimes
      \boldsymbol{q}^{n+1,k} }{\rho^{n+1,k}}\right)+
  \nabla_{\boldsymbol{x}}\left(\varepsilon p_0(\rho^{n})\right) +
  \nabla_{\boldsymbol{x}}(\varepsilon p_1(\rho^{n+1,k+1}))\right\}.\label{eq:18}
\end{align}
As $k\to \infty$, the solution approximates to the one solving the
fully implicit scheme (both in $\rho$ and $\frac{\boldsymbol{q}\otimes \boldsymbol{q}
}{\rho}$). This modification provides little improvement compared to
the additional computational  cost.
\end{rem}

\subsection{The Gauge method}
\label{sec:gauge-method}

Another way to implement the AP scheme is the Gauge method developed
in \cite{2007_Low_mach_D_SJ_JGL}. It can be obtained by applying the Gauge
decomposition
\begin{gather}
  \boldsymbol{q}=\boldsymbol{a}-\nabla_{\boldsymbol{x}} \varphi, \quad \nabla_{\boldsymbol{x}} \cdot \boldsymbol{a}=0
\end{gather}
where $\boldsymbol{a}$ is the incompressible part of field $\boldsymbol{q}$ and $\varphi$ is the
irrotational one. This decomposition is expected to be more robust
for capturing incompressibility constraint. We will see that it is
partially right. By including this decomposition into equations \eqref{eq:1} and
\eqref{eq:2}, we get 
{\allowdisplaybreaks
\begin{align}
  \begin{split}
    &\frac{\rho^{{n+1}} - \rho^{{n}}}{\Delta t} +
    \nabla_{\boldsymbol{x}}\cdot  \boldsymbol{q}^{n}-\Delta t
    \Delta_{\boldsymbol{x}}(\varepsilon p_1(\rho^{{n+1}})) \\
    &\hspace{3cm} -\Delta t
    \nabla_{\boldsymbol{x}}^2:\left(\frac{\boldsymbol{q}^{n}\otimes \boldsymbol{q}^{n}}{\rho^{n}}\right)-\Delta t \Delta_{\boldsymbol{x}} (\varepsilon p_0(\rho^{{n}})) = 0,
  \end{split}\label{eq:4}\\
     & \Delta_{\boldsymbol{x}}\varphi^{n+1}=\frac{1}{\Delta t}(\rho^{n+1}-\rho^{n}),\label{eq:5}\\
     &\Delta_{\boldsymbol{x}} P^{n+1}=-\nabla_{\boldsymbol{x}}^2: \left (\frac{\boldsymbol{q}^n\otimes \boldsymbol{q}^n }{\rho^n}\right)- \Delta_{\boldsymbol{x}}(\varepsilon p_0(\rho^{n})),\label{eq:6}\\
    &\frac{\boldsymbol{a}^{n+1}-\boldsymbol{a}^{n}}{\Delta t}+
    \nabla_{\boldsymbol{x}}\cdot \left(\frac{\boldsymbol{q}^n\otimes \boldsymbol{q}^n }{\rho^n}\right)+ \nabla_{\boldsymbol{x}} (\varepsilon
    p_0(\rho^{n})) + \nabla_{\boldsymbol{x}} P^{n+1}=0,\label{eq:7}\\
    &\boldsymbol{q}^{n+1} = \boldsymbol{a}^{n+1}- \nabla_{\boldsymbol{x}}\varphi^{n+1}.\label{eq:8}
   \end{align}
Indeed, the equation  \eqref{eq:4} for $p_1^{n+1}$ is derived from \eqref{eq:1} and \eqref{eq:2} similarly
as in the Direct method. The Laplace equation  \eqref{eq:5} for
$\varphi^{n+1}$ is the direct consequence of
applying the decomposition \eqref{eq:8} and $\nabla_{\boldsymbol{x}}
\cdot \boldsymbol{a}^{n+1}=0$ to the density equation \eqref{eq:1}. 
 The equation \eqref{eq:6} for $P^{n+1}$  and the equation \eqref{eq:7} for
 $\boldsymbol{a}^{n+1}$  are obtained by inserting
\eqref{eq:8} and $\nabla_{\boldsymbol{x}} \cdot
\boldsymbol{a}^{n+1}=\nabla_{\boldsymbol{x}} \cdot
\boldsymbol{a}^{n}=0$ into the momentum equation \eqref{eq:2}. Here a new unknown $P$ is introduced, which is  defined by
\begin{gather}
  P^{n+1}=\varepsilon p_1(\rho^{n+1})-\frac{\varphi^{n+1}-\varphi^{n}}{\Delta t},\label{eq:9}
\end{gather}
 since \eqref{eq:4}-\eqref{eq:5}  and \eqref{eq:7} imply \eqref{eq:2}.

The original equations \eqref{eq:1} and \eqref{eq:2} can also be recovered from
\eqref{eq:4}-\eqref{eq:8} by assuming $\nabla_{\boldsymbol{x}} \cdot \boldsymbol{a}^{n}=0$. In
fact, the equations for  $P^{n+1}$ and $\boldsymbol{a}^{n+1}$ (\eqref{eq:6}-\eqref{eq:7}) and $\nabla_{\boldsymbol{x}} \cdot \boldsymbol{a}^{n}=0$ imply
$\nabla_{\boldsymbol{x}} \cdot \boldsymbol{a}^{n+1}=0$. This leads to
the density equation \eqref{eq:1} from the $\varphi^{n+1}$ equation \eqref{eq:5}
and the decomposition \eqref{eq:8}. \eqref{eq:1} combined with the
$p_1^{n+1}$ equation \eqref{eq:4} will then allow us to recover the
momentum equation \eqref{eq:2}.

\paragraph{The boundary conditions} By solving the equations
\eqref{eq:4}-\eqref{eq:8} in sequential order, we can update the value
of $\rho,\boldsymbol{q}$. To do this, we need to provide boundary conditions for the
Laplace equations for $\varphi$ and $P$. The boundary conditions for
$P^{n+1}$ are somehow straightforward due to the implicit relation
\eqref{eq:9}, once $\varphi$ is known. And in solving $\varphi^{n+1}$, we may impose Dirichlet boundary condition on
$\varphi^{n+1}$ as follows:
\begin{gather}\label{eq:16}
  \varphi^{n+1}|_{\Omega}=0.
\end{gather}
Indeed, other non-homogeneous  Dirichlet boundary conditions can be
chosen. This  makes the unknown $\varphi^{n+1}$  determined up  to a linear
function in space. However, this uncertainty can be removed by
redefining $\boldsymbol{a}^{n+1}$. So we can always impose the homogeneous  Dirichlet boundary
conditions.

\paragraph{Simplification in the one dimensional case}
Observe that in the one dimensional case, $\nabla_{\boldsymbol{x}} \cdot a=0$ implies that $a$ is
independent of $x$. We may thus rewrite \eqref{eq:6} and \eqref{eq:7}
into a simpler equation by using \eqref{eq:9}
\begin{gather}
  a^{n+1}=a^{n}-\frac{\Delta t}{c-b}\left(
    \frac{(q^n )^2}{\rho^n}+ \varepsilon  p_0(\rho^{n})+\varepsilon p_1(\rho^{n+1})\right)\bigg|^c_b+\frac{1}{c-b}\left(\varphi^{n+1}|^c_b-\varphi^{n}|^c_b\right),\label{eq:20}
\end{gather}
where the space-domain is $[b,c]$ and $f|^c_b=f(c)-f(b)$. This
equation can be further simplified, since we impose the homogeneous  Dirichlet boundary
condition on $\varphi$ all the time. This lead to
\begin{gather}
  a^{n+1}=a^{n}-\frac{\Delta t}{c-b}\left(
    \frac{(q^n )^2}{\rho^n}+ \varepsilon  p_0(\rho^{n})+\varepsilon p_1(\rho^{n+1})\right)\bigg|^c_b.\label{eq:10}
\end{gather}
 With this reformulation, in the one dimensional case, we can
reduce the number of elliptic equations to be solved to one and update the space independent variable
$a$ more efficiently. Also as a consequence of \eqref{eq:16}, $a^n$ should be defined as the average of
$q^n$.
\begin{gather}
  a^n=\frac{1}{c-b}\int_b^c q^n dx.
\end{gather}
In summary, the Gauge method in the one dimensional case is implemented through equations
\eqref{eq:4}, \eqref{eq:5}, \eqref{eq:8} and \eqref{eq:10}. 

\subsection{Discussion of the AP property}

As for the AP property of the scheme, we give a formal
proof. To be precise, we want to show that the system \eqref{eq:3} and
\eqref{eq:2} becomes the incompressible Euler system as $\varepsilon\to
0$ in the congested region. 

However, in contrast with the  low Mach number limit of the isentropic Euler equation discussed in
\cite{2009_Degond_LowMach} or \cite{2007_Low_mach_D_SJ_JGL}, the
singularity in our model is embedded in the definition of $p$. New
congestion regions may arise from non-congested ones. And by the
analysis in section \ref{sec:one-dimens-riem}, there is the
possibility that $\rho^{\varepsilon}\to \rho^*$ but the limit of the
pressure $\varepsilon p_1(\rho^{\varepsilon})\to 0$ as $\varepsilon\to 0$. This means that
although the congestion density is reached, there is no real congestion in this region. So it seems better to characterize the congestion
region by
defining $\bar{p}=\lim_{\varepsilon\to 0}\varepsilon
p_1(\rho^{\varepsilon})>0$.  

Currently, we can only show that in regions
where both $\bar{p}^{n+1}=\lim_{\varepsilon\to 0}\varepsilon
p_1((\rho^{n+1})^{\varepsilon})>0$ and $\bar{p}^{n}=\lim_{\varepsilon\to 0}\varepsilon
p_1((\rho^{n})^{\varepsilon})>0$,  \eqref{eq:3} and
\eqref{eq:22} tend to the incompressible Euler system as $\varepsilon\to
0$. The assumption $\bar{p}^{n+1}=\lim_{\varepsilon\to 0}\varepsilon
p_1((\rho^{n+1})^{\varepsilon})>0$ is somehow essential given that
declustering wave  may appear in our model, since the pressure $\bar{p}$ can change from positive
value to $0$ instantaneously due to a declustering wave. The assumption $\bar{p}^{n}=\lim_{\varepsilon\to 0}\varepsilon
p_1((\rho^{n})^{\varepsilon})>0$ is also needed in case of the appearance
of new
congestion regions. In regions where $\bar{p}^{n+1}>0$ and $\bar{p}^{n}>0$,  we have
naturally that $(\rho^{n+1})^{\varepsilon}\to \rho^*$ and
$(\rho^{n})^{\varepsilon}\to \rho^*$ as $\varepsilon\to 0$.  Taking the
divergence of \eqref{eq:22} and using \eqref{eq:3}, we can indeed
recover \eqref{eq:1}, which leads to the
incompressibility of $(\boldsymbol{q}^{n+1})^{\varepsilon}$: $\nabla_{\boldsymbol{x}}\cdot(
\boldsymbol{q}^{n+1})^{n+1}=0$. Here the  implicitness in \eqref{eq:1} is crucial.
Then \eqref{Eq:Incomp_constraint} follows. Although we can only prove
the AP property inside a congestion region, the numerical solutions
provide evidence that the scheme is globally AP, including at
transition between compressible and incompressible region.

As for the Gauge method, it is also AP since it is a direct consequence of
\eqref{eq:1} and \eqref{eq:2}.

In the numerical simulations, we will check the AP property for concrete
test cases.

\section{ Full time and space
  discretization}
In this section, we present the one dimensional full time and space
  discretization. The two dimensional discretization is the easy extension of the one
dimensional case. For the sake of completeness, we include it in the appendix.
\label{sec:full-time-space}
\paragraph{The Direct method: } In the following, we consider the domain $[b,c]=[0,1]$. Let the uniform
spatial mesh be $\Delta x=\frac{1}{M}$, where $M$ is a positive
integer. Denote by $U_j^{n+1}=(\rho_j^{n},q_j^{n})^T$ the approximations of
$U=(\rho,q)^T$ at time  $t^n=n\Delta t$ and positions $x_j=j\Delta x$, for $j = 0,1,\ldots,M$. We fully
discretize the scheme \eqref{eq:1} and \eqref{eq:2} in the spirit of a
local Lax-Friedrichs (or  Rusanov) method \cite{leveque_book_2002}  as follows:

 \begin{gather}
   \begin{split}
     \frac{\rho^{n+1}_{j} - \rho^{n}_{j}}{\Delta t} + \frac{1}{\Delta
       x}\bigg[ & Q_{j+1/2}(U_{j}^{n},U_{j+1}^{n},U_{j}^{n+1},U_{j+1}^{n+1})
       \\
&-
       Q_{j-1/2}(U_{j-1}^{n},U_{j}^{n},U_{j-1}^{n+1},U_{j}^{n+1})\bigg]
     = 0,
   \end{split}
\label{Eq:rho_discrete}\\
\begin{split}
  \frac{q^{n+1}_{j} - q^{n}_{j}}{\Delta t} &+ \frac{1}{\Delta x}\left[F_{j+1/2}(U_{j}^{n},U_{j+1}^{n}) - F_{j-1/2}(U_{j-1}^{n},U_{j}^{n})\right]\\
  & + \frac{1}{2\Delta x}\left[\varepsilon
    p_1(\rho_{j+1}^{n+1}) - \varepsilon p_1(\rho_{j-1}^{n+1})\right] = 0.
\end{split}
\label{Eq:q_discrete}
\end{gather}
where the fluxes are given by:
\begin{align}
&Q_{j+1/2}^{n+1/2} = \frac{1}{2}\left[q_{j}^{n+1} + q_{j+1}^{n+1}\right] - \frac{1}{2}C_{j+1/2}^{n}(\rho_{j+1}^{n}  - \rho_{j}^{n}),\label{eq:23}\\
&F_{j+1/2}^n =
\frac{1}{2}\left[\frac{(q_{j}^{n})^{2}}{\rho_{j}^{n}} +
  \frac{(q_{j+1}^{n})^{2}}{\rho_{j+1}^{n}}+
  \varepsilon p_0(\rho_{j+1}^n)+ \varepsilon p_0(\rho_{j}^n)\right]- \frac{1}{2}C_{j+1/2}^{n}(q_{j+1}^{n}  - q_{j}^{n}).\label{eq:24}
\end{align}
They consist of the sum of a centered flux, implicit in \eqref{eq:23}
, explicit in \eqref{eq:24} and  biased terms introducing
diffusion. The quantity $C_{j+1/2}^{n}$ is the local diffusion
coefficient and is given by:
\begin{align}
 & C_{j+1/2}^{n} =
\max\left\{\left|\frac{q_{j}^{n}}{\rho_{j}^{n}}\right|+\sqrt{ \varepsilon p_0'(\rho_{j}^{n})},\left|\frac{q_{j+1}^{n}}{\rho_{j+1}^{n}}\right|+\sqrt{ \varepsilon p_0'(\rho_{j+1}^{n})}\right\}. 
\end{align}
It is defined as the local maximal characteristic
speed related to the explicit pressure $p_0$. Therefore, it remains
bounded as $\varepsilon$ goes to zero. The quantity $C_{j+1/2}^{n}$
provides a numerical viscosity which is needed for scheme
stability. We note that only the central discretization part of the
flux is taken implicit, while the numerical viscosity part is kept
explicit. In the momentum flux, only the part of the flux which
relates to pressure is taken implicit.

Based on this  discretization, we can apply the same strategy as
described in section \ref{sec:direct-method} to get an elliptic equation in $\rho$
by substituting \eqref{Eq:q_discrete} into \eqref{Eq:rho_discrete}:
\begin{gather}
  \begin{split}
    \frac{\rho^{n+1}_{j} - \rho^{n}_{j}}{\Delta t} &+ \frac{q_{j+1}^{n} - q_{j-1}^{n}}{2\Delta x}- \frac{\Delta t}{4\Delta x^{2}}\left[\varepsilon p_1(\rho_{j+2}^{n+1}) -2 \varepsilon p_1(\rho_{j}^{n+1}) + \varepsilon p_1(\rho_{j-2}^{n+1})\right]\\
    &- \frac{1}{2\Delta x}\left[C_{j+1/2}(\rho_{j+1}^{n} - \rho_{j}^{n}) - C_{j-1/2}(\rho_{j}^{n} - \rho_{j-1}^{n})\right]\\
    &- \frac{\Delta t}{2\Delta x^2}\left[F_{j+3/2}^{n} - F_{j+1/2}^{n}
      - F_{j-1/2}^{n} + F_{j-3/2}^{n}\right] = 0.
  \end{split}
\end{gather}
This equation is consistent with  equation \eqref{eq:3} of the Direct method. Then we get a
nonlinear equation for $p_1$:
\begin{gather}
   \begin{split}
    \rho((p_1)^{n+1}_{j})&  - \frac{\Delta t^2}{4\Delta x^{2}}\left[\varepsilon (p_1)_{j+2}^{n+1} -2 \varepsilon (p_1)_{j}^{n+1} + \varepsilon (p_1)_{j-2}^{n+1}\right]\\
   = \ \rho^{n}_{j}& -\frac{\Delta t}{2\Delta x}(q_{j+1}^{n} - q_{j-1}^{n})+\frac{\Delta t}{2\Delta x}\left[C_{j+1/2}(\rho_{j+1}^{n} - \rho_{j}^{n}) - C_{j-1/2}(\rho_{j}^{n} - \rho_{j-1}^{n})\right]\\
    &+ \frac{\Delta t^2}{2\Delta x^2}\left[F_{j+3/2}^{n} - F_{j+1/2}^{n}
      - F_{j-1/2}^{n} + F_{j-3/2}^{n}\right].
  \end{split}\label{eq:13}
\end{gather}
As mentioned before, we will use Newton iterations to solve this
nonlinear equation and  get $p_1^{n+1}$. The density $\rho^{n+1}$ is
then obtained by inverting the nonlinear function $p_1=p_1(\rho)$ with
another Newton iteration. Once $\rho^{n+1}$
is solved, $q^{n+1}$ can be obtained by
\begin{equation}
q^{n+1}_{j} = \Phi(U_{j-1}^{n},U_{j}^{n},U_{j+1}^{n})  -\frac{\Delta t}{2\Delta x}\left[\varepsilon p_1(\rho_{j+1}^{n+1}) - \varepsilon p_1(\rho_{j-1}^{n+1})\right],\label{Eq:q_discrete_2}
\end{equation}
with
\begin{equation}
\Phi(U_{j-1}^{n},U_{j}^{n},U_{j+1}^{n}) = q^{n}_{j} - \frac{\Delta t}{\Delta x}\left[F_{j+1/2}^n - F_{j-1/2}^n\right].
\end{equation}

\paragraph{The Gauge method} Also based on \eqref{Eq:rho_discrete} and \eqref{Eq:q_discrete}, we
can have the full time-space discretization of  the Gauge method. Indeed,
for the Gauge method, we need to discretize  \eqref{eq:13}. This leads
to
\begin{align}
    &\frac{1}{4\Delta x^{2}}\left[\varphi^{n+1}_{j+2} -2
      \varphi^{n+1}_{j} + \varphi^{n+1}_{j-2}\right] =\nonumber\\
    &\quad\quad\quad \frac{1}{\Delta t}(\rho^{n+1}_{j}-\rho^{n}_{j}) - \frac{1}{2\Delta
      x}\left[C_{j+1/2}(\rho_{j+1}^{n} - \rho_{j}^{n}) -
      C_{j-1/2}(\rho_{j}^{n} - \rho_{j-1}^{n})\right],\label{eq:21}\\
  &a^{n+1}= a^{n}-{\Delta t}\left( \frac{(q^n\otimes q^n )}{\rho^n}+ \varepsilon
    p_0(\rho^{n})+\varepsilon
    p_1(\rho^{n+1})\right)\bigg|^1_0\nonumber\\
  &\quad\quad\quad + \frac{\Delta t}{2}\sum_{1}^{M}\left[C_{j+1/2}(q_{j+1}^{n} - q_{j}^{n}) -
    C_{j-1/2}(q_{j}^{n} - q_{j-1}^{n})\right],\label{eq:14}\\
  &q^{n+1}_j = a^{n+1}-\frac{1}{2\Delta x}(\varphi^{n+1}_{j+1}-\varphi^{n+1}_{j-1}).\label{eq:12}
\end{align}

The above equations are  the direct consequences of
\eqref{eq:4}-\eqref{eq:5}, \eqref{eq:8} and \eqref{eq:10}. However, in the
numerical simulation, we will mainly test the schemes with
\eqref{eq:21} replaced by
   \begin{gather}
     \begin{split}
       \frac{1}{\Delta x^{2}}\left[\varphi^{n+1}_{j+1} -2
         \varphi^{n+1}_{j} + \varphi^{n+1}_{j-1}\right]
       =&\ \frac{1}{\Delta
         t}(\rho^{n+1}_{j}-\rho^{n}_{j})\\
       &- \frac{1}{2\Delta x}\left[C_{j+1/2}(\rho_{j+1}^{n} -
         \rho_{j}^{n}) - C_{j-1/2}(\rho_{j}^{n} -
         \rho_{j-1}^{n})\right],
     \end{split}\label{eq:11}
   \end{gather}
which may be justified as being the direct discretization of \eqref{eq:5}
with some numerical  viscosity added to the right hand side.
The stencils are different in two cases. We call the Gauge method with
stencil \eqref{eq:21} \textit{Gauge 2} method and the one with
stencil \eqref{eq:11}\textit{ Gauge 1} method, since they use grids
$\varphi_{j\pm 2}$  and $\varphi_{j\pm 1}$ respectively in addition to
$\varphi_j$. There is a big
difference in performance between the two discretizations. In fact, we will see in
the next section that the Gauge 2 method ( with \eqref{eq:21}) yields almost the same
numerical result as the Direct method, while the Gauge 1 method (with
\eqref{eq:11}) performs quite differently from the Direct method. This
may be partially due to the fact that the Gauge 1 method introduces more
diffusion than the Gauge 2 method, which can be seen by inserting the
Taylor expansion of $\varphi_{j\pm 1}^{n+1},\varphi_{j\pm 2}^{n+1}$ around
$x=j\Delta x$ into the discretizations:
\begin{gather*}
  \begin{split}
  &  \left(-\frac{1}{\Delta x^{2}}\left[\varphi^{n+1}_{j+1} -2
      \varphi^{n+1}_{j} + \varphi^{n+1}_{j-1}\right]\right)-\left(- \frac{1}{4\Delta
      x^{2}}\left[\varphi^{n+1}_{j+2} -2 \varphi^{n+1}_{j} +
      \varphi^{n+1}_{j-2}\right]\right)\\
&\quad\quad = \frac{1}{4}\frac{d^{4} }{d x^{4}}\varphi^{n+1}(j\Delta x)\Delta x^2+O(\Delta x^4).
  \end{split}
\end{gather*}

\paragraph{Numerical diffusion}
An important issue about the scheme is the numerical diffusion. From
the  equation \eqref{eq:13}, it can be seen that the diffusion for $\rho$
is of the order of
\begin{gather}
   \left(\frac{1}{2}(|u^n|+\sqrt{ \varepsilon p_0'(\rho^n)})\Delta x+\Delta t \varepsilon p_1'(\rho^n)\right)\Delta_{\boldsymbol{x}}\rho^n+\Delta t \Delta_{\boldsymbol{x}}
  (\rho^nu^n\otimes u^n).
\end{gather}
And similarly, by inserting \eqref{Eq:rho_discrete} into the pressure $p_1$
term in \eqref{Eq:q_discrete} we can see that the diffusion for $q$ is of the order of
\begin{gather}
  \left(\frac{1}{2}(|u^n|+\sqrt{ \varepsilon p_0'(\rho^n)})\right)\Delta
  x\Delta_{{x}}{q}^n+\Delta t\varepsilon p_1'(\rho^n)\Delta_{{x}}{q}^n.
\end{gather}
To damp out the oscillations in the
mass and momentum equations, the required numerical diffusion is \cite{leveque_book_2002}
\begin{gather}
  \left(\frac{1}{2}(|u^n|+\sqrt{ \varepsilon p'(\rho^n)})\right)\Delta
  x\Delta_{{x}}\rho^n, \left(\frac{1}{2}(|u^n|+\sqrt{ \varepsilon p'(\rho^n)})\right)\Delta
  x\Delta_{{x}}{q}^n,
\end{gather}
respectively. To ensure that this numerical diffusion is achieved, we
need the condition
\begin{gather}
  \left(\frac{1}{2}(|u^n|+\sqrt{ \varepsilon p_0'(\rho^n)})\Delta x+\varepsilon p_1'(\rho^n)\Delta t \right)\geq \left(\frac{1}{2}(|u^n|+\sqrt{ \varepsilon p'(\rho^n)})\right)\Delta
  x,
\end{gather}
which leads to
\begin{gather}
  \frac{\Delta t}{\Delta x}\geq \frac{1}{2\left(\sqrt{ \varepsilon p_0'(\rho^n)}+\sqrt{ \varepsilon p'(\rho^n)}\right)}.
\end{gather}
This condition is automatically satisfied in the congested region
($\rho\to \rho^*$) for small $\varepsilon$, since $\varepsilon
p'(\rho^n)\to \infty$ as $\varepsilon \to 0$. However, it contradicts the
CFL condition in the non-congested region for small $\varepsilon$,  since $\varepsilon
p'(\rho^n)$ and $\varepsilon
p_0'(\rho^n) \to 0$ as $\varepsilon \to 0$. From this analysis, there
should be no oscillations in the congestion region while the numerical
diffusion may not be sufficient in  the non-congested region. However, 
numerical simulation seems to  indicate that the numerical viscosity
in this scheme is sufficient to damp out the oscillations in the
non-congested region. 

   \section{Numerical results}
\label{sec:Num_result}

   \subsection{One dimensional test cases}\label{sec:1d-case-num}
      In this section, we use several numerical examples to test the
   performance of the schemes. Corresponding to different situations,
   four examples are tested. All these examples are the compositions
   of Riemann problems. Since the exact solutions to Riemann problem can be determined as
   in section \ref{sec:euler-with-cong}, we can compare the exact and numerical
   solutions. Different measurements of the relative errors will be
   applied to test the performance of our schemes. And the numerical Courant number is computed.

In the following, we choose $\gamma=2$ and the maximal density $\rho^*=1$. The test problems are:
\begin{align}
&(P1): (\rho,q)(x,0)=
    \begin{cases}
      (0.7,0.8), & x\in [0,0.5),\\
 (0.7,-0.8), &  x\in (0.5,1],
    \end{cases}\\
 &(P2): (\rho,q)(x,0)=
    \begin{cases}
      (0.7,-0.8), & x\in [0,0.5),\\
  (0.7,0.8), & x\in (0.5,1],
    \end{cases}\\ 
&(P3): (\rho,q)(x,0)=
    \begin{cases}
      (0.7,0.8), & x\in [0,0.25),\\
 (0.8,-0.3), & x\in (0.25,0.75),\\
 (0.7,-1.2), & x\in (0.75,1],
    \end{cases}\\
&(P4):
(\rho,q)(x,0)=
    \begin{cases}
      (0.8,0.3), & x\in [0,0.5),\\
 (0.5,0.1), &  x\in (0.5,1],
    \end{cases}
  \end{align}
The  first example $(P1)$ illustrates how  the AP schemes
capture shocks near congestion.  The second example $(P2)$ shows how
the AP schemes work near vacuum. The third example $(P3)$ simulates
the interaction of two shocks near congestion. The last example $(P4)$
shows
some problems in the Gauge 1 method and will be used to justify the
splitting of $p$.

\textbf{Example 1. } The solution to the Riemann problem $(P1)$
consists of two shocks propagating in the  opposite directions. The density  of the  intermediate state
is close to the maximal density. In the following, we will test the
two methods described in section \ref{sec:full-time-space} with
different parameters $\varepsilon$ and different mesh sizes $\Delta x,
\Delta t$.

\begin{enumerate}
\item First, we choose $\varepsilon=10^{-4}, \Delta x=5\times 10^{-3}, \Delta
  t=5\times 10^{-4}$. We will compare the performances of the two methods proposed
  in section \ref{sec:full-time-space}.
\begin{figure}
  \centering
  \includegraphics[scale=0.7]{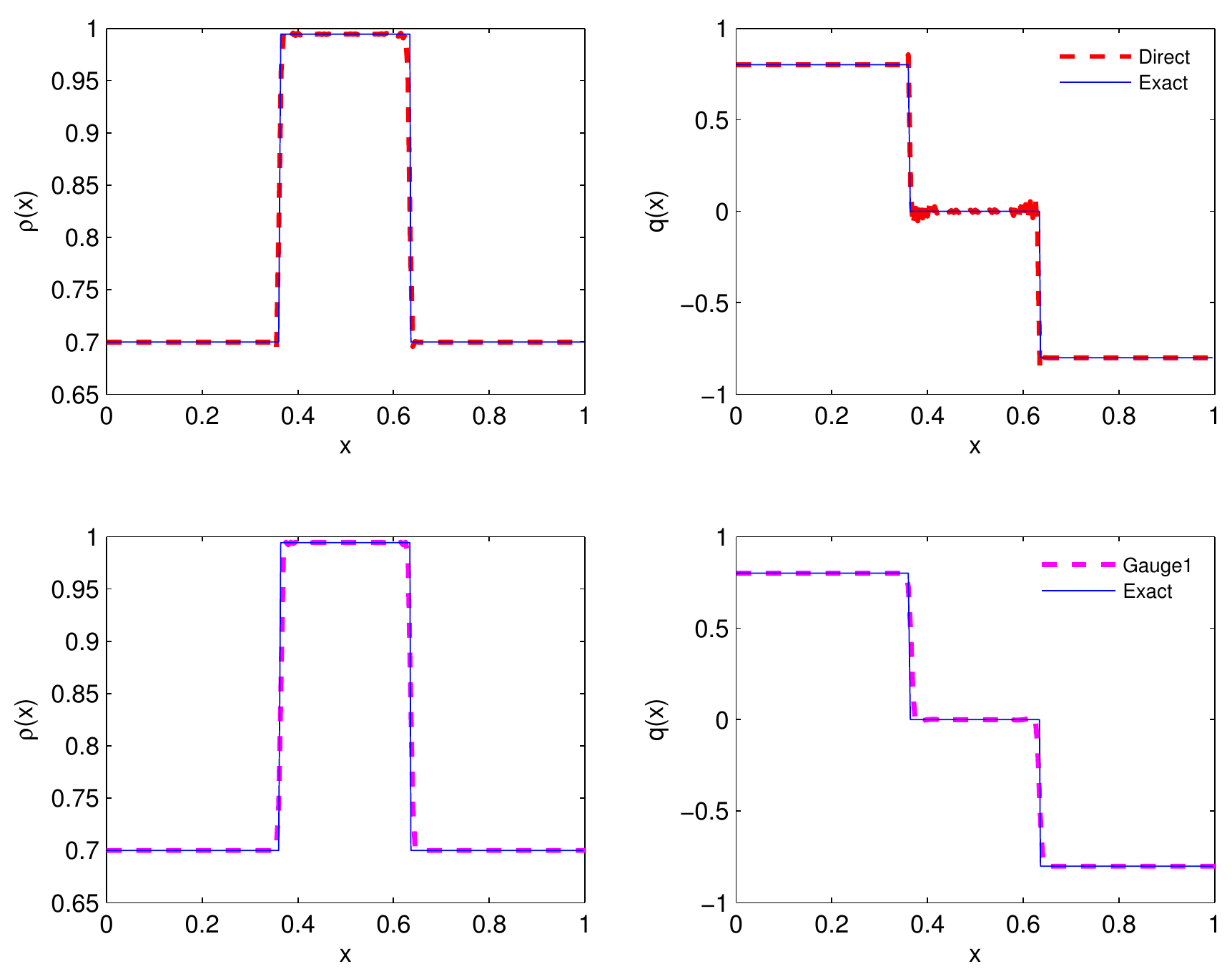}
\caption{The direct and  Gauge 1
  methods for problem (P1) at $t=0.05$ with $\varepsilon=10^{-4}, \Delta x=5\times 10^{-3}, \Delta
  t=5\times 10^{-4}$. The solid
  lines are the exact solutions. The dashed curves are the
  numerical solutions. The left graphs are for $\rho$, the right ones
  for $q$, both as functions of $x$. }
\label{fig:1}
 \end{figure}
It can be seen from Figure \ref{fig:1} that there is large oscillations of the momentum in the
congested region. But the propagation of the  shock is captured well. In comparison,
the Gauge 1 method as illustrated in Figure \ref{fig:1} eliminates all
the oscillation. 

\item We  look how the choices of the parameters $\varepsilon$ 
  $\Delta x$ and $ \Delta t$ affect the numerical result. We may fix
  $\Delta x, \Delta t$ but change the value of $\varepsilon$ so as to
  test the cases $\varepsilon<\Delta t$ and $\varepsilon>\Delta t$.
\begin{figure}
  \centering
\subfigure[$\varepsilon=10^{-2}$]{\label{fig:3:a} \includegraphics[scale=0.7]{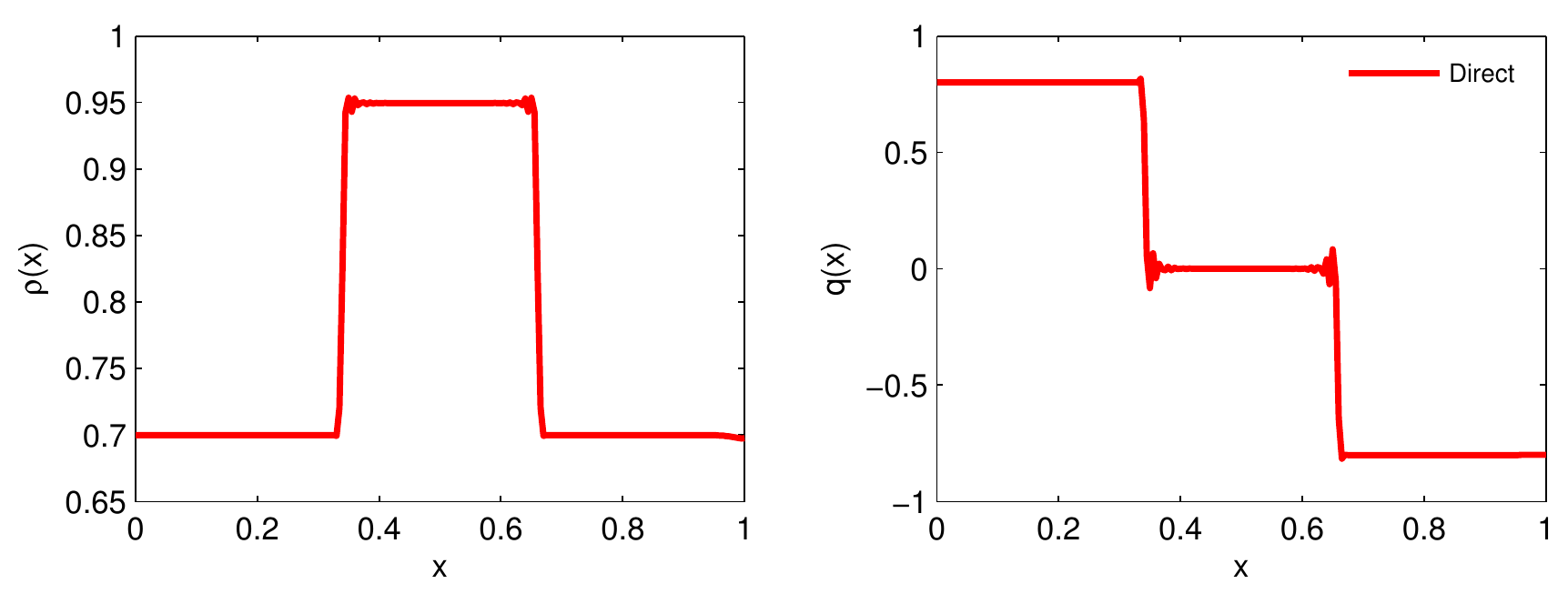}}\\
\subfigure[$\varepsilon=10^{-8}$]{\label{fig:3:b} \includegraphics[scale=0.7]{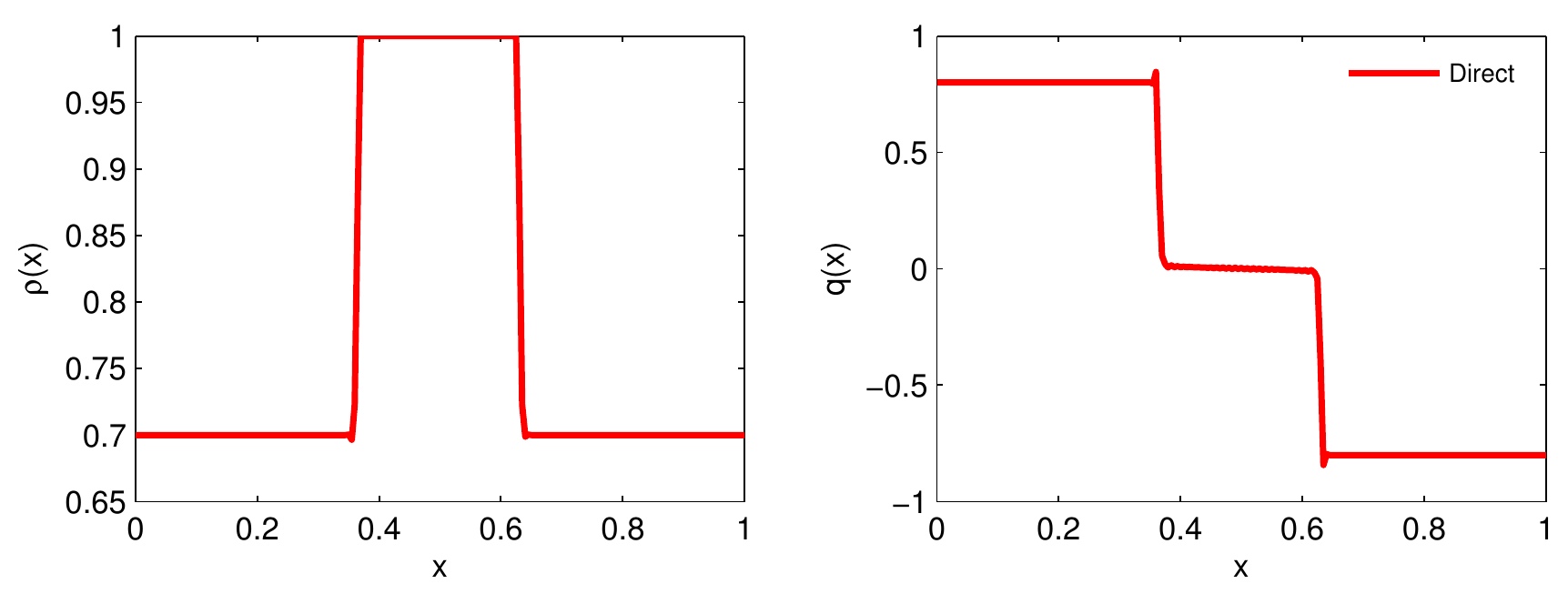}}
\caption{Fix $ \Delta x=5\times 10^{-3}, \Delta
  t=5\times 10^{-4}$. The numerical results of the Direct method for
  problem (P1) at $t=0.05$
  with $\varepsilon=10^{-2}$ and $\varepsilon=10^{-8}$. The left graphs are for $\rho$, the right ones
  for $q$, both as functions of $x$.}
 \label{fig:3}
 \end{figure}
From the numerical results, it can be seen that the oscillations in the
momentum always appear for different choices of $\varepsilon$ but are
smaller as $\varepsilon\to 0$ for this choice of parameter. This verifies the AP property. As for the Gauge 1
method, it has the same performance for all value of $\varepsilon$. Thus it
shares the same property.

\item The above observation can be quantitatively investigated by
  measuring the difference between  the numerical solution $W$
  and the theoretical one $w$. We  use two measurements: one is  the relative error of the numerical solution $W$
  compared with  $w$ in the sense of $L^1$ norm and the other is 
  the difference of their total variation:
 \begin{gather}
    e(W)=\frac{\norm{W-w}_{L^1}}{\norm{w}_{L^1}}, \text{ where
    }\norm{w}_{L^1}=\frac{1}{M}\sum_j\abs{w_j},\\
g(W)=\frac{\abs{\tv\{W\}-\tv\{w\}}}{\tv\{w\}}, \text{ where
    }\tv\{w\}=\sum_j\abs{w_{j+1}-w_j}.
  \end{gather}

\begin{table}
    \centering
    \begin{tabular}{ccccccc}
\toprule
\multicolumn{3}{c}{Parameters} &\multicolumn{2}{c}{Direct}&\multicolumn{2}{c}{Gauge 1}\\
\cmidrule(lr{.5em}){1-3}\cmidrule(lr{.5em}){4-5}\cmidrule(lr{.5em}){6-7}
      $\varepsilon$ & $\Delta x$ & $\Delta t$ & $e(W)$  &ratio & $e(W)$   &ratio    \\
\midrule
\multirow{10}{*}{$10^{-4}$}&  $1/200$ & $1/250$& $1.1012\times 10^{-2}$&-& $1.1209\times 10^{-2}$&-\\
&  $1/200$ & $1/500$& $8.0103\times 10^{-3}$& 1.3747&
$7.8670\times 10^{-3}$&1.4248\\
&  $1/200$ & $1/1000$& $4.2631\times 10^{-3}$&1.8790&
$4.9486\times 10^{-3}$&1.5897\\
&  $1/200$ & $1/2000$& $5.1528\times 10^{-3}$&0.8273&
$5.4107\times 10^{-3}$&0.9146\\
&  $1/200$ & $1/10000$& $5.6843\times 10^{-3}$&0.9065&
$1.5761\times 10^{-2}$&0.3433\\
\cmidrule(lr{.5em}){2-7}
&  $1/200$ & $1/1000$& $4.2631\times 10^{-3}$&-&
$4.9486\times 10^{-3}$&-\\
&  $1/400$ & $1/2000$& $3.5612\times 10^{-3}$&1.1971&
$3.1706\times 10^{-3}$&1.5608\\
&  $1/800$ & $1/4000$& $1.3085\times 10^{-3}$&2.7216&
$1.4253\times 10^{-3}$&2.2245\\
&  $1/1600$ & $1/8000$& $5.7676\times 10^{-4}$&2.2687&
$6.1850\times 10^{-4}$&2.3044\\
&  $1/1600$ & $1/16000$& $7.2302\times 10^{-4}$&0.79771&
$7.0713\times 10^{-4}$&0.87466\\
\midrule
\multirow{10}{*}{$10^{-2}$}&  $1/200$ & $1/250$& $1.3188\times 10^{-2}$& -&$1.3721\times 10^{-2}$&-\\
&  $1/200$ & $1/500$& $7.7490\times 10^{-3}$&1.7019&
$8.5433\times 10^{-3}$&1.6061\\
&  $1/200$ & $1/1000$& $7.6793\times 10^{-3}$&1.0091&
$7.8588\times 10^{-3}$&1.0871\\
&  $1/200$ & $1/2000$& $8.2751\times 10^{-3}$&0.9280&
$7.5447\times 10^{-3}$&1.0416\\
&  $1/200$ & $1/10000$& $1.0758\times 10^{-2}$&0.7692&
$1.5761\times 10^{-2}$&0.4787\\
\cmidrule(lr{.5em}){2-7}
&  $1/200$ & $1/1000$& $7.6793\times 10^{-3}$&-&
$7.8588\times 10^{-3}$&-\\
&  $1/400$ & $1/2000$& $3.7602\times 10^{-3}$&2.0423&
$3.8637\times 10^{-3}$&2.0340\\
&  $1/800$ & $1/4000$& $1.7934\times 10^{-3}$&2.0967&
$1.8646\times 10^{-3}$&2.0721\\
&  $1/1600$ & $1/8000$& $8.0483\times 10^{-4}$&2.2283&
$8.7074\times 10^{-4}$&2.1414\\
&  $1/1600$ & $1/16000$& $7.9308\times 10^{-4}$&1.0148&
$8.4647\times 10^{-4}$&1.0287\\
\bottomrule
    \end{tabular}
   \caption{Comparison of the $L^1$  relative error between the Direct and Gauge 1 methods at
      $t=0.025$. The 'ratio' column ratio provides comparisons of  the relative $L^1$
      norm error between  the previous and current rows, where
      either $\Delta x$ is fixed or $\Delta x/ \Delta t$ is fixed.}\label{tab:1}
  \end{table}
\begin{table}
    \centering
    \begin{tabular}{ccccccc}
\toprule
\multicolumn{3}{c}{Parameters} &\multicolumn{2}{c}{Direct}&\multicolumn{2}{c}{Gauge 1}\\
\cmidrule(lr{.5em}){1-3}\cmidrule(lr{.5em}){4-5}\cmidrule(lr{.5em}){6-7}
      $\varepsilon$ & $\Delta x$ & $\Delta t$  & $g(W)$ &ratio  & $g(W)$  &ratio    \\
\midrule
\multirow{10}{*}{$10^{-4}$}&  $1/200$ & $1/250$&  $7.4909\times 10^{-3}$&-&$7.2225\times 10^{-4}$&-\\
&  $1/200$ & $1/500$& $7.5636\times 10^{-2}$& 0.0990&
$6.7956\times 10^{-3}$&0.1063\\
&  $1/200$ & $1/1000$& $4.1433\times 10^{-1}$&0.1826&
$2.4506\times 10^{-2}$&0.2773\\
&  $1/200$ & $1/2000$& $8.5937\times 10^{-1}$&0.4821&
$1.4766\times 10^{-2}$&1.6596\\
&  $1/200$ & $1/10000$& 1.0603&0.8105&
$1.8995\times 10^{-2}$&0.7774\\
\cmidrule(lr{.5em}){2-7}
&  $1/200$ & $1/1000$& $4.1433\times 10^{-1}$&-&
$2.4506\times 10^{-2}$&-\\
&  $1/400$ & $1/2000$& $4.0126\times 10^{-1}$&1.0326&
$3.8585\times 10^{-2}$&0.6351\\
&  $1/800$ & $1/4000$& $2.5996\times 10^{-1}$&1.5435&
$2.4447\times 10^{-2}$&1.5783\\
&  $1/1600$ & $1/8000$& $5.4191\times 10^{-1}$&0.4797&
$2.3520\times 10^{-2}$&1.0394\\
&  $1/1600$ & $1/16000$& 1.0294&0.5264&
$1.3054\times 10^{-2}$&1.8017\\
\midrule
\multirow{10}{*}{$10^{-2}$}&  $1/200$ & $1/250$& $5.1377\times 10^{-5}$& -&$4.1271\times 10^{-4}$&-\\
&  $1/200$ & $1/500$& $5.2584\times 10^{-3}$&0.0098&
$2.1808\times 10^{-4}$&1.8925\\
&  $1/200$ & $1/1000$& $1.1354\times 10^{-1}$&0.0463&
$2.8711\times 10^{-3}$&0.0760\\
&  $1/200$ & $1/2000$& $4.9598\times 10^{-1}$&0.2289&
$9.9861\times 10^{-3}$&0.2875\\
&  $1/200$ & $1/10000$& 1.2766&0.3885&
$4.4336\times 10^{-3}$&2.2524\\
\cmidrule(lr{.5em}){2-7}
&  $1/200$ & $1/1000$& $1.1354\times 10^{-1}$&-&
$2.8711\times 10^{-3}$&-\\
&  $1/400$ & $1/2000$& $1.1856\times 10^{-1}$&0.9577&
$2.9617\times 10^{-3}$&0.9694\\
&  $1/800$ & $1/4000$& $1.2408\times 10^{-1}$&0.9555&
$2.9916\times 10^{-3}$&0.9900\\
&  $1/1600$ & $1/8000$& $1.2200\times 10^{-1}$&1.0170&
$2.7207\times 10^{-3}$&1.0996\\
&  $1/1600$ & $1/16000$& $3.9234\times 10^{-1}$&0.3110&
$1.3249\times 10^{-2}$&0.2054\\
\bottomrule
    \end{tabular}
    \caption{Comparison of the total variation relative error between the
       Direct and Gauge 1 methods at
      $t=0.025$. The 'ratio' column provides comparisons of the total variation relative error
      $g(W)$  between  the previous and current rows, where
      either $\Delta x$ is fixed or $\Delta x/ \Delta t$ is fixed.}\label{tab:4}
  \end{table}
In  Table \ref{tab:1} and ~\ref{tab:4}, the relative error in terms
of $L^1$ distance and total variation between the numerical and exact solutions
      at $t=0.025$ are computed. The Gauge 2 method yields quite similar
      result to the
      Direct method. So it is not listed in the table. It can be seen that the Direct method is usually better
than the Gauge 1 method in the $L^1$ norm. To reflect the observation
we made from looking at Figure \ref{fig:1}, we use the total variation norm, which
captures the oscillations better. For the total variation measurement,
it can be seen that the Gauge 1 method is always better in controlling the
oscillations. Another observation can be made from the tables is how
the accuracy is changed with different parameters.  For both two measurements, we test the accuracy with
$\Delta x$ fixed or $\Delta x/ \Delta t$ fixed. In the test where
$\Delta x$ is fixed,
it can also be seen that the relative error in $L^1$
norm can not be reduced much by refining the time mesh from $1/500$ to
$1/10000$ with fixed $\Delta x=1/200$. This feature is the same as the
standard hyperbolic solvers:  better accuracy can not be obtained by
using a smaller $\Delta t$ once the scheme is stable.  In the test
where $\Delta x/ \Delta t$ is fixed, we check how the relative error
is decreasing  with respect to $\Delta x$. It is interesting to see
that the error is not always decreasing. And since the convergence
order for explicit local Lax-Friedrichs scheme with discontinuities is
$\frac{1}{2}$ \cite{leveque_book_2002}, we may expect that the relative error in $L^1$
norm is reduced by $\sqrt{2}$ when the space mesh is refined
by 2 with $\Delta x/ \Delta t$  fixed. However, this is not the case.

\item We will also check the numerical Courant number in tables
  \ref{tab:2} and \ref{tab:3}. It can be seen that the CFL condition
  of our scheme is
  greatly improved from the one for standard hyperbolic
  solver $\Delta t=O(\varepsilon \Delta x)$.
  \begin{table}
    \centering
    \begin{tabular}{ccccc}
\toprule
      $\varepsilon$ & $\Delta x$& $\text{stable  }\Delta t$ & $\max\lambda$  & 
      $(\max\lambda)\frac{\Delta t}{\Delta x}$ \\
\midrule
$10^{-4}$&  $1/100$ & $1/100$& 1.2186& 1.2186\\
$10^{-4}$&  $1/200$ & $1/210$& 1.2669& 1.2066\\
$10^{-4}$&  $1/400$ & $1/430$& 1.1962& 1.1128\\
$10^{-4}$&  $1/800$ & $1/870$& 1.1962& 1.1000\\
$10^{-4}$&  $1/1600$ & $1/1730$& 1.1938& 1.1041\\
\midrule
$10^{-2}$&  $1/100$ & $1/100$& 1.7903& 1.4902\\
$10^{-2}$&  $1/200$ & $1/280$& 1.6571& 1.1836\\
$10^{-2}$&  $1/400$ & $1/580$& 1.6486& 1.1370\\
$10^{-2}$&  $1/800$ & $1/1170$& 1.6486& 1.1272\\
$10^{-2}$&  $1/1600$ & $1/2340$& 1.6645& 1.1381\\
\bottomrule
    \end{tabular}
    \caption{The numerical Courant number for the Direct method. $\max\lambda$
    is the maximal eigenvalue of the explicit part of the scheme for all time steps
    before the waves reach the boundary.}\label{tab:2}
  \end{table}

\begin{table}
    \centering
    \begin{tabular}{ccccc}
\toprule
      $\varepsilon$ & $\Delta x$& $\text{stable  }\Delta t$ & $\max\lambda$  & 
      $\max\lambda\frac{\Delta t}{\Delta x}$ \\
\midrule
$10^{-4}$&  $1/100$ & $1/100$& 1.2151& 1.2151\\
$10^{-4}$&  $1/200$ & $1/210$& 1.2279& 1.1695\\
$10^{-4}$&  $1/400$ & $1/500$& 1.3482& 1.0786\\
$10^{-4}$&  $1/800$ & $1/1340$& 1.4398& 0.8596\\
$10^{-4}$&  $1/1600$ & $1/3130$& 1.3555& 0.69293\\
\midrule
$10^{-2}$&  $1/100$ & $1/100$& 1.6480& 1.6480\\
$10^{-2}$&  $1/200$ & $1/260$& 1.6475& 1.2673\\
$10^{-2}$&  $1/400$ & $1/540$& 1.6475& 1.2204\\
$10^{-2}$&  $1/800$ & $1/1190$& 1.6530& 1.1112\\
$10^{-2}$&  $1/1600$ & $1/2520$& 1.6490& 1.0470\\
\bottomrule
    \end{tabular}
    \caption{The numerical Courant number for the Gauge 1 method. $\max\lambda$
    is the maximal eigenvalue of the explicit part of the scheme for all time steps
    before the waves reach the boundary.}\label{tab:3}
  \end{table}

\item We can also quantify the observation as in Figure
  \ref{fig:3}. In Table \ref{tab:6}, the relative errors of solutions
  in the $L^1$ norm or total variation are summarized.  The data for
  $\varepsilon=10^{-2},10^{-4}$ are the same as those in Table
  \ref{tab:1} and \ref{tab:4}. It can be seen that there is no big
  increase in the relative error  for different $\varepsilon$ with fixed
  $\Delta x,\Delta t$. As
  discussed in section \ref{sec:one-dimens-riem}, the theoretical
  solutions to system \eqref{Eq:rho_1D_eps} and \eqref{Eq:q_1D_eps} with
  positive   $\varepsilon$ are
  converging to the solutions to systems with $\varepsilon=0$ when
  $\varepsilon\to 0$. Thus, the numerical solutions tend to the
  theoretical solutions  systems with $\varepsilon=0$ for fixed
  $\Delta x,\Delta t$ as well. This versifies the AP property.
\begin{table}
    \centering
    \begin{tabular}{ccccccc}
\toprule
\multicolumn{3}{c}{Parameters} &\multicolumn{2}{c}{Direct}&\multicolumn{2}{c}{Gauge 1}\\
\cmidrule(lr{.5em}){1-3}\cmidrule(lr{.5em}){4-5}\cmidrule(lr{.5em}){6-7}
      $\varepsilon$ & $\Delta x$ & $\Delta t$ & $e(W)$  &$g(W)$ & $e(W)$   &$g(W)$    \\
\midrule
\multirow{4}{*}{$10^{-2}$}  
&  $1/200$ & $1/1000$& $7.6793\times 10^{-3}$&$1.1354\times 10^{-1}$&
$7.8588\times 10^{-3}$&$2.8711\times 10^{-3}$\\
&  $1/200$ & $1/2000$& $8.2751\times 10^{-3}$&$4.9598\times 10^{-1}$&
$7.5447\times 10^{-3}$&$9.9861\times 10^{-3}$\\
&  $1/800$ & $1/4000$& $1.7934\times 10^{-3}$&$1.2408\times 10^{-1}$&
$1.8646\times 10^{-3}$&$2.9916\times 10^{-3}$\\
&  $1/1600$ & $1/16000$& $7.9308\times 10^{-4}$&$3.9234\times 10^{-1}$&
$8.4647\times 10^{-4}$&$1.3249\times 10^{-2}$\\
\midrule
\multirow{4}{*}{$10^{-4}$}&    $1/200$ & $1/1000$& $4.2631\times 10^{-3}$&$4.1433\times 10^{-1}$&
$4.9486\times 10^{-3}$&$2.4506\times 10^{-2}$\\
&  $1/200$ & $1/2000$& $5.1528\times 10^{-3}$&$8.5937\times 10^{-1}$&
$5.4107\times 10^{-3}$&$1.4766\times 10^{-2}$\\
&  $1/800$ & $1/4000$& $1.3085\times 10^{-3}$&$2.5996\times 10^{-1}$&
$1.4253\times 10^{-3}$&$2.4447\times 10^{-2}$\\
&  $1/1600$ & $1/16000$& $7.2302\times 10^{-4}$&1.0294&
$7.0713\times 10^{-4}$&$1.3054\times 10^{-2}$\\
\midrule
\multirow{4}{*}{$10^{-8}$}&    $1/200$ & $1/1000$& $7.0600\times 10^{-3}$&$6.4521\times 10^{-1}$&
$5.8159\times 10^{-3}$&$1.4861\times 10^{-2}$\\
&  $1/200$ & $1/2000$& $5.8872\times 10^{-3}$&$6.4091\times 10^{-3}$&
$5.8872\times 10^{-3}$&$6.4091\times 10^{-3}$\\
&  $1/800$ & $1/4000$& $1.7611\times 10^{-3}$&$6.7218\times 10^{-1}$&
$1.4624\times 10^{-3}$&$1.5544\times 10^{-2}$\\
&  $1/1600$ & $1/16000$& $9.1296\times 10^{-4}$&$8.2410\times 10^{-1}$&
$8.0587\times 10^{-4}$&$1.6092\times 10^{-2}$\\
\bottomrule
    \end{tabular}
   \caption{ The  $L^1$  and  total variation relative error of     the  Direct and Gauge 1 methods at
      $t=0.025$ for different $\varepsilon$.}\label{tab:6}
  \end{table}

\item With a slightly variant version of this test case, we can also
  see the role played by the splitting of pressure
$p$. By making the momentum of the left and right states 10 times
smaller, we have another test case $(P1')$ sharing the similar behaviour of
$(P1)$.
\begin{gather}
(P1'): (\rho,q)(x,0)=
    \begin{cases}
      (0.7,0.08), & x\in [0,0.5),\\
 (0.7,-0.08), &  x\in (0.5,1],
    \end{cases}
  \end{gather}
  In Figure \ref{fig:10}, the numerical solutions obtained by the Direct method
with and without $p_0$ are compared. This confirms the observation
made in \cite{2009_Degond_LowMach} for the low Mach number limit.
\begin{figure}
  \centering
  \includegraphics[scale=0.7]{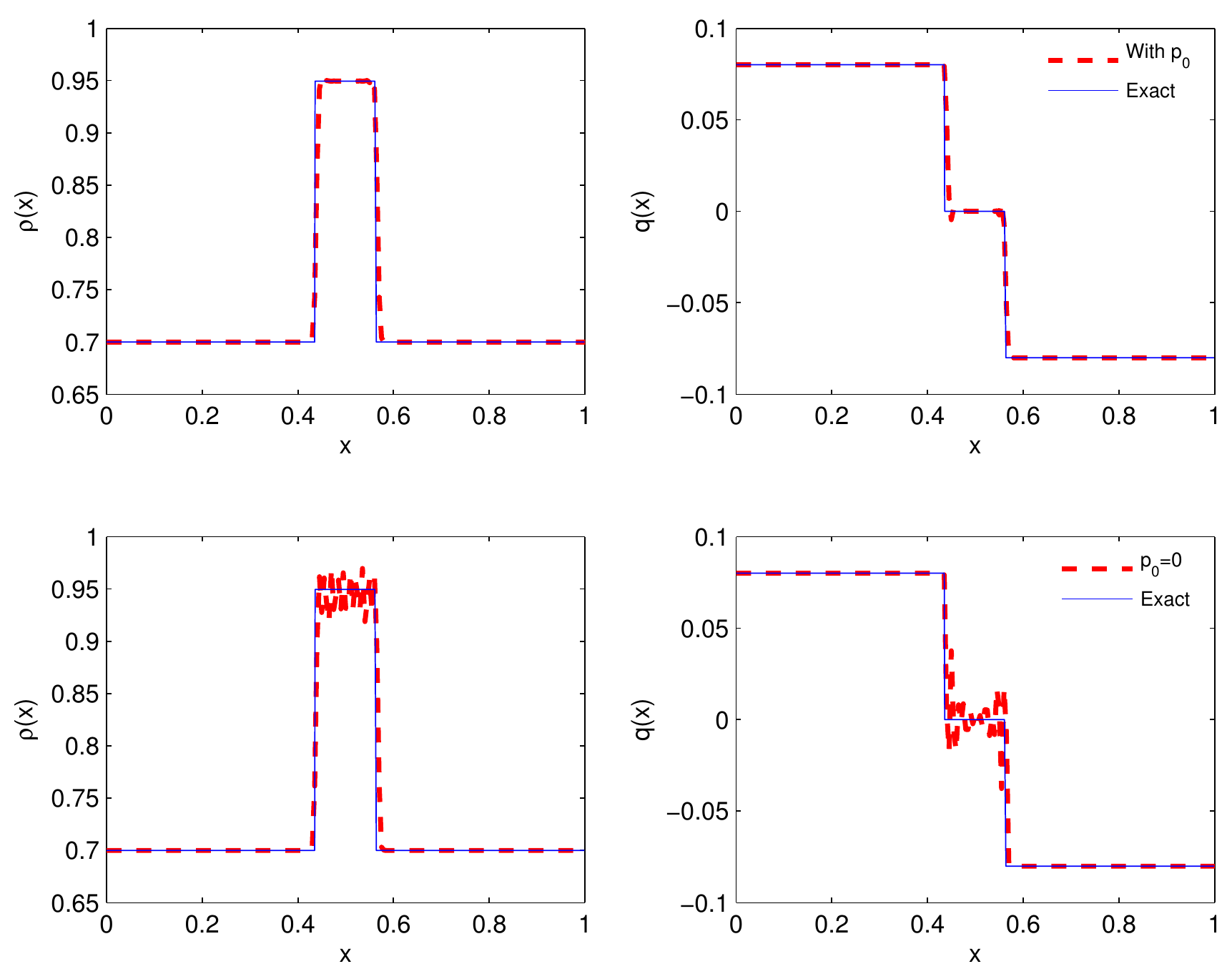}
\caption{The numerical results of the Direct method with and without
  $p_0$ for problem (P1') at $t=0.2$ with $\varepsilon=10^{-4}, \Delta x=5\times 10^{-3}, \Delta
  t=5\times 10^{-4}$. The solid
  lines are the exact solutions. The dashed curves are the
  numerical solutions. The left graphs are for $\rho$, the right ones
  for $q$, both as functions of $x$.}\label{fig:10}
 \end{figure}
It can be seen that  a lot of extra oscillations appear in the numerical
solutions when there is no splitting of pressure p ($p_0=0,p_1=p$). That is the
reason why we should add a $p_0$ term in the explicit part of the
numerical scheme.
\end{enumerate}

\textbf{Example 2. } 
The Riemann problem  $(P2)$ is obtained by exchanging the left and right
states of Riemann problem $(P1)$. So 
the solution to the problem $(P2)$
consists of two rarefaction
waves and a vacuum state appears as the intermediate state. As shown
in proposition  \ref{Prop:Riemann_problem_limit1}, these two rarefaction waves tend
to be contact waves. 

 As in example 1, We choose $\varepsilon=10^{-4}, \Delta x=5\times 10^{-3}, \Delta
  t=5\times 10^{-4}$.
  t=0.0005. 
\begin{figure}
  \centering
\includegraphics[scale=0.7]{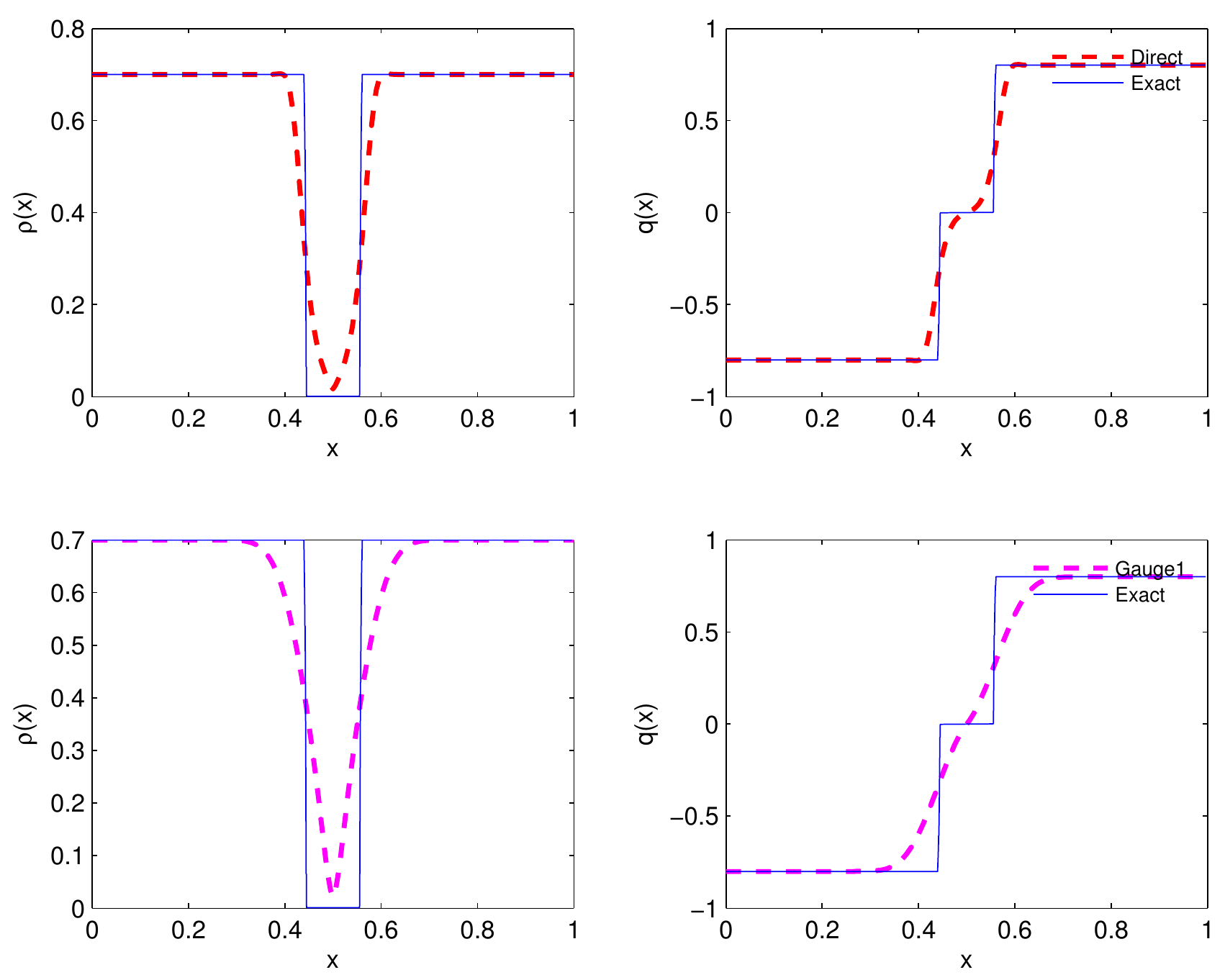}
\caption{The Direct and  Gauge 1
  methods for problem (P2) at $t=0.05$ with $\varepsilon=10^{-4}, \Delta x=5\times 10^{-3}, \Delta
  t=5\times 10^{-4}$. The solid
  lines are the exact solutions. The dashed curves are the
  numerical solutions. The left graphs are for $\rho$, the right ones
  for $q$, both as functions of $x$.}
\label{fig:4}
 \end{figure}
It can be seen from Figure \ref{fig:4} that the Direct method captures the vacuum and rarefaction
waves well. In comparison,
the Gauge 1 method as illustrated in Figure \ref{fig:4} shows larger diffusion.

\textbf{Example 3. } 
The solution to the problem $(P3)$
consists of two Riemann problems: both of them are like the Riemann
problem in (P1).  So there are two congested regions and eventually
they will collide. We are interested in observing how the numerical
methods behave at collision. We fix $\Delta
x=5\times 10^{-3}, \Delta
  t=5\times 10^{-4}$ and choose $\varepsilon= 10^{-4}$ and $10^{-8}$. Since
  only shocks are involved, we will use the Gauge 1 method only.

\begin{figure}
  \centering
\subfigure[$t=0.0790$]{\label{fig:5:a} \includegraphics[scale=0.7]{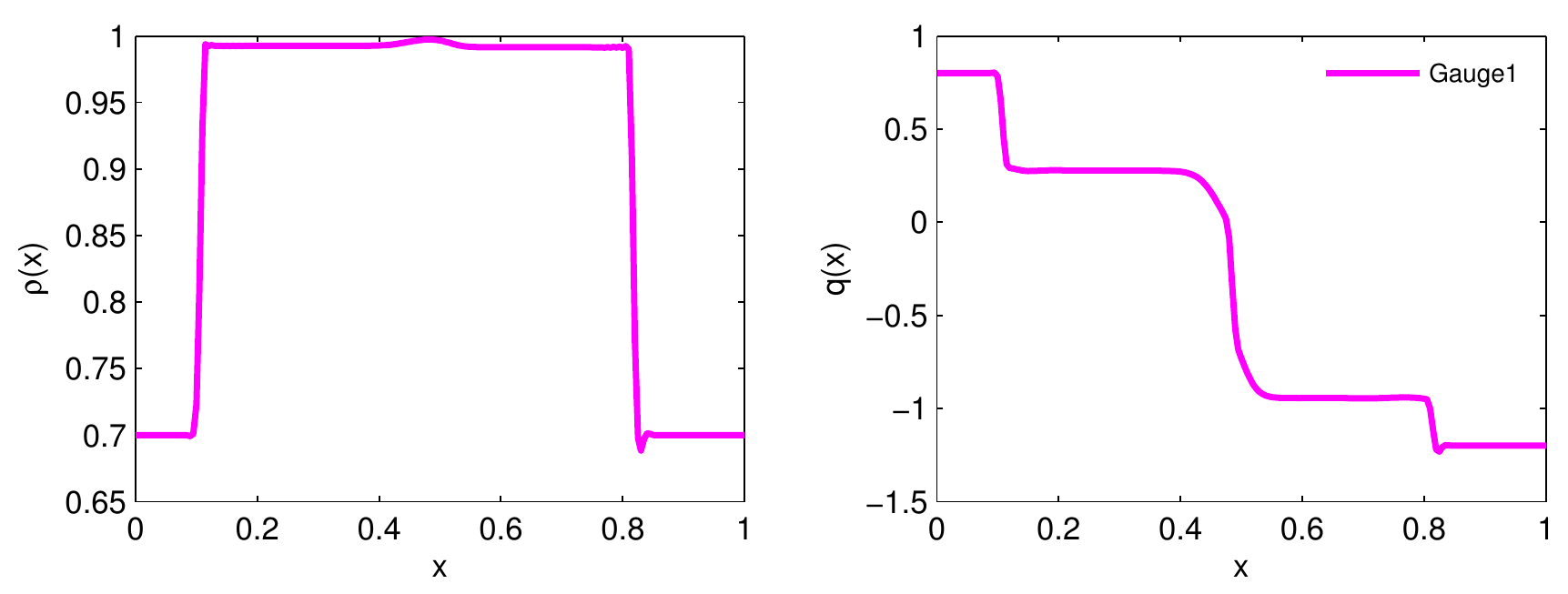}}\\
\subfigure[$t=0.1030$]{\label{fig:5:b} \includegraphics[scale=0.7]{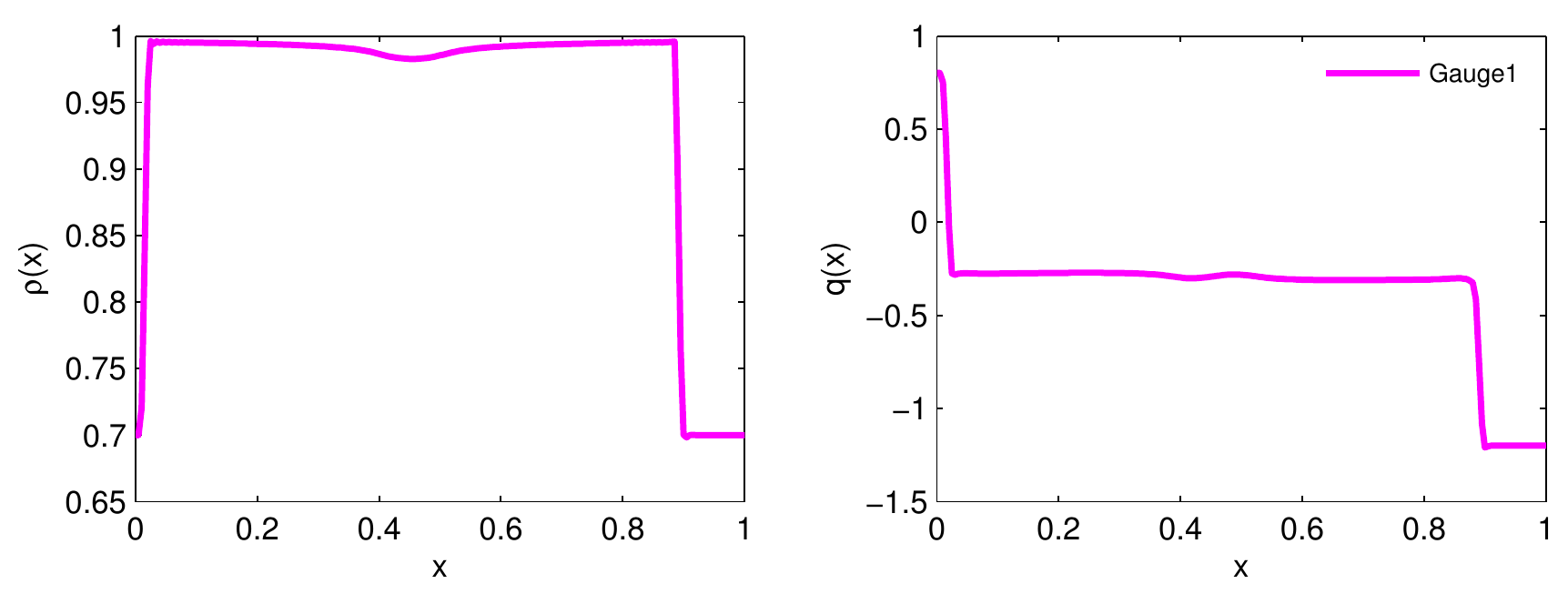}}
\caption{Gauge 1 method for problem (P3) with
  $\varepsilon=10^{-4}, \Delta x=5\times 10^{-3}, \Delta
  t=5\times 10^{-4}$.  The left graphs are for $\rho$, the right ones
  for $q$, both as functions of $x$.}
\label{fig:5}
 \end{figure}

\begin{figure}
  \centering
\subfigure[$t=0.0810$]{\label{fig:6:a} \includegraphics[scale=0.7]{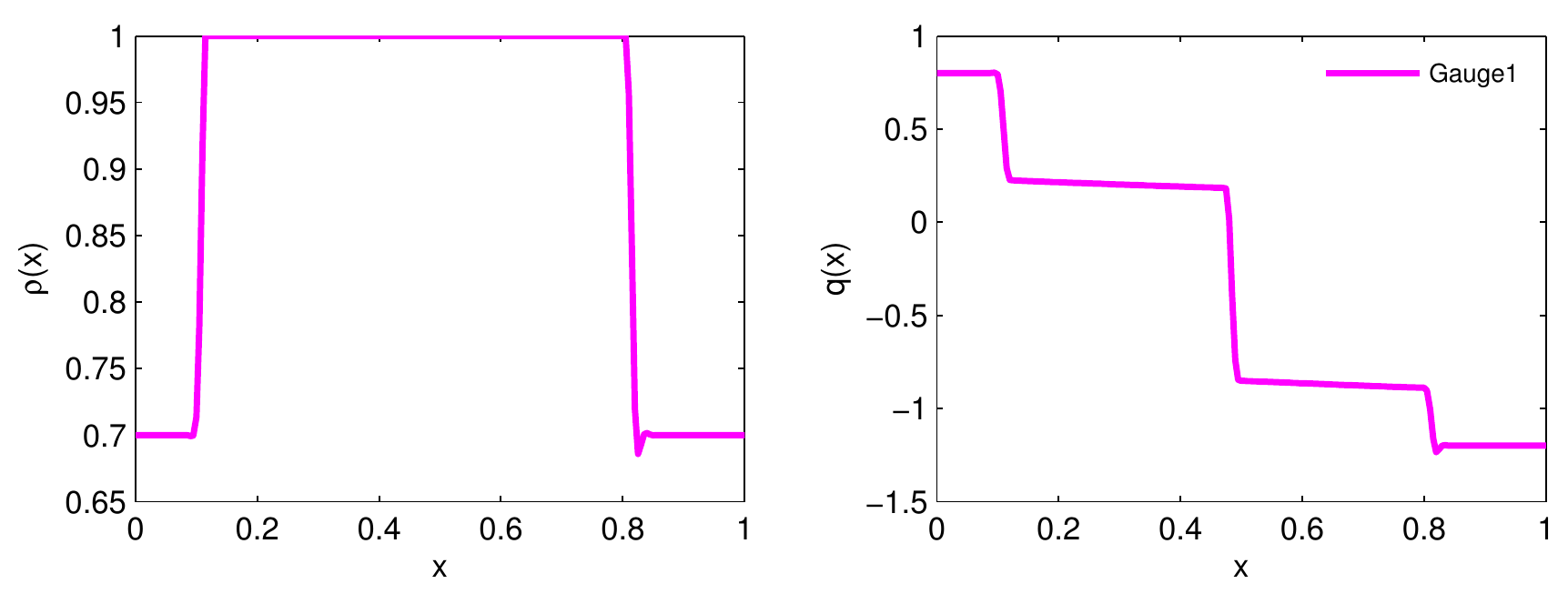}}\\
\subfigure[$t=0.0815$]{\label{fig:6:b} \includegraphics[scale=0.7]{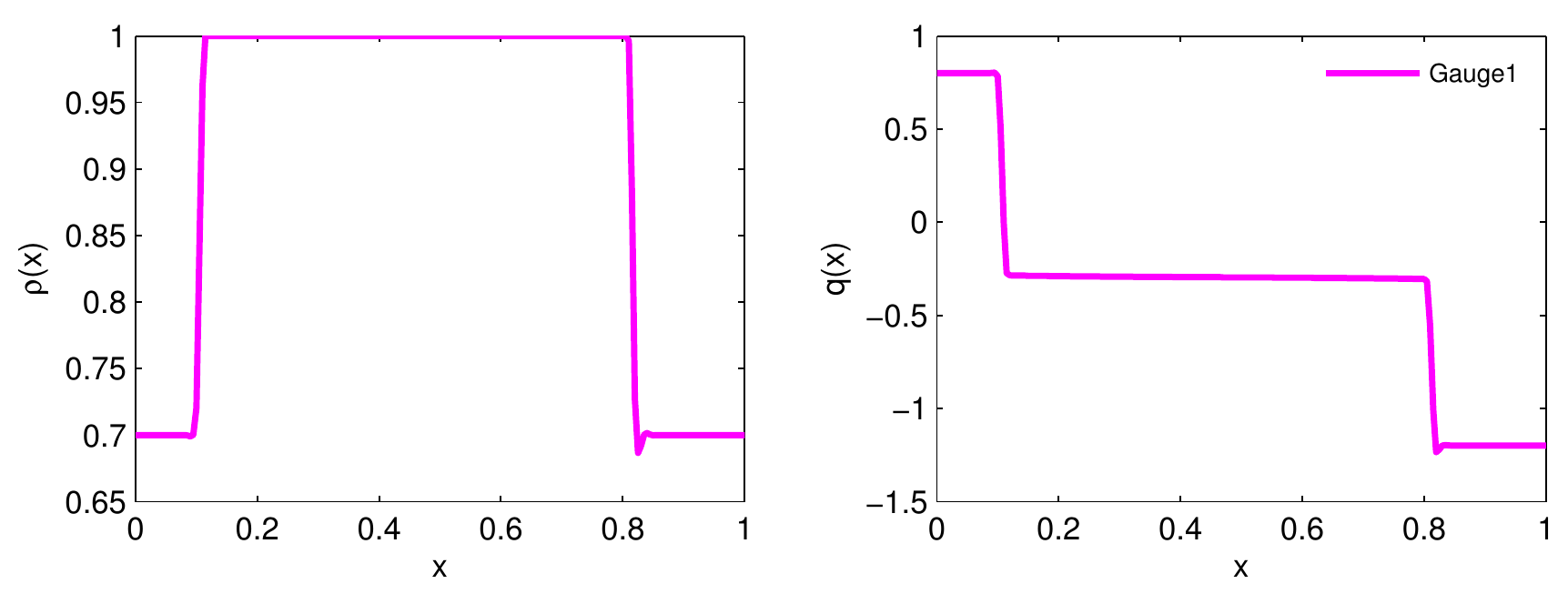}}
\caption{Gauge 1 method for problem (P3) with
  $\varepsilon=10^{-8}, \Delta x=5\times 10^{-3}, \Delta
  t=5\times 10^{-4}$. The left graphs are for $\rho$, the right ones
  for $q$, both as functions of $x$.}
\label{fig:6}
 \end{figure}
From Figures \ref{fig:5} and \ref{fig:6}, as $\varepsilon$ becomes
smaller, it takes shorter time to form a
new congestion region from the two colliding congestion regions: from
$48$ time steps to no more than one time step. It can be seen that as $\varepsilon\to 0$, the collision of these two
congested shocks aggregate instantaneously.

\textbf{Example 4. } 
The solution to the Riemann problem $(P4)$
consists of two shocks with intermediate state away from the
congestion density. So in the second Riemann problem there are two rarefaction
waves and a vacuum state appears as the intermediate state. As shown
in section \ref{sec:euler-with-cong}, these two rarefaction waves tend
to be contact waves. 

 As above, we choose $\varepsilon=10^{-4}, \Delta x=5\times 10^{-3}, \Delta
  t=5\times 10^{-4}$.
\begin{figure}
  \centering
  \includegraphics[scale=0.7]{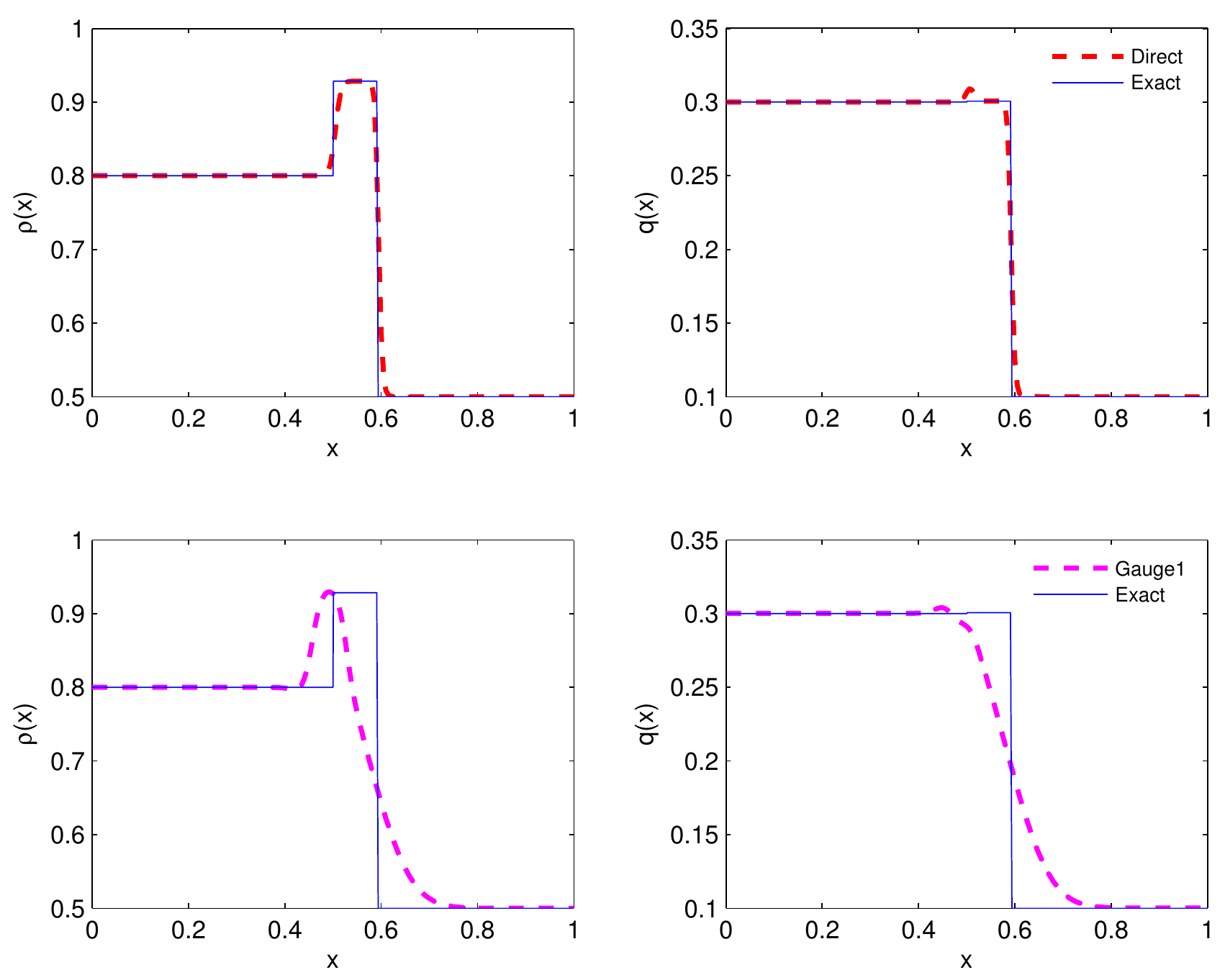}
\caption{The Direct and  Gauge 1
  methods for problem (P4) at $t=0.2$ with $\varepsilon=10^{-4}, \Delta x=5\times 10^{-3}, \Delta
  t=5\times 10^{-4}$. The solid
  lines are the exact solutions. The dashed curves are the
  numerical solutions. The left graphs are for $\rho$, the right ones
  for $q$, both as functions of $x$.}
\label{fig:7}
 \end{figure}
It can be seen from Figure \ref{fig:7} that the Direct method performs
well when the density is far away from congestion. However, the Gauge 1
method does not work well possibly due to the extra diffusion.

\begin{rem}
  All the features described above are preserved when we apply the
  fully implicit method \eqref{eq:17}\eqref{eq:18}(implicit in both
  $\rho$ and $\frac{q\otimes q }{\rho}$) to the Direct and Gauge 1
  method. There is a little improvement in the accuracy but no major one.
\end{rem}

Finally, we can compare the Gauge 1 and Gauge 2 methods.  With the
stencil of \eqref{eq:11} in the same setting as that of 
example 2,  the Gauge 2 method yields almost the same
numerical result as the Direct method (see Fig. \ref{fig:8}).
\begin{figure}
  \centering
  \includegraphics[scale=0.7]{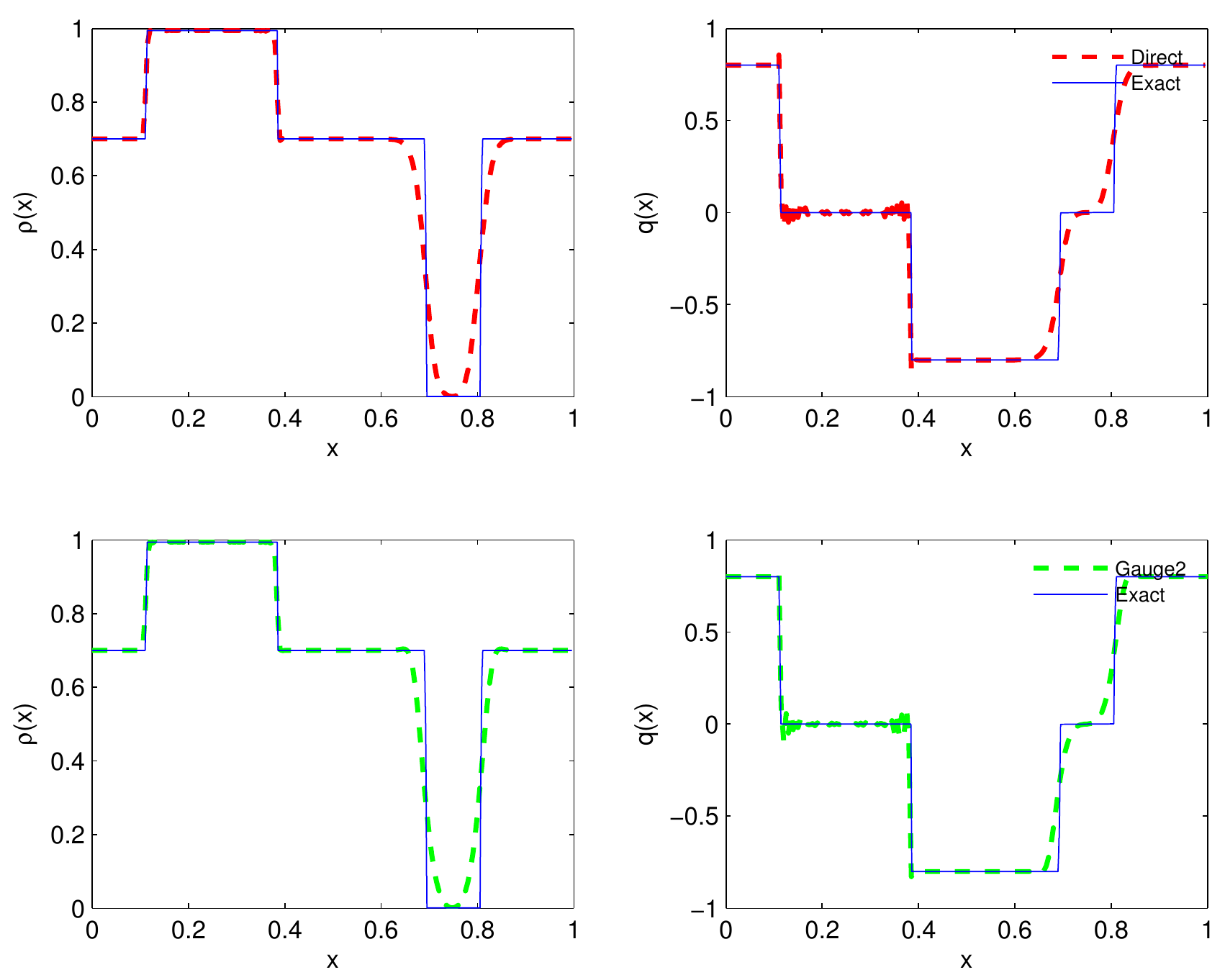}
\caption{The Direct  and  Gauge 2
  methods with discretization \eqref{eq:21} for problem (P2) at $t=0.05$. The solid
  lines are the exact solutions. The dashed curves are the
  numerical solutions. The left graphs are for $\rho$, the right ones
  for $q$, both as functions of $x$. }
\label{fig:8}
 \end{figure}

 \subsection{A two dimensional test case}\label{sec:2d-case-num}

In this section, we will test the Direct and Gauge 1 method in the
2D case. Since there is no theoretical solution in 2D case, only some
phenomena will be presented.

The test example is chosen to illustrate the collision of two
congested regions. It is basically a two dimensional extension of
the test case (P3) with some lateral shift. The initial data of the density  and velocity is:
\begin{gather}
\rho=0.8\times\mathrm{1}_{A\cup B}+0.6\times\mathrm{1}_{[0,1]\times[0,1]\backslash (A\cup B)},\\
  \boldsymbol{q}(x,y,0)=
  \begin{pmatrix}
    1\\ 0
  \end{pmatrix}
\mathrm{1}_A+\begin{pmatrix}
    -1\\ 0
  \end{pmatrix}\mathrm{1}_B,\\
A=\left[\frac{1}{6},\frac{5}{12}\right]\times\left[\frac{1}{3},\frac{7}{12}\right],
\quad B=\left[\frac{7}{12},\frac{5}{6}\right]\times\left[\frac{5}{12},\frac{2}{3}\right].
\end{gather}
The vector field $q$ is plotted in the Figure \ref{fig:17}.
\begin{rem}
  In this test example, the background density is $\rho=0.6$. It is
  even more interesting  to see what happens when the background
  density is close to zero. However, our schemes are not performing
  well in this situation. Indeed, vacuum is a big problem which needs
  some special treatment. And  when the background
  density is close to zero, the system is almost pressureless, which
  is also a difficult problem. These problems may be considered in the
  future work.
\end{rem}

\begin{figure}
  \centering
\subfigure[density $\rho$
]{\label{fig:17:a} \includegraphics[scale=0.5]{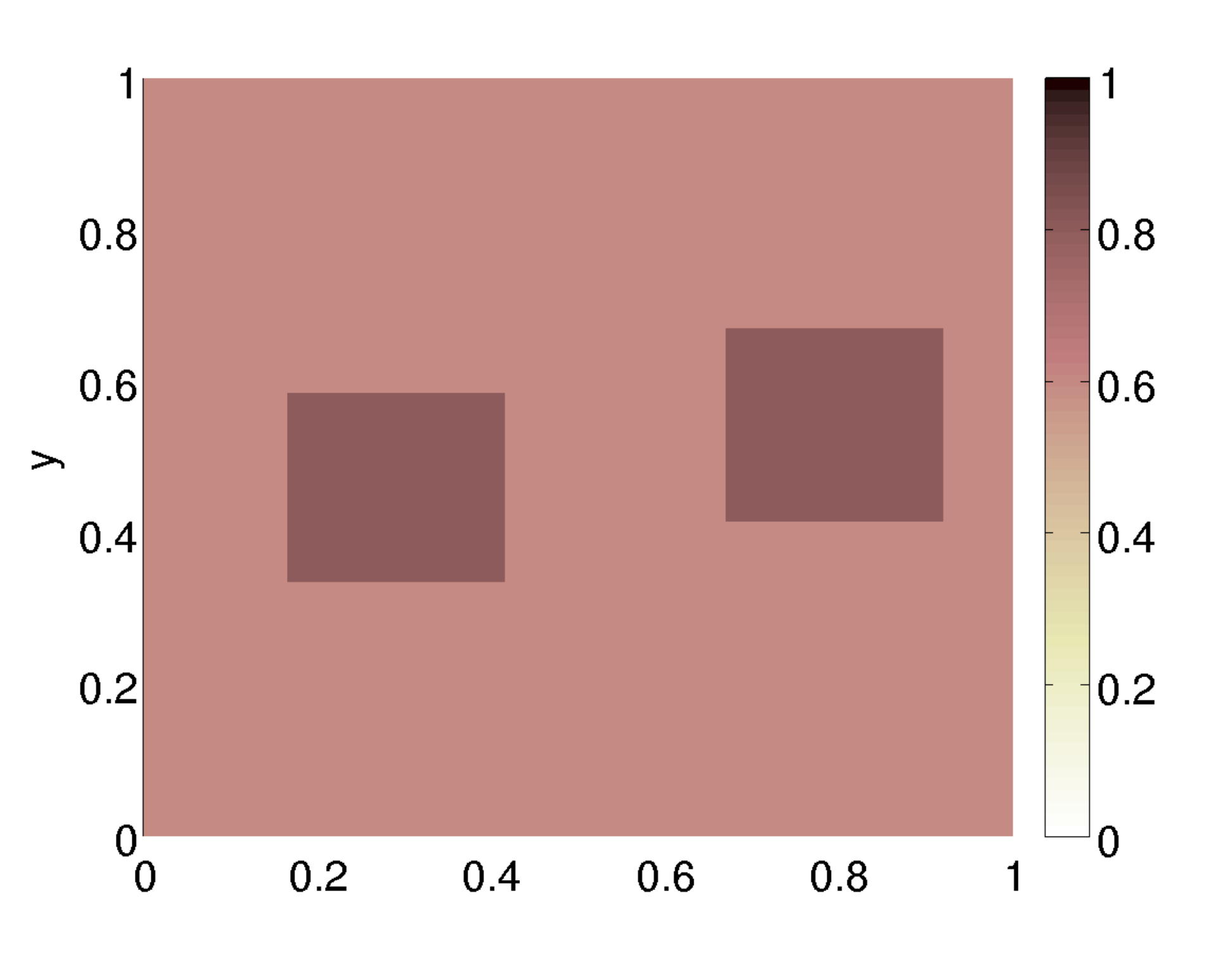}}
\subfigure[ vector field of momentum $\boldsymbol{q}$]{\label{fig:17:b} \includegraphics[scale=0.5]{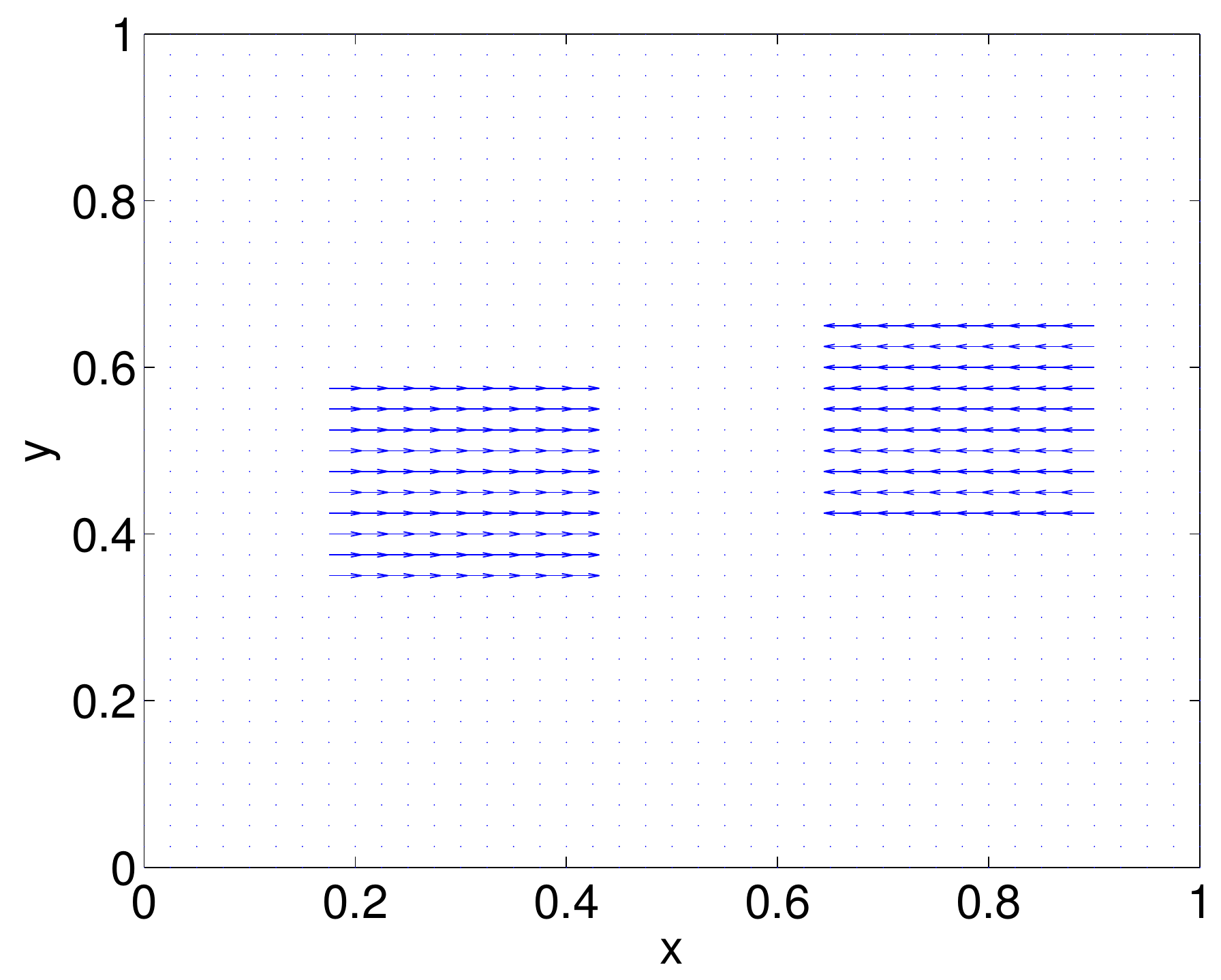}}
    \caption{The initial data of the density and  momentum
    $\boldsymbol{q}$,  both as functions of $x$ and $y$.}
  \label{fig:17}
\end{figure}
 It can be expected that there will be two congested regions forming
 and 
 moving towards each other with two shocks in the front and  two rarefaction
 waves left behind. These two shocks compress the fluid and cause
 congestion. This is reflected in Figure \ref{fig:11}. 
\begin{figure}
  \centering
\subfigure[density $\rho$ ]{\label{fig:11:a} \includegraphics[scale=0.5]{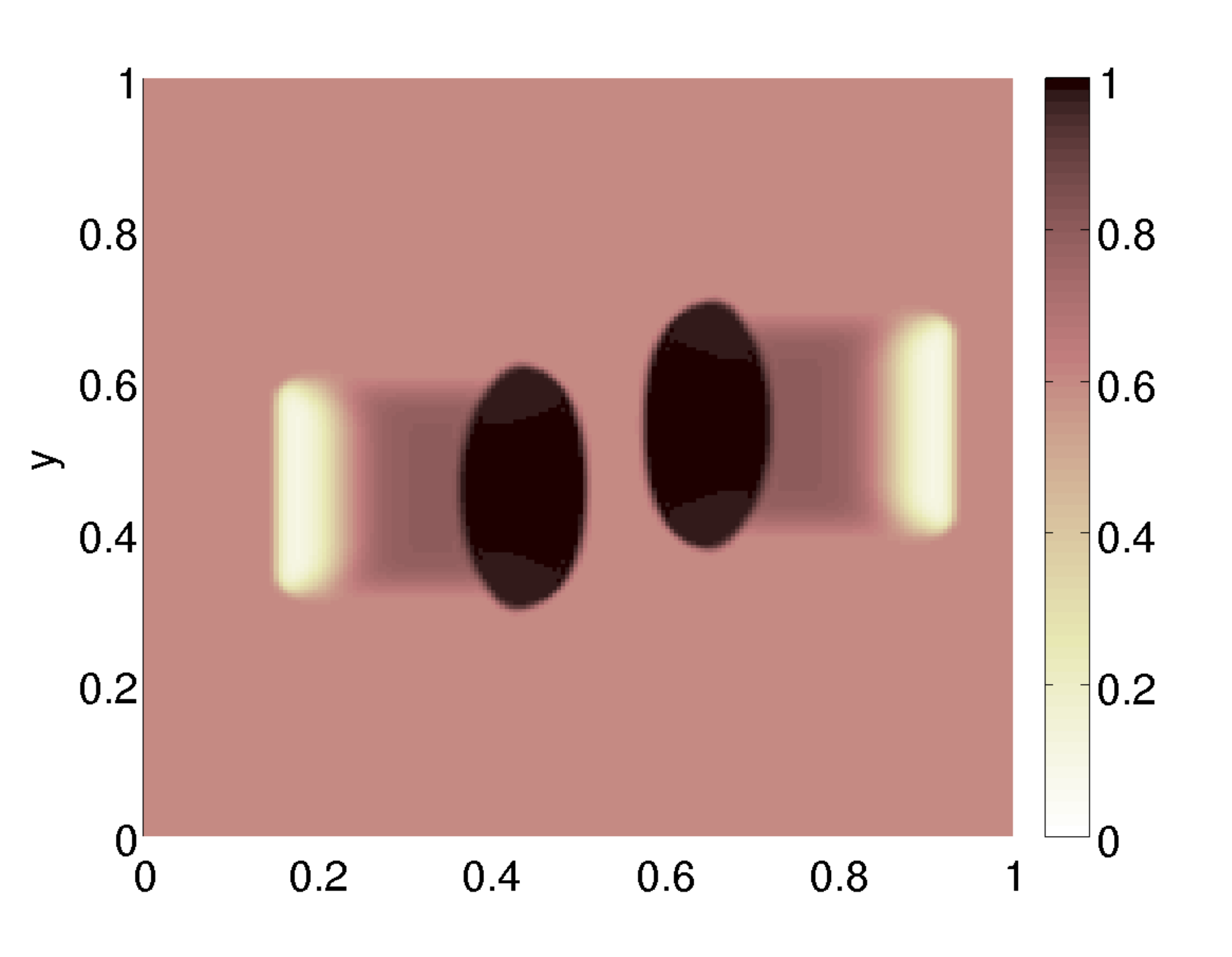}}
\subfigure[ vector field of momentum $\boldsymbol{q}$]{\label{fig:11:b} \includegraphics[scale=0.5]{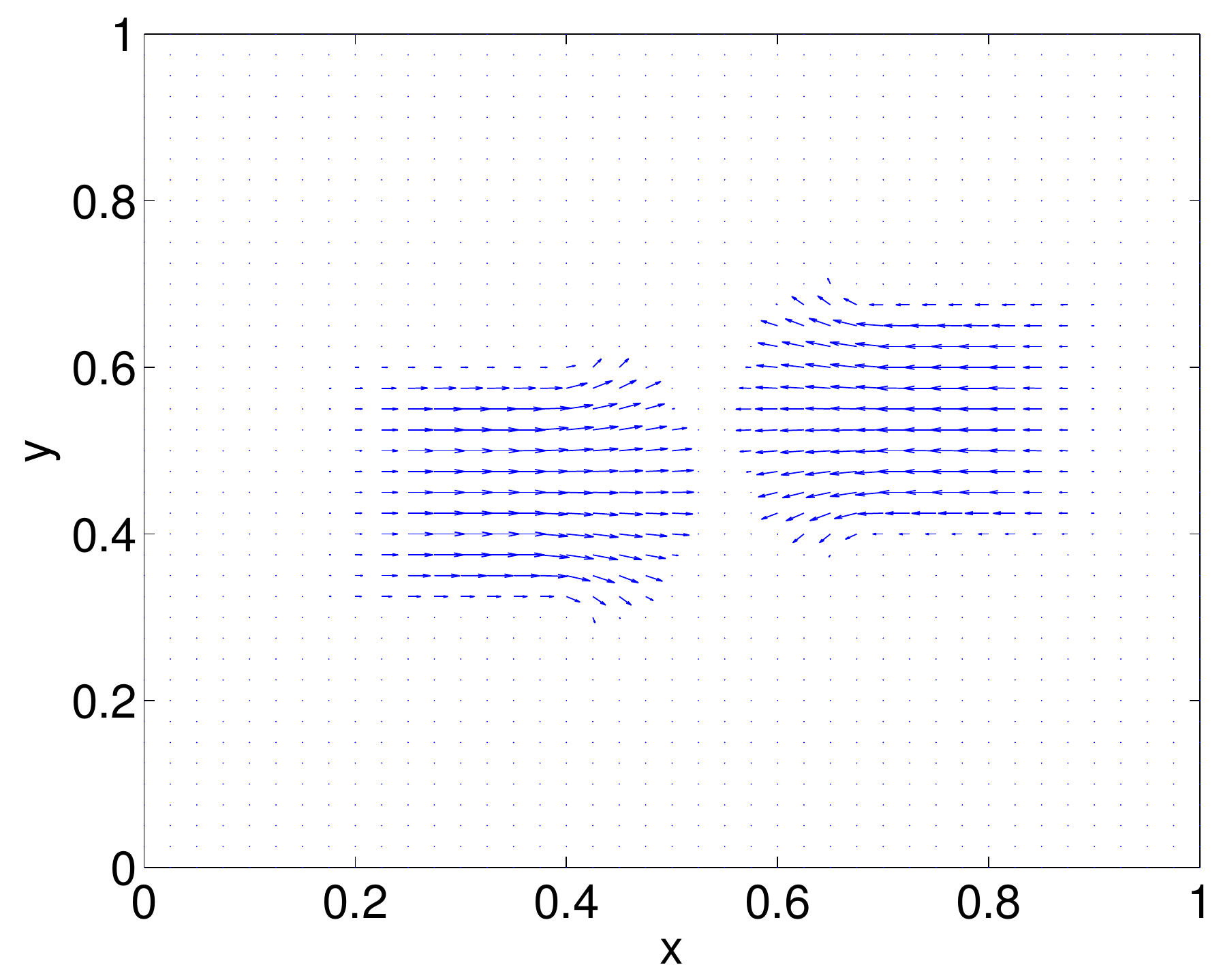}}
\caption{The  Direct method  with
  $\varepsilon=10^{-4}, \Delta x=5\times 10^{-3}, \Delta
  t=5\times 10^{-4}$ at $t=0.05$. The left graph is for $\rho$, the right one
  for vector field $q$, both as functions of $x$ and $y$.}  \label{fig:11}
\end{figure}

These two shocks with opposite directions meet at a later time. The
interaction of these shocks forms a bigger
congestion region with higher density
, which can be illustrated in Figure 
\ref{fig:12}. The interaction
 of the two shocks induces the formation of two new shocks moving in
 the orthogonal direction compared to the initial motion.
\begin{figure}
  \centering
\subfigure[density $\rho$]{\label{fig:12:a} \includegraphics[scale=0.5]{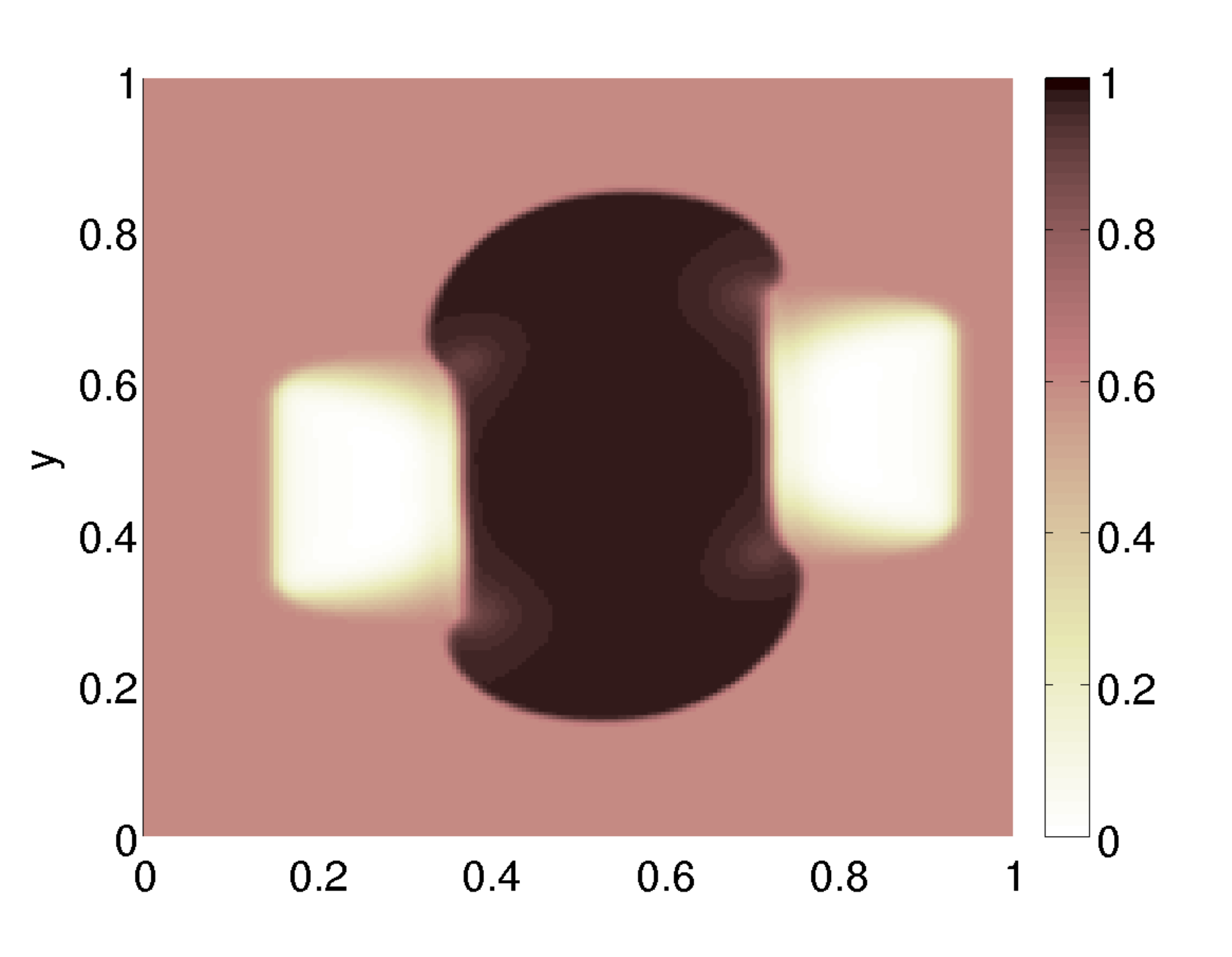}}
\subfigure[ vector field  of momentum $\boldsymbol{q}$]{\label{fig:12:b} \includegraphics[scale=0.5]{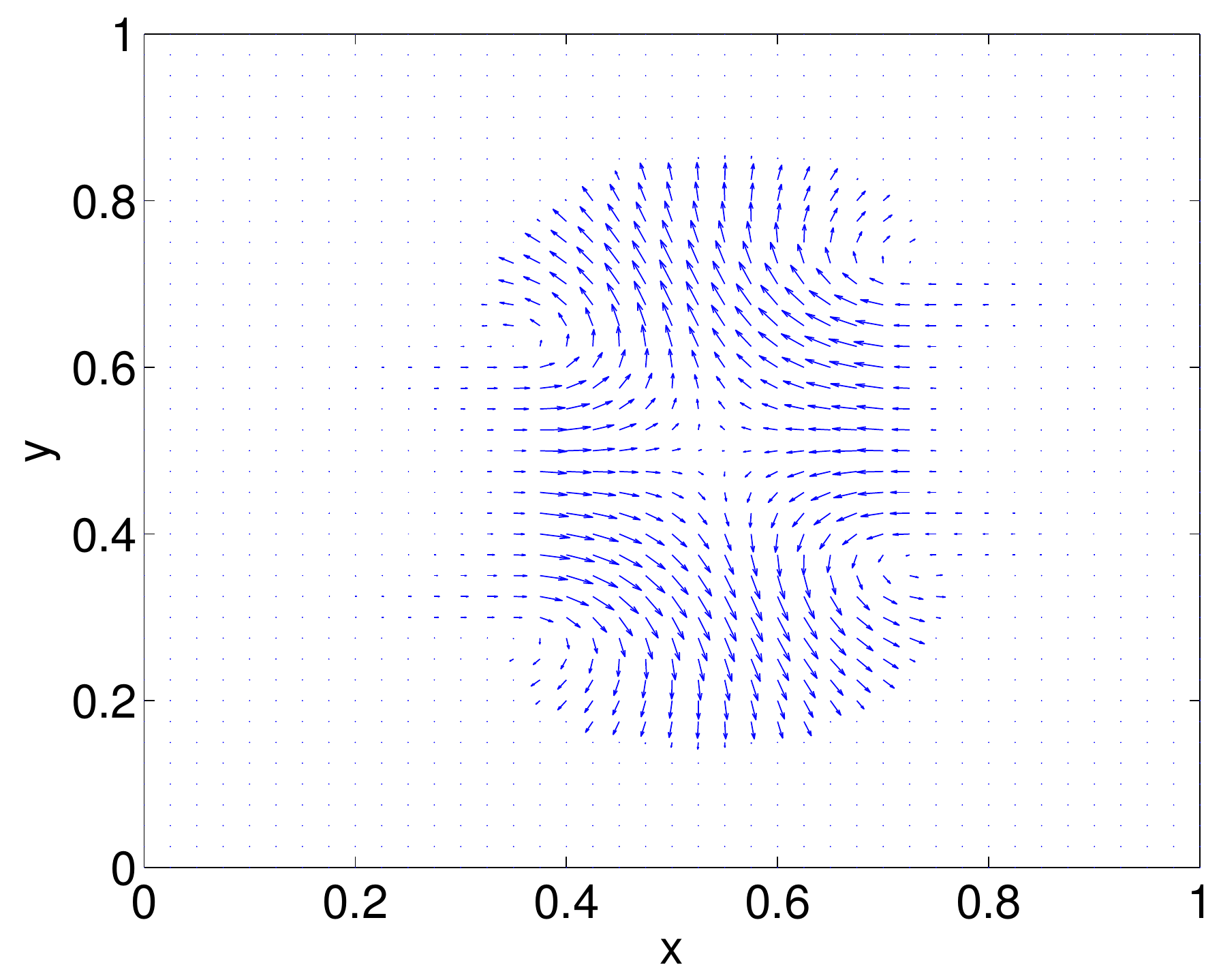}}
 \caption{The Direct method  with
  $\varepsilon=10^{-4}, \Delta x=5\times 10^{-3}, \Delta
  t=5\times 10^{-4}$ at $t=0.2$. The left graph is for $\rho$, the right one
  for vector field $q$, both as functions of $x$ and $y$.}  \label{fig:12}
\end{figure}

The similar result can be obtained by the Gauge 1 method as in Figure
\ref{fig:13} and \ref{fig:14}.
\begin{figure}
  \centering
\subfigure[density $\rho$]{\label{fig:13:a} \includegraphics[scale=0.5]{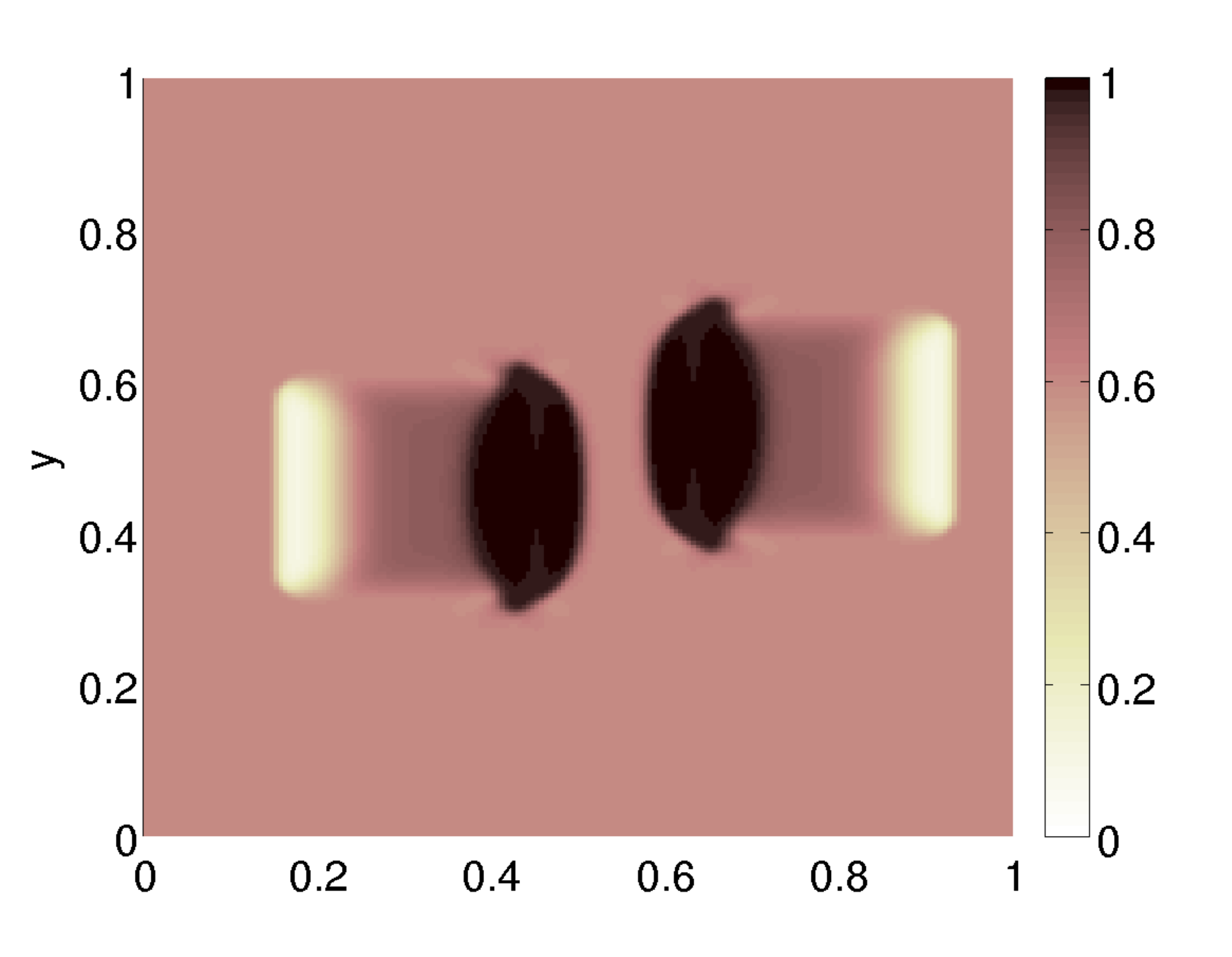}}
\subfigure[vector field  of momentum $\boldsymbol{q}$]{\label{fig:13:b} \includegraphics[scale=0.5]{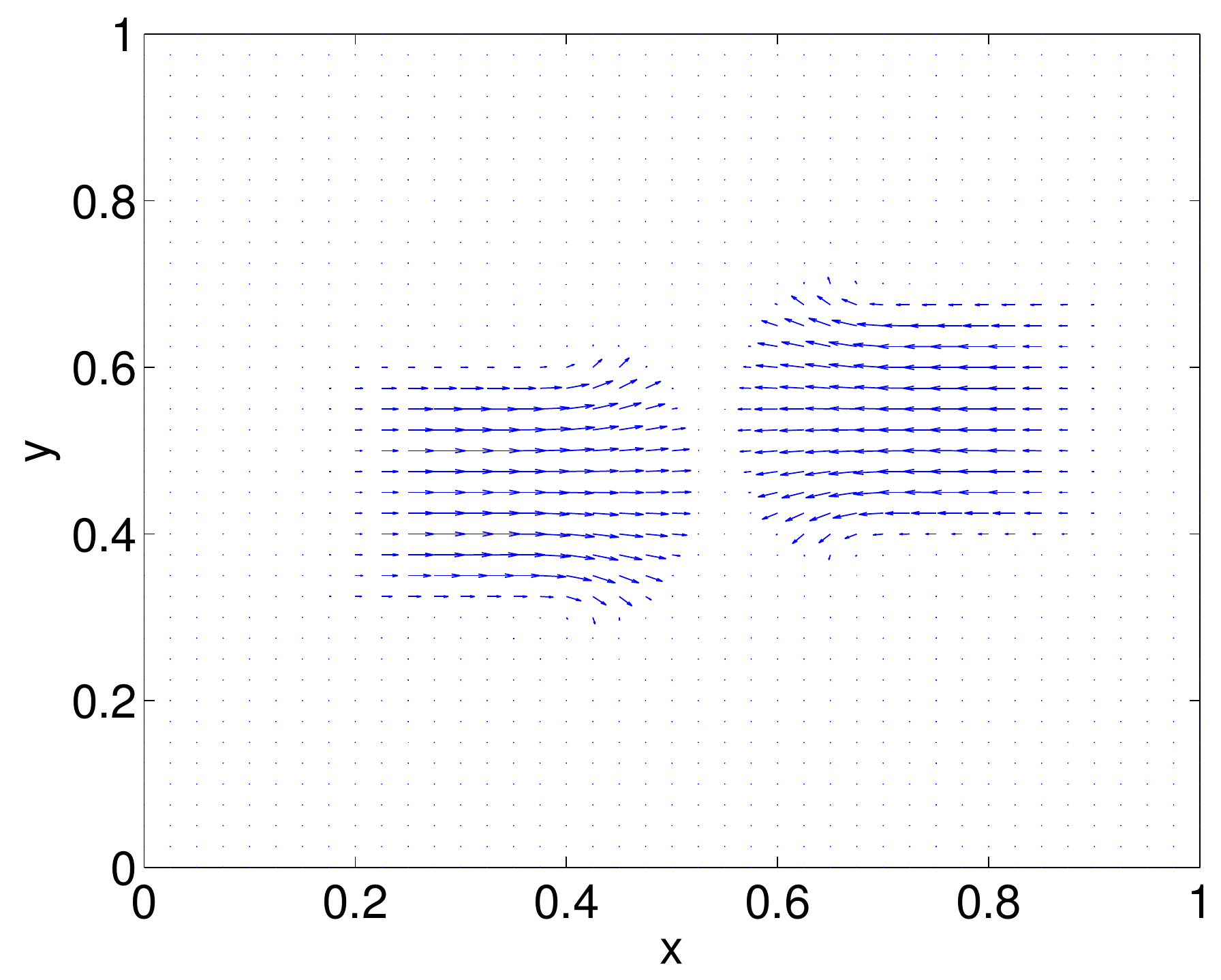}}
\caption{The   Gauge 1 method  with
  $\varepsilon=10^{-4}, \Delta x=5\times 10^{-3}, \Delta
  t=5\times 10^{-4}$ at $t=0.05$. The left graph is for $\rho$, the right one
  for vector field $q$, both as functions of $x$ and $y$.}  \label{fig:13}
\end{figure}
\begin{figure}
  \centering
\subfigure[density $\rho$]{\label{fig:14:a} \includegraphics[scale=0.5]{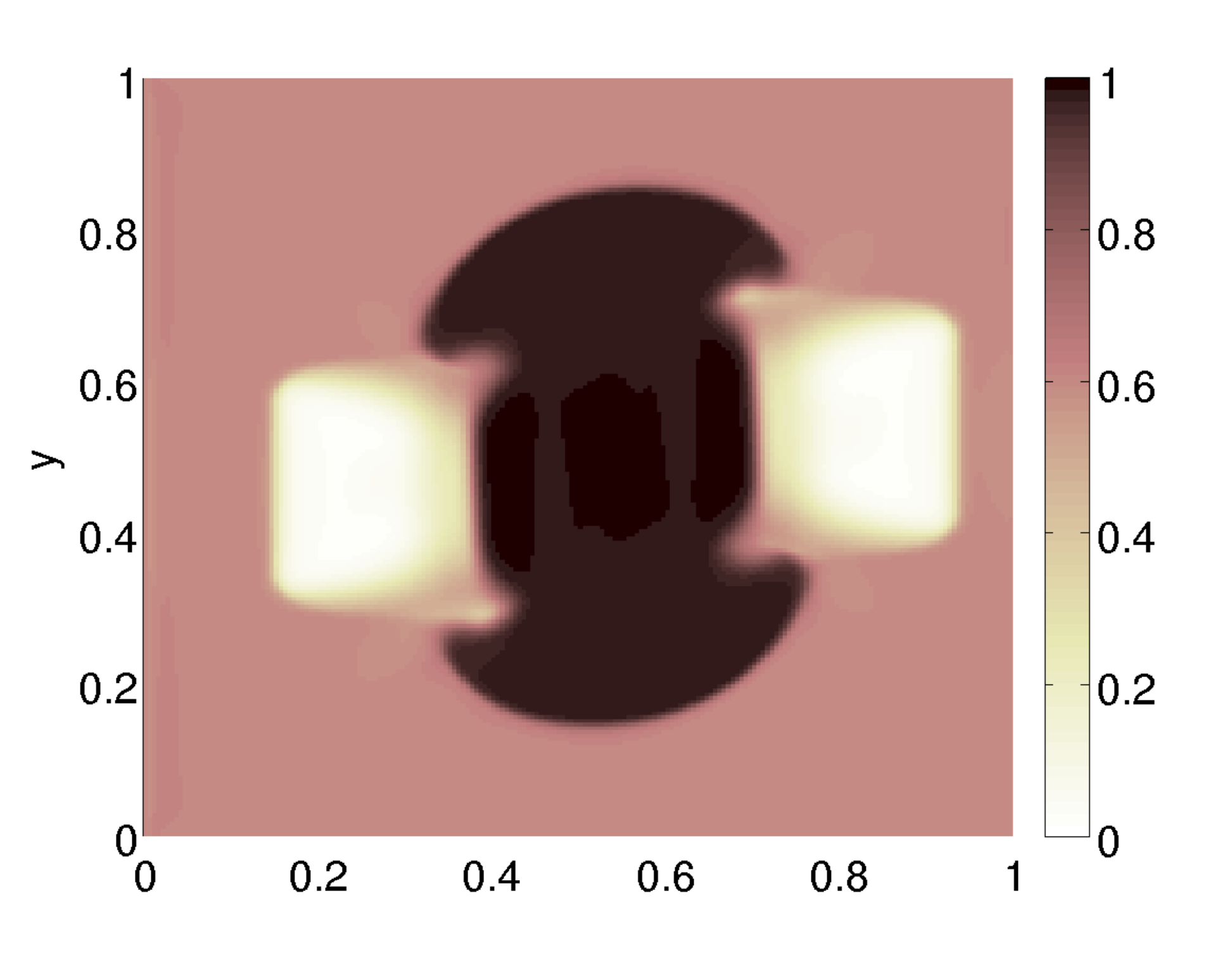}}
\subfigure[ vector field  of momentum $\boldsymbol{q}$]{\label{fig:14:b} \includegraphics[scale=0.5]{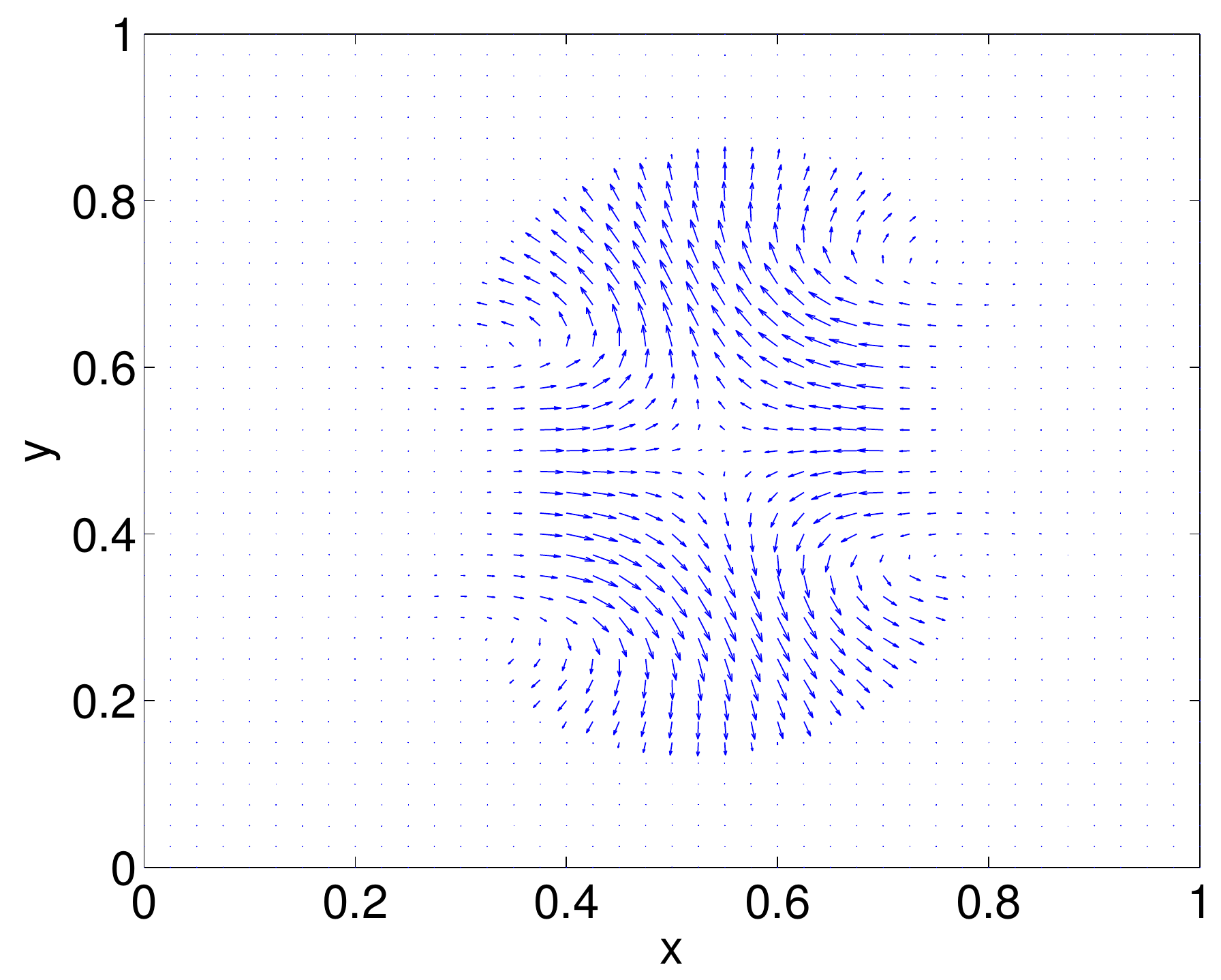}}
 \caption{The   Gauge 1 method  with
  $\varepsilon=10^{-4}, \Delta x=5\times 10^{-3}, \Delta
  t=5\times 10^{-4}$ at $t=0.2$. The left graph is for $\rho$, the right one
  for vector field $q$, both as functions of $x$ and $y$.}  \label{fig:14}
\end{figure}

The difference between the two methods in the two dimensional case is
not striking. In order to illustrate them, we look at a cut  at
$y=0.5$. The cuts are  displayed on Figure \ref{fig:15} and \ref{fig:16}.

\begin{figure}[bp]
  \centering
\subfigure[Direct method]{\label{fig:15:a} \includegraphics[scale=0.4]{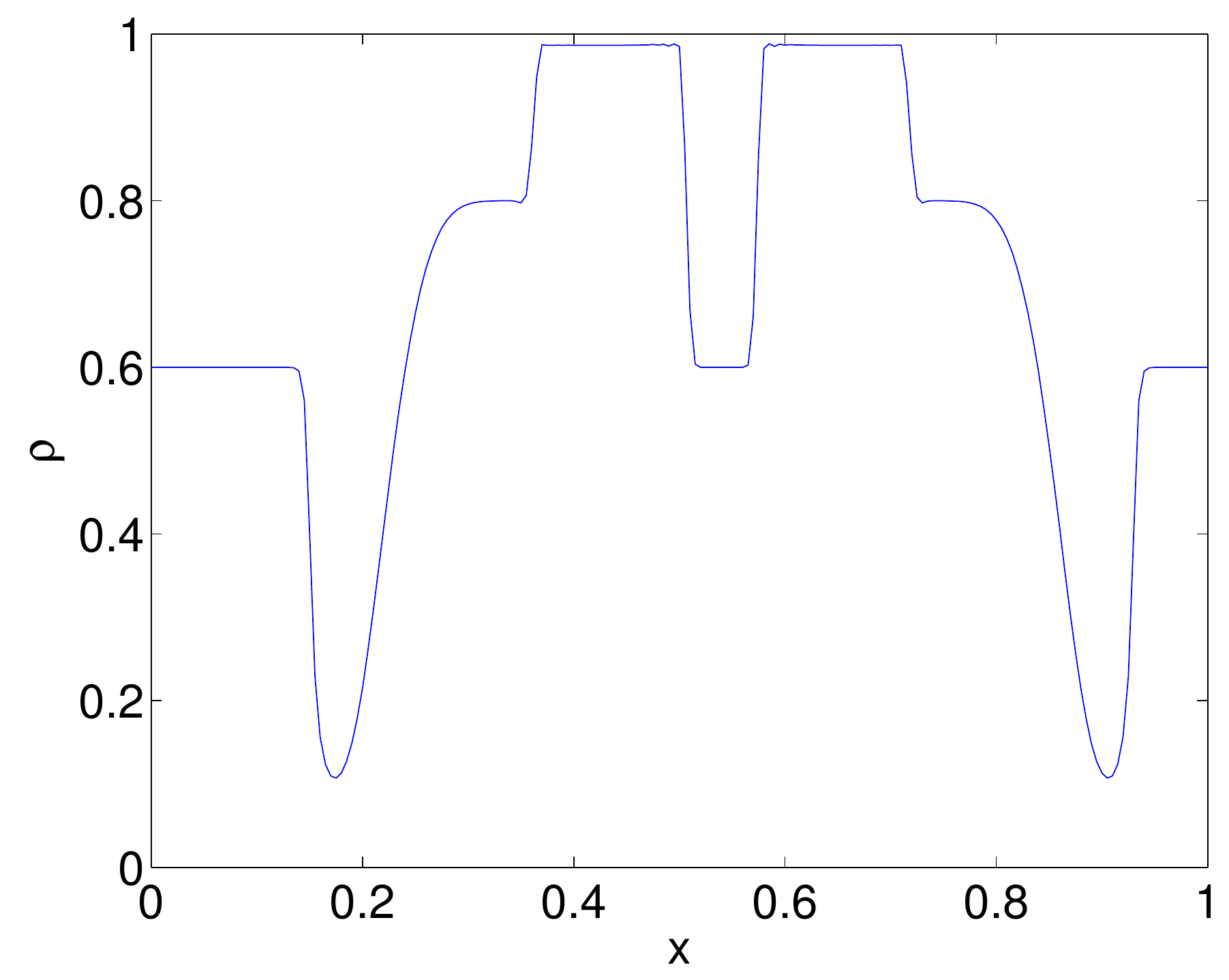}}
\subfigure[Gauge 1 method]{\label{fig:15:b} \includegraphics[scale=0.4]{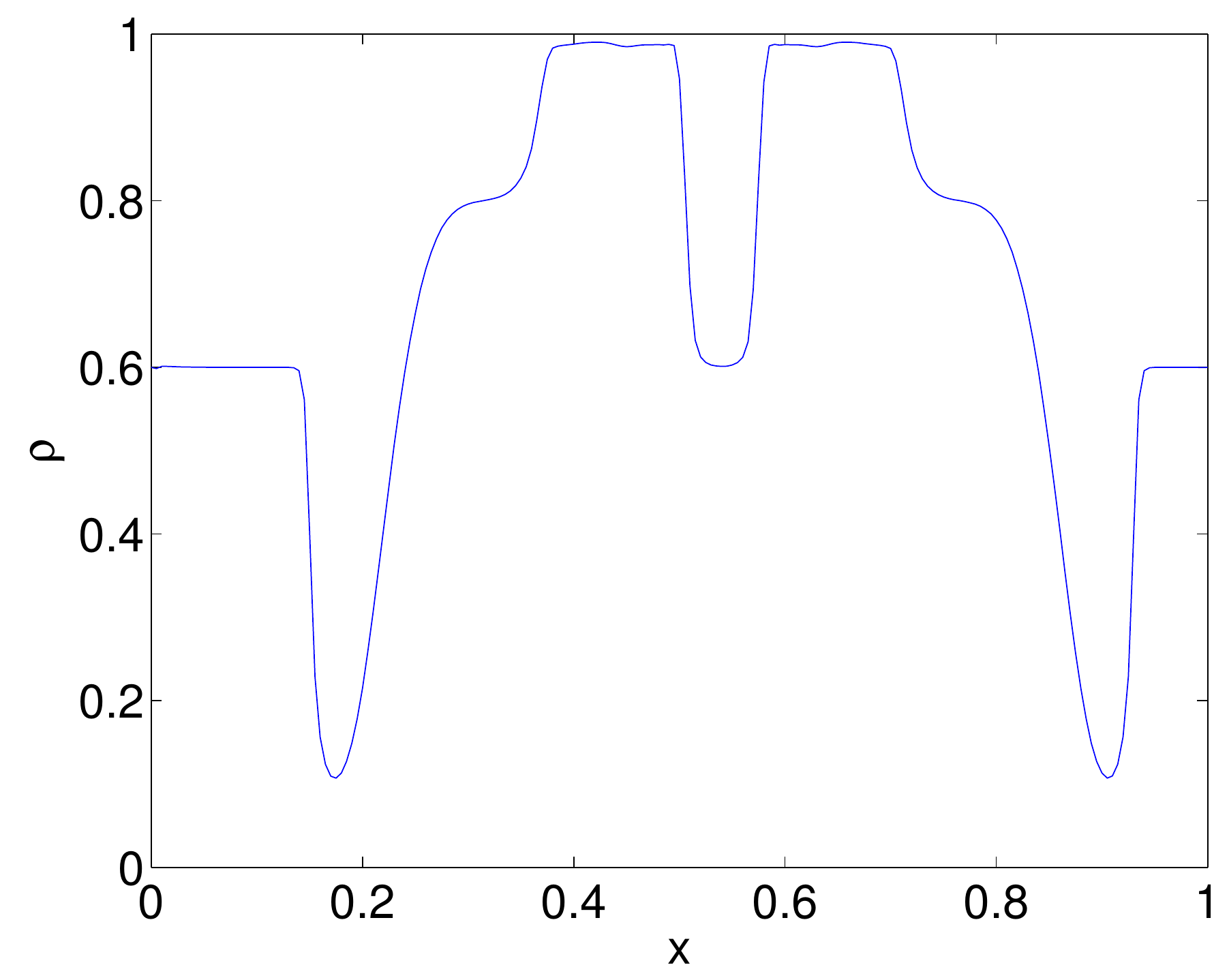}}
\caption{Direct and  Gauge 1 methods  with
  $\varepsilon=10^{-4}, \Delta x=5\times 10^{-3}, \Delta
  t=5\times 10^{-4}$ at $t=0.05$. Cut of
  the density along the line $y=0.5$, as a function of $x$. Left hand:
Direct method; right hand: Gauge 1 method.}  \label{fig:15}
\end{figure}
\begin{figure}[bp]
  \centering
\subfigure[Direct method]{\label{fig:16:a} \includegraphics[scale=0.4]{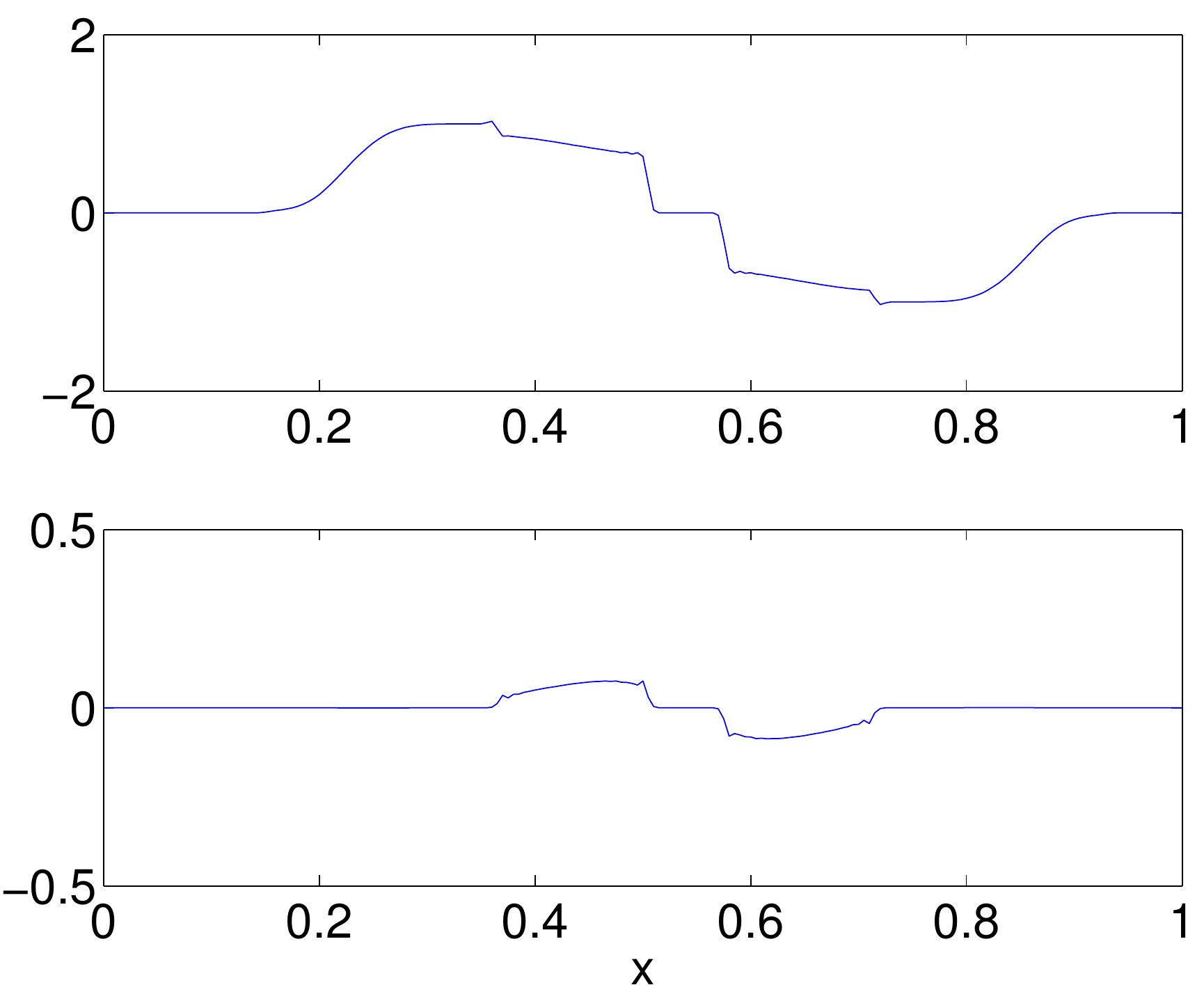}}
\subfigure[Gauge 1 method]{\label{fig:16:b} \includegraphics[scale=0.4]{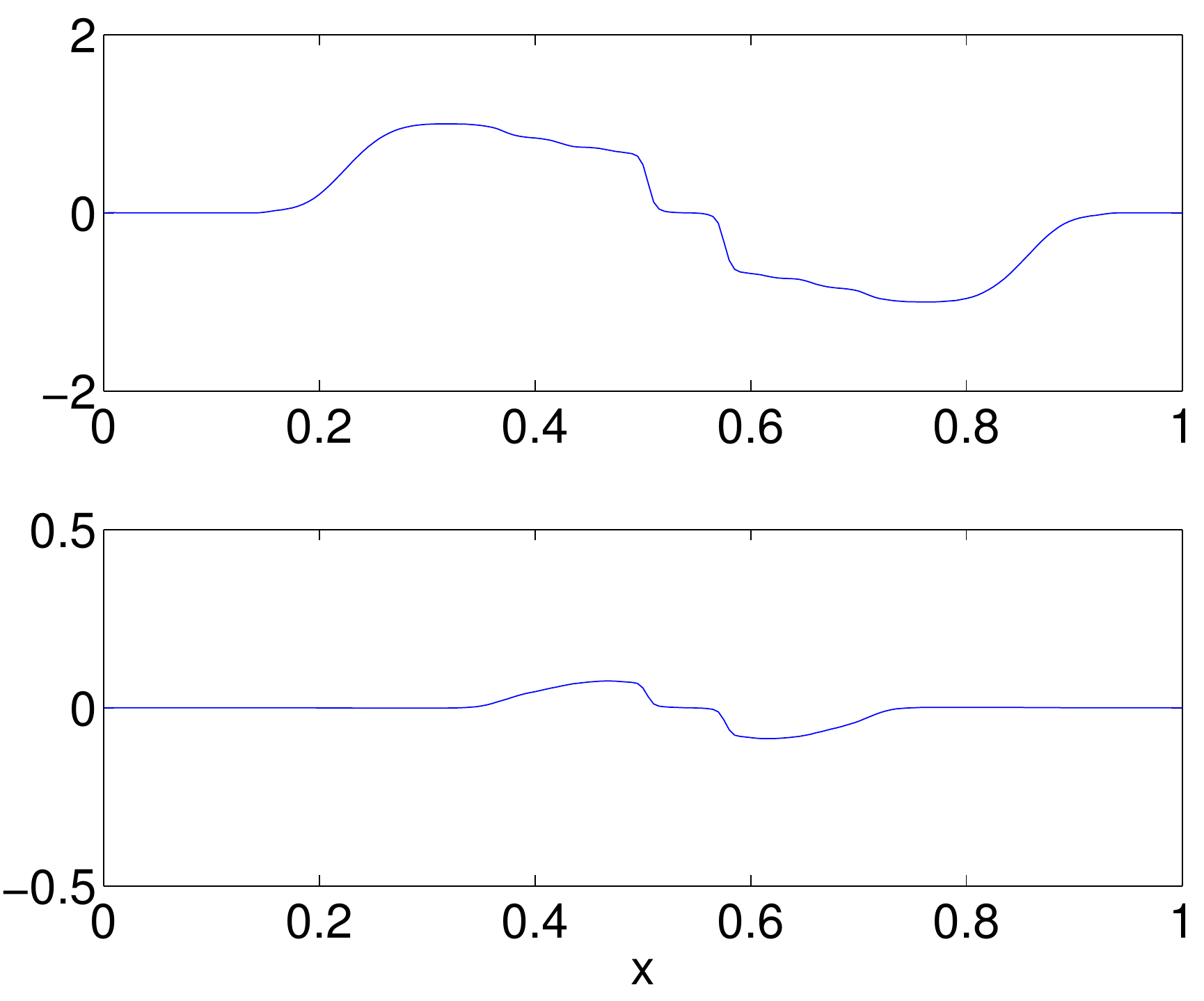}}
\caption{Momentum  of the Direct and  Gauge 1 methods  with
  $\varepsilon=10^{-4}, \Delta x=5\times 10^{-3}, \Delta
  t=5\times 10^{-4}$ at $t=0.05$. Cut of
  the momentum along the line $y=0.5$, as a function of $x$. Left hand:
Direct method; right hand: Gauge 1 method.}  \label{fig:16}
\end{figure}
Similar observations as in the one dimensional case can be made from Figure
\ref{fig:15} and \ref{fig:16}. The Gauge 1 method provides a little  less oscillations
in the congestion region but  also brings more diffusion.
\clearpage

\section{Conclusion}

In this paper, we have studied the Euler system with a maximal density
constraint. A small parameter $\eps$ was introduced to measure the
stiffness of the constraint. As $\eps \rightarrow 0$, the model gives
rise to a two-phase model with congested regions (with maximal
density) and uncongested regions (with density below the maximal
density). 

One-dimensional solutions of this asymptotic problem have
been investigated to provide the information of  the interface
conditions. However, it can not characterize the whole
dynamics and it is hard to extend to higher dimensional cases. Therefore, we have devised asymptotic preserving numerical
schemes, which are valid for all range  of $\eps$ and thus are
capable of  capturing the asymptotic dynamics. Two numerical schemes have been
considered and compared on both one-dimensional and two dimensional
test-cases. They both capture the congested regions well. However, the first method shows some oscillations near
the interface between congested and uncongested regions, while the
second has much less oscillations but is more diffusive. A careful
error analysis in different norms with respect to different parameters (time and space steps,
$\varepsilon$) are conducted for a one dimensional test
case. 

Following \cite{2010_Tang}, the development of
second order schemes will be performed in future works. More robust schemes for
capturing the divergence free constraint in the congested regions could
also be investigated. Coupling our methodology with schemes dedicated to the pressureless gas dynamics could also improve the results in the low density regions. Finally, simulations of the non-conservative model with a
 supplementary geometric constraint on the speed of the flow
 \cite{2010_Congestion} will be an
 interesting problem. 
\\

\noindent\textbf{Acknowledgement} This work has been supported by the Marie Curie Actions of the European 
Commission in the frame of the DEASE project (MEST-CT-2005-021122), by the 
Agence Nationale de la Recherche (ANR) under contracts PANURGE 
(ANR-07-BLAN-0208-03) and PEDIGREE (ANR-08-SYSC-015-01). 

\renewcommand{\thesection}{\Alph{section}}
\setcounter{section}{0} 

\section[Solutions of the one-dimensional problem]{Appendix : Solutions of the one-dimensional problem: the Riemann problem and the cluster collisions}
 \label{sec:one-dimens-riem}
 The one-dimensional version of system (\ref{Eq:Euler_rho_eps})-(\ref{Eq:Euler_q_eps}) can be written as follows:
\begin{eqnarray}
&&\partial_{t}\rho + \partial_{x}q = 0,\label{Eq:rho_1D_eps}\\
&&\partial_{t}q + \partial_{x}\left(\frac{q^2}{\rho}\right) + \partial_{x} (\eps p(\rho)) = 0,\label{Eq:q_1D_eps}
\end{eqnarray}
where $q(x,t)$ is here a scalar function and $x$ is the position in $\R$. In this section, we first investigate  the Riemann problem and the limits of its solutions as $\eps \rightarrow 0$ and then we briefly recall the one-dimensional solutions of the limit system (\ref{Eq:mass})-(\ref{Eq:momentum})-(\ref{Eq:constraint_CongNum}) provided in \cite{2PhaseFlow_Bouchut_al}, which consists of the collision of two finite clusters (domain where $\rho = \rho^{\ast}$).  

\subsection{The one-dimensional Riemann problem} 
The Riemann problem is an initial value problem, where the initial condition is a piece-wise constant function with a discontinuity between two constant states: 
\begin{equation*}
(\rho_{0},q_{0})(x) = \left\{\begin{array}{cl} (\rho_{\ell},q_{\ell}), \text{ for } x < 0,\\
(\rho_{r},q_{r}), \text{ for } x \geq 0.
\end{array}\right.
\end{equation*} 
The solutions of this problem for the Euler system (\ref{Eq:rho_1D_eps})-(\ref{Eq:q_1D_eps}) are well known. So, the strategy is to take the limits of these solutions as $\eps$ goes to zero. This provides the solutions of the Riemann problem for the singular asymptotic limit (\ref{Eq:mass})-(\ref{Eq:momentum})-(\ref{Eq:constraint_CongNum}). Similar studies was carried out in \cite{2003_ChenLiu_FormationDelta} for the isentropic Euler equation, in \cite{2008_Traffic_DegondRascle} for a traffic jam model and in \cite{2010_Congestion} for a herding problem.

\subsubsection{Shock and rarefaction waves}
\label{Appendix:rarefaction_shock}

The material of this section is classical, and is given here only for the reader's convenience. 

 The two characteristic speeds $\lambda_{\pm}^{\eps}$ and the two characteristic fields $\br_{\pm}^{\eps}$ of the one-dimensional system (\ref{Eq:rho_1D_eps})-(\ref{Eq:q_1D_eps}) are:
\begin{equation*}
\lambda_{\pm}^{\eps} = u \pm \sqrt{\eps p'(\rho)},\quad \br_{\pm}^{\eps} = \left(\begin{array}{c} 1\\ u \pm \sqrt{\eps p'(\rho)}\end{array}\right).
\end{equation*}
It can be easily checked that both characteristic fields are genuinely non linear for positive densities. Therefore, the solutions of the Riemann problem are made of constant states separated by rarefaction or shock waves \cite{leveque_book_2002}. 

The \textbf{rarefaction waves} are continuous self similar solutions: $(\rho(x/t),q(x/t))$. Given a state $(\hat\rho,\hat q)$, the states which can be connected to $(\hat\rho,\hat q)$ by a rarefaction wave are those located on the integral curves $i_{\pm}^{\eps}$ of the right eigenvectors of the Jacobian matrix of the flux function issued from $(\hat\rho,\hat q)$. They are given by:
\begin{equation*}
\rho'(s) = 1,\quad i_{\pm}^{\eps\ '}(s) = \hat u \pm  \sqrt{\eps p'(\rho)},\quad \rho(0) = \hat\rho,\quad  i_{\pm}^{\eps}(0) = \hat q,
\end{equation*}
which is equivalent to $i_{\pm}^{\eps\ '}(\rho) =  \hat u \pm  \sqrt{\eps p'(\rho)},\ i_{\pm}^{\eps}(\hat\rho) = \hat q$ and then to equation: 
\begin{equation}
i_{\pm}^{\eps}(\rho) = \rho\hat u \pm \rho\sqrt{\eps}(P(\rho) - P(\hat\rho)),\label{Eq:integral_curve}
\end{equation} 
where $P$ is an antiderivative of $\sqrt{p'(u)}/u$ and $\hat u = \hat q/\hat\rho$. The graph of $i_{-}^{\eps}$ (resp. $i_{+}^{\eps}$) is called the 1-integral curve (resp. the 2-integral curve). The following proposition provides several features of the integral curves.
\begin{Prop} \textbf{(Integral and rarefaction curves)}\label{Rarefaction_wave}
\begin{enumerate} 
\item The 1-integral curve $i_{-}^{\eps}$ (resp. the 2-integral curve $i_{+}^{\eps}$) is concave (resp. convex) as functions of $\rho$ and $i_{-}^{\eps}(0) = i_{+}^{\eps}(0) = 0$.
\item The limit of the integral curves as $\eps$ goes to zero is the union of the straight lines $\left\{ q = \rho \hat u\right\}$ and $\left\{\rho = \rho^{\ast}\right\}$. 
\item Suppose that the state $\hat\rho^{\eps}$ is such that $\hat\rho^{\eps} \rightarrow \rho^{\ast}$ and $\eps p(\hat\rho^{\eps}) \rightarrow \bar{p}$. For all $\rho < \hat\rho^{\eps}$, we have:
\begin{equation*}
\quad \left|\frac{i_{\pm}^{\eps}(\rho)}{\rho} - \hat u\right| \leq \sqrt{\eps}\int_{0}^{\hat\rho^{\eps}}\frac{\sqrt{p'(u)}}{u} du = \underset{\hat\rho^{\eps}\rightarrow\rho^{\ast}}{O}(\eps^{\frac{1}{2\gamma}}) 
\end{equation*}
\item If $(\hat\rho,\hat q)$ is a left state, the right states which can be connected to it by a rarefaction wave are those located on the 1-rarefaction curve $\left\{(\rho,i_{-}^{\eps}(\rho)),\ \rho < \hat\rho\right\}$ or the 2-rarefaction curve $\left\{(\rho,i_{+}^{\eps}(\rho)),\ \rho > \hat\rho\right\}$.

\end{enumerate}
\end{Prop}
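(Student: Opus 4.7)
The plan is to prove all four statements directly from the closed form $i_{\pm}^{\eps}(\rho)=\rho\hat u\pm\rho\sqrt{\eps}\bigl(P(\rho)-P(\hat\rho)\bigr)$ in (\ref{Eq:integral_curve}), using the identity $P'(\rho)=\sqrt{p'(\rho)}/\rho$ together with monotonicity and growth estimates on $p$ near $\rho=0$ and $\rho=\rho^{\ast}$. The only nontrivial point is item (3), where the rate is controlled by how fast $\sqrt{p'(\rho)}$ blows up as $\rho\to\rho^{\ast}$.

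For (1), I would differentiate twice and obtain
\[
(i_{\pm}^{\eps})''(\rho)=\pm\sqrt{\eps}\left(\frac{\sqrt{p'(\rho)}}{\rho}+\frac{p''(\rho)}{2\sqrt{p'(\rho)}}\right).
\]
Because $p(\rho)=(\tfrac{1}{\rho}-\tfrac{1}{\rho^{\ast}})^{-\gamma}$ is strictly increasing and convex on $(0,\rho^{\ast})$, the bracket is positive, hence $i_{+}^{\eps}$ is convex and $i_{-}^{\eps}$ concave. The boundary value $i_{\pm}^{\eps}(0)=0$ follows from $p(\rho)\sim\rho^{\gamma}$ near $0$, which gives $\sqrt{p'(\rho)}/\rho\sim\rho^{(\gamma-3)/2}$, so that $\rho P(\rho)\to 0$ as $\rho\to 0$. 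For (2), I would fix $\rho\in(0,\rho^{\ast})$: on this range $P(\rho)-P(\hat\rho)$ stays finite, the prefactor $\sqrt{\eps}$ forces $i_{\pm}^{\eps}(\rho)\to\rho\hat u$, so the limiting graph is the straight line $q=\rho\hat u$. The curve reaches $\rho=\rho^{\ast}$ in the vertical asymptote since $P(\rho^{\ast})=+\infty$ when $\gamma\geq 1$, yielding the second limiting branch $\{\rho=\rho^{\ast}\}$.

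For (3), the estimate is immediate from the explicit form:
\[
\left|\frac{i_{\pm}^{\eps}(\rho)}{\rho}-\hat u\right|=\sqrt{\eps}\,|P(\rho)-P(\hat\rho^{\eps})|\leq\sqrt{\eps}\int_{0}^{\hat\rho^{\eps}}\frac{\sqrt{p'(u)}}{u}\,du,
\]
using that the integrand is positive and $\rho<\hat\rho^{\eps}$. The main step is the order estimate. Because $\eps\, p(\hat\rho^{\eps})\to\bar p$, one has $\rho^{\ast}-\hat\rho^{\eps}=O(\eps^{1/\gamma})$. Near $\rho^{\ast}$, $p'(\rho)\asymp(\rho^{\ast}-\rho)^{-(\gamma+1)}$, hence
\[
\int_{0}^{\hat\rho^{\eps}}\frac{\sqrt{p'(u)}}{u}\,du \asymp (\rho^{\ast}-\hat\rho^{\eps})^{-(\gamma-1)/2}= O(\eps^{-(\gamma-1)/(2\gamma)})
\]
for $\gamma>1$, so that multiplying by $\sqrt{\eps}$ gives the advertised $O(\eps^{1/(2\gamma)})$. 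The cases $\gamma\leq 1$ are handled analogously (the integral remains bounded, so the bound is actually $O(\sqrt{\eps})$, which is stronger than $O(\eps^{1/(2\gamma)})$).

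For (4), I would invoke the standard rarefaction selection principle for strictly hyperbolic, genuinely nonlinear systems: a rarefaction from $(\hat\rho,\hat q)$ to $(\rho,q)$ is admissible iff the associated eigenvalue $\lambda_{\pm}^{\eps}$ is monotone increasing from $(\hat\rho,\hat q)$ to $(\rho,q)$ along the integral curve. Along $i_{-}^{\eps}$ one computes $\frac{d\lambda_{-}^{\eps}}{d\rho}=-\sqrt{\eps}\bigl(\frac{\sqrt{p'(\rho)}}{\rho}+\frac{p''(\rho)}{2\sqrt{p'(\rho)}}\bigr)<0$, so $\lambda_{-}^{\eps}$ increases iff $\rho$ decreases, yielding the restriction $\rho<\hat\rho$ on the $1$-rarefaction curve. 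An identical computation with opposite sign gives $\rho>\hat\rho$ for the $2$-rarefaction curve. The main obstacle I anticipate is in (3) when $\gamma>1$: one must be careful to keep the singular integral near $\rho^{\ast}$ under control and to match the powers correctly, but once the asymptotic $\rho^{\ast}-\hat\rho^{\eps}=O(\eps^{1/\gamma})$ is exploited, everything falls into place.
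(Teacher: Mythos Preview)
The paper omits the proof entirely, stating that it ``is classical and omitted''. Your argument is the natural one and is essentially correct: you work directly from the explicit formula (\ref{Eq:integral_curve}), compute the second derivative to get concavity/convexity, use the growth of $p$ near $0$ and near $\rho^{\ast}$ for the boundary behaviour, and invoke the standard monotonicity-of-eigenvalue criterion for the rarefaction selection in (4). The computation of $(i_{\pm}^{\eps})''$ and of $d\lambda_{\pm}^{\eps}/d\rho$ along the integral curve is right, and the power-counting in (3) for $\gamma>1$ is exactly what is needed.

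One small slip: in your treatment of (3) for $\gamma<1$ you say that the resulting $O(\sqrt{\eps})$ bound is ``stronger'' than $O(\eps^{1/(2\gamma)})$. That is backwards: for $\gamma<1$ one has $1/(2\gamma)>1/2$, so $\eps^{1/(2\gamma)}$ decays \emph{faster} than $\sqrt{\eps}$, and $O(\sqrt{\eps})$ is in fact \emph{weaker}. In other words, the sharp exponent $1/(2\gamma)$ in the statement really only comes out for $\gamma>1$ (and at $\gamma=1$ there is a logarithmic correction). This is a cosmetic issue with the proposition as stated rather than with your method; the paper uses $\gamma=2$ throughout and the case $\gamma>1$ is the relevant one for the congestion asymptotics, so your main computation covers what is needed.
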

\noindent The last point of this proposition stems from the compatibility conditions of the characteristic speeds. The proof of this proposition is classical and omitted. 

A \textbf{shock wave} is a discontinuity between two constant states, $(\hat\rho,\hat q)$ and $(\rho,q)$, travelling at constant speed $\sigma$. The states $(\rho,q)$, which can be connected to $(\hat\rho,\hat q)$ by a shock wave, are determined by the Rankine-Hugoniot conditions:
\begin{equation}
[q] = \sigma[\rho],\quad \left[\frac{q^{2}}{\rho} + \eps p(\rho)\right] = \sigma[q],\label{Eq:Hugoniot}
\end{equation}
where $[f]:= f - \hat f$ for all quantities $f$. Easy computations show that the admissible states are of the form $(\rho,h_{\pm}^{\eps}(\rho))$, where $h_{\pm}^{\eps}$ is : 
\begin{equation}
h_{\pm}^{\eps}(\rho) = \hat u \pm \sqrt{\frac{\rho}{\hat\rho}}\sqrt{(\rho - \hat\rho)(\eps p(\rho) - \eps p(\hat\rho))}.\label{Eq:hugoniot_curve}
\end{equation}
and the shock speeds are:
\begin{equation}
\sigma_{\pm} = \frac{(h_{\pm}^{\eps}(\rho) - \hat q)}{(\rho - \hat\rho)} = \hat u \pm \sqrt{\frac{\rho}{\hat\rho}}\sqrt{\frac{(\eps p(\rho) - \eps p(\hat\rho))}{(\rho - \hat\rho)}}.\label{Eq:shock_speed}
\end{equation}
The graph of $h_{-}^{\eps}$ (resp. $h_{+}^{\eps}$) is called the 1-Hugoniot curve (resp. the 2-Hugoniot curve). The following proposition provides several properties of the Hugoniot curves:
\begin{Prop} \textbf{(Hugoniot and shock curves)}
\begin{enumerate}
\item The 1-Hugoniot function $h_{-}^{\eps}$ (resp. the 2-Hugoniot function $h_{+}^{\eps}$) is concave (resp. convex) and $h_{-}^{\eps}(0) = h_{+}^{\eps}(0) = 0$.
\item The limits of the graphs of $h_{-}^{\eps}$ and $h_{+}^{\eps}$ when $\eps$ goes to zero are the union of the straight lines $\left\{ q = \rho \hat u\right\}$ and $\left\{\rho = \rho^{\ast}\right\}$. This is true also when $\hat\rho  = \hat\rho^{\eps}$ depends on $\eps$ and tends to $\rho^{\ast}$. 
\item If $(\hat\rho,\hat q)$ is a left state, the right states which can be connected to it by an entropic shock wave are those located on the 1-shock curve $\left\{(\rho,h_{-}^{\eps}(\rho)),\ \rho > \hat\rho\right\}$ or the 2-shock curve $\left\{(\rho,h_{+}^{\eps}(\rho)),\ \rho < \hat\rho\right\}$.
\end{enumerate}
\label{Prop:Shock_waves}
\end{Prop}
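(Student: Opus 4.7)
The plan is to mirror the structure of the proof of Proposition \ref{Rarefaction_wave}, which is already analogous to what we need here. Throughout, I will work with the auxiliary function $H(\rho) := (\rho - \hat\rho)\,\eps(p(\rho) - p(\hat\rho))$, which vanishes to second order at $\rho = \hat\rho$ (because both factors vanish there), is non-negative on $[0,\rho^*)$ (both factors have the same sign), and satisfies $H'(\hat\rho) = 0$, $H''(\hat\rho) = 2\eps p'(\hat\rho) > 0$. Then $h_{\pm}^{\eps}(\rho) = \rho\hat u \pm \sqrt{\rho/\hat\rho}\,\sqrt{H(\rho)}$, where I read the first summand as $\rho\hat u$ rather than $\hat u$ so that $h_{\pm}^{\eps}(\hat\rho) = \hat q$ and $h_{\pm}^{\eps}(0)=0$ are consistent with the Rankine--Hugoniot derivation \eqref{Eq:Hugoniot}.

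For item 1, the fact that $h_{\pm}^{\eps}(0) = 0$ follows at once, since $p(0)=0$ and the prefactor $\sqrt{\rho/\hat\rho}$ kills the square root at $\rho = 0$. For convexity/concavity, I would differentiate twice and factor out $H(\rho)^{-3/2}$ (taking the limit carefully at $\hat\rho$ via the expansion $H(\rho) = \eps p'(\hat\rho)(\rho-\hat\rho)^2 + O((\rho-\hat\rho)^3)$). The resulting sign is governed by combinations of $p'(\rho)$, $p''(\rho)$ and the positive quantity $(p(\rho)-p(\hat\rho))/(\rho-\hat\rho)$; since $p$ is strictly increasing and strictly convex on $(0,\rho^*)$ for our choice of $p$, these combinations are of definite sign, yielding concavity of $h_{-}^{\eps}$ and convexity of $h_{+}^{\eps}$.

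For item 2, I would fix $\rho < \rho^*$ and observe that $\eps p(\rho) \to 0$ as $\eps \to 0$, while $\eps p(\hat\rho^{\eps})$ is bounded (either $\to 0$ if $\hat\rho < \rho^*$ is fixed, or $\to \bar{p}$ finite if $\hat\rho^{\eps} \to \rho^*$). In either situation $\sqrt{H(\rho)} \to 0$ at fixed $\rho < \rho^*$, so $h_{\pm}^{\eps}(\rho) \to \rho\hat u$, which traces out the straight line $\{q = \rho\hat u\}$. The complement of this convergence comes from $\rho$ approaching $\rho^*$, where $\eps p(\rho)$ may remain of order $1$; a uniform estimate analogous to item 3 of Proposition \ref{Rarefaction_wave}, namely $|h_{\pm}^{\eps}(\rho)/\rho - \hat u| = O(\eps^{1/(2\gamma)})$ for $\rho \le \hat\rho^{\eps}$, captures this and identifies the second line $\{\rho = \rho^*\}$ in the limit graph.

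For item 3, I would invoke the Lax entropy condition: a 1-shock from $(\hat\rho,\hat q)$ to $(\rho,h_{-}^{\eps}(\rho))$ with speed $\sigma_{-}$ given by \eqref{Eq:shock_speed} is admissible iff $\lambda_{-}^{\eps}(\hat\rho,\hat q) > \sigma_{-} > \lambda_{-}^{\eps}(\rho,h_{-}^{\eps}(\rho))$, which reduces to comparing the chord slope $(p(\rho)-p(\hat\rho))/(\rho-\hat\rho)$ (weighted by $\rho/\hat\rho$ and $\hat\rho/\rho$) with $p'(\hat\rho)$ and $p'(\rho)$. Strict convexity of $p$ then forces $\rho > \hat\rho$, and the analogous computation for the 2-shock forces $\rho < \hat\rho$. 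The main obstacle I expect is the item 2 limit when $\hat\rho = \hat\rho^{\eps}$ depends on $\eps$: one needs uniform control on $\sqrt{\rho/\hat\rho^{\eps}}\sqrt{(\rho-\hat\rho^{\eps})\eps(p(\rho)-p(\hat\rho^{\eps}))}$ as both $\eps\to 0$ and $\hat\rho^{\eps}\to\rho^*$, which I would handle by splitting according to whether $\rho$ is bounded away from $\rho^*$ or approaches it, using the $\eps^{1/\gamma}$-scaling $\rho^* - \hat\rho^{\eps} = O(\eps^{1/\gamma})$ inherited from the singular pressure law.
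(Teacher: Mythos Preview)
The paper does not prove this proposition at all: it simply states ``The proof of this proposition is classical and omitted.'' So there is no argument to compare your proposal against, and any correct fill-in of the details is acceptable.

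Your proposal is a reasonable sketch of the standard proof and would work. A few remarks. First, you are right that formula \eqref{Eq:hugoniot_curve} contains a typo: the first summand must be $\rho\hat u$, not $\hat u$, exactly as you inferred from the Rankine--Hugoniot relation $q = \hat q + \sigma_{\pm}(\rho-\hat\rho)$ combined with \eqref{Eq:shock_speed}; without this correction neither $h_{\pm}^{\eps}(\hat\rho)=\hat q$ nor $h_{\pm}^{\eps}(0)=0$ would hold. Second, for item 1, the brute-force second-derivative computation is feasible but messy; a slightly cleaner route is to write $h_{\pm}^{\eps}(\rho) - \rho\hat u = \pm\,\mathrm{sgn}(\rho-\hat\rho)\sqrt{\rho H(\rho)/\hat\rho}$ and check convexity of $\rho\mapsto\rho(\rho-\hat\rho)(p(\rho)-p(\hat\rho))$ together with monotonicity, which reduces the sign analysis to the convexity of $p$ that you already invoke. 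Third, for item 2 when $\hat\rho^{\eps}\to\rho^*$, your splitting into $\rho$ bounded away from $\rho^*$ versus $\rho\to\rho^*$ is the right idea; note that in the first regime $(\rho-\hat\rho^{\eps})$ stays bounded and $\eps(p(\rho)-p(\hat\rho^{\eps}))\to -\bar p$ is bounded, so the square-root factor is $O(1)$ but multiplied by nothing small---you need instead that $\rho-\hat\rho^{\eps}\to\rho-\rho^*$ is bounded while $\eps p(\rho)\to 0$ and $\eps p(\hat\rho^{\eps})\to\bar p$, giving a finite nonzero limit only on the vertical line, which is what you want. Your item 3 via Lax's condition and the convexity of $p$ is the textbook argument.
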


\noindent The proof of this proposition is classical and omitted.

\begin{figure}
\begin{center}
\includegraphics[viewport=157 370 568 707, scale=0.5]{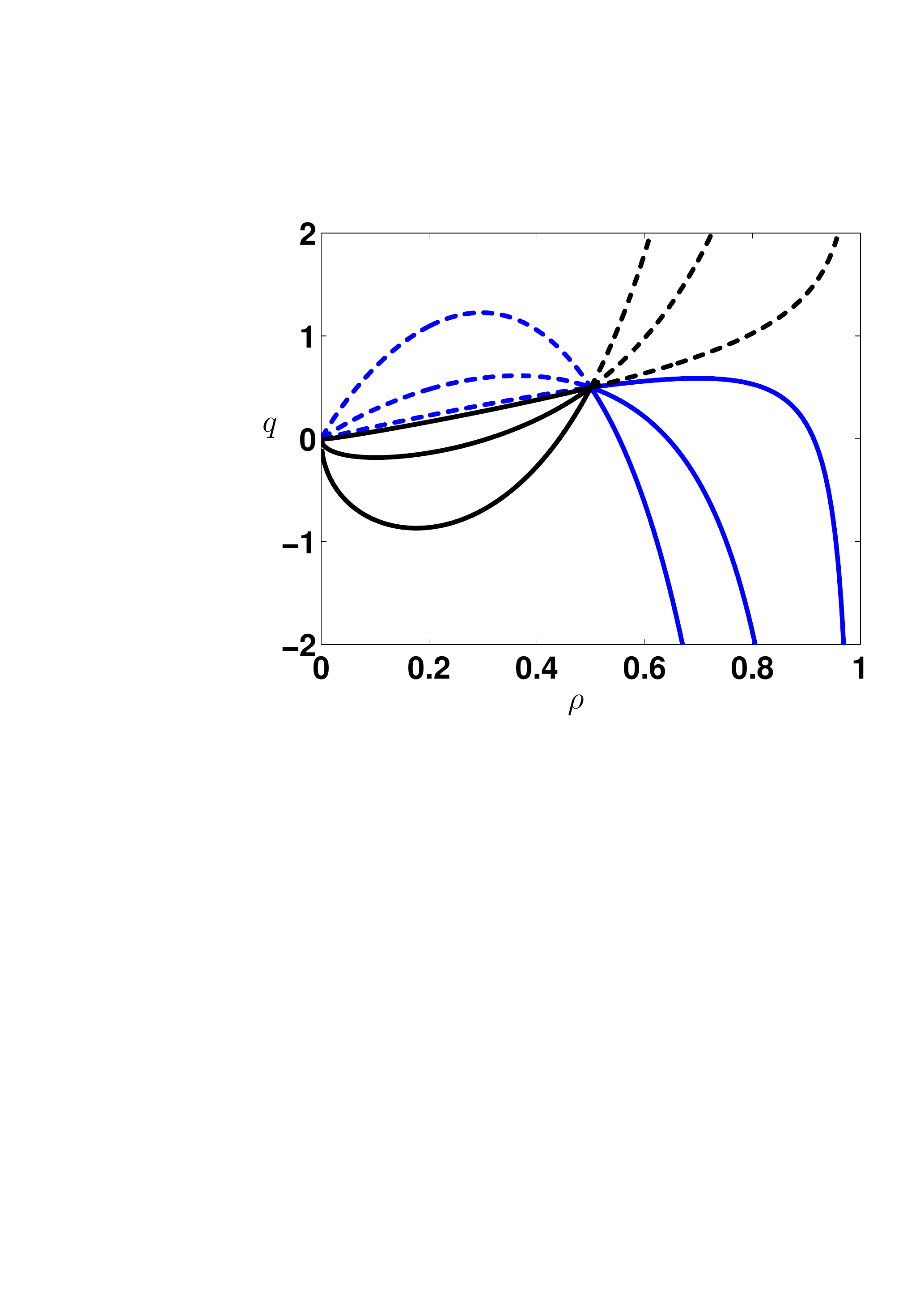}
\caption{Curves issued from $(\hat\rho,\hat q) = (0.5,0.5)$ and $\eps \in \left\{10,1,10^{-2}\right\}$. In dotted line : the rarefaction curves. In solid line : the shock curves. In blue : the 1-curves. In black : the 2-curves. As $\eps \rightarrow 0$, the curves are tending to the straight lines $\left\{q = \hat u\rho\right\}$ and $\left\{\rho = \rho^{\ast}\right\}$. Parameters : $k = 2, \rho^{\ast} = 1$.}
\label{wavecurve_NumCongestion}
\end{center}
\end{figure}

\subsubsection{The Riemann problem for system (\ref{Eq:rho_1D_eps})-(\ref{Eq:q_1D_eps})}

Geometric considerations on the intersection of the integral and Hugoniot curves enable us to solve the Riemann problem \cite{leveque_book_2002}. These arguments are really simplified in the limit $\eps \rightarrow 0$, due to the much simpler behaviour of the integral and Hugoniot curves: they both converge to union of the straight lines $\left\{q = \rho \hat u\right\}$ and $\left\{\rho = \rho^{\ast}\right\}$. This behaviour is illustrated in Fig.~\ref{wavecurve_NumCongestion}. The following proposition provides the nature of the solutions of the Riemann problem for $\eps$ small enough. It depends on the sign of the relative velocity $u_{\ell} - u_{r}$, where $u_{\ell} = q_{\ell}/\rho_{\ell}$ and $u_{r} = q_{r}/\rho_{r}$:

\begin{Prop} Let $(\rho_{\ell},q_{\ell})$, $(\rho_{r},q_{r})$, left and right states. Then the solutions of the Riemann problem related to (\ref{Eq:rho_1D_eps})-(\ref{Eq:q_1D_eps}) have the following forms:
\begin{enumerate}
\item If $u_{\ell} < u_{r}$, then for $\varepsilon$ small enough, the intermediate state is the intersection point of the 1-integral curve issued from $(\rho_{\ell},q_{\ell},\bar{p}_{\ell})$ and the 2-integral curve issued from $(\rho_{r},q_{r},\bar{p}_{r})$. Besides, the intermediate density $\tilde\rho$ is lower than $\rho_{r}$ and $\rho_{\ell}$. This is valid even if $\rho_{\ell}$ or/and $\rho_{r}$ tends to $\rho^\ast$.
\item If $u_{\ell} > u_{r}$, then for $\eps$ small enough, there are two subcases:
\begin{itemize}
\item if $h_{-}^{\eps}(\rho_{r}^{\eps}) > q_{r}$ and $h_{+}^{\eps}(\rho_{\ell}^{\eps}) < q_{\ell}$, the intermediate state is the intersection point of the 1-Hugoniot curve issued from $(\rho_{\ell},q_{\ell})$ and the 2-Hugoniot curve issued from $(\rho_{r},q_{r})$. 
\item if $h_{-}^{\eps}(\rho_{r}^{\eps}) < q_{r}$ (resp. $h_{+}^{\eps}(\rho_{\ell}^{\eps}) > q_{\ell}$), the intermediate state is the intersection point of the 1-Hugoniot curve (resp. 1-integral curve) issued from $(\rho_{\ell},q_{\ell})$ and the 2-integral curve (resp. 2-Hugoniot curve) issued from $(\rho_{r},q_{r})$.
\end{itemize}
\item If $u_{r} = u_{\ell}$ and $\rho_{\ell} < \rho_{r}$ (resp. $\rho_{\ell} > \rho_{r}$), then the intermediate state is the intersection point of the 1-Hugoniot curve (resp. 1-integral curve) issued from $(\rho_{\ell},q_{\ell},\bar{p}_{\ell})$ and the 2-integral curve (resp. 2-Hugoniot curve) issued from $(\rho_{r},q_{r},\bar{p}_{r})$. Besides, the intermediate density $\tilde\rho$ is the interval $[\rho_{\ell},\rho_{r}]$ (resp. $[\rho_{r},\rho_{\ell}]$). This is valid even if $\rho_{\ell}$ or/and $\rho_{r}$ tends to $\rho^\ast$.
\end{enumerate}
\label{Prop:Riemann_pb}
\end{Prop}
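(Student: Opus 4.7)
The plan is to analyze the intersection of the forward 1-wave curve issued from the left state $(\rho_\ell,q_\ell)$ with the backward 2-wave curve issued from the right state $(\rho_r,q_r)$. By the standard Riemann solver construction for strictly hyperbolic $2\times 2$ systems with genuinely non linear fields, the intermediate state $(\tilde\rho,\tilde q)$ is determined by this intersection, and the sign of $\tilde\rho-\rho_\ell$ (respectively $\tilde\rho-\rho_r$) decides whether the left (respectively right) wave is a shock or a rarefaction. All the geometric information needed has been packaged in Propositions 1 and 2.

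First, I would assemble the two composite wave curves. The forward 1-wave curve issued from $(\rho_\ell,q_\ell)$ is the concatenation of the 1-rarefaction branch $\{(\rho,i_-^\varepsilon(\rho)):\rho\le\rho_\ell\}$ with the 1-shock branch $\{(\rho,h_-^\varepsilon(\rho)):\rho\ge\rho_\ell\}$, and symmetrically for the right state using $i_+^\varepsilon$ and $h_+^\varepsilon$. From Propositions 1 and 2 these curves are $C^1$ across the base point, strictly monotone in $\rho$ (the 1-curve decreasing, the 2-curve increasing), and concave/convex as functions of $\rho$. As $\varepsilon\to 0$ each of them converges, uniformly on $[0,\rho^\ast]$, to the two-segment polygonal line $\{q=\rho u_\ell\}\cup\{\rho=\rho^\ast\}$ (respectively $\{q=\rho u_r\}\cup\{\rho=\rho^\ast\}$); this convergence still holds when the base point itself drifts to $\rho^\ast$, thanks to item 3 of Proposition 1 and item 2 of Proposition 2.

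Next, I would localize the intersection by passing to the limit. When $u_\ell<u_r$ the limiting polygonal curves meet only on the sloped rays, at a point with $\tilde\rho<\min(\rho_\ell,\rho_r)$; the implicit function theorem together with the strict monotonicity of the wave curves then gives, for $\varepsilon$ small enough, a unique intersection close to the limit, lying on the 1-integral and 2-integral branches, which is case 1. When $u_\ell>u_r$ the limiting intersection is forced onto the segment $\{\rho=\rho^\ast\}$, so $\tilde\rho$ is close to $\rho^\ast$ and in particular greater than $\min(\rho_\ell,\rho_r)$; whether the left branch used is the 1-shock or the 1-integral curve is then decided by the sign of $q_r-h_-^\varepsilon(\rho_r)$, which tests on which side of the 1-Hugoniot curve from $(\rho_\ell,q_\ell)$ the right state lies, and symmetrically for the right branch via the sign of $q_\ell-h_+^\varepsilon(\rho_\ell)$; this produces the three subcases of item 2. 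Case 3 is a degenerate situation where the two limiting polygonal curves coincide along an entire horizontal segment; I would treat it as a limit of case 1 and case 2, recovering the contact-type solution with intermediate density in the interval bracketed by $\rho_\ell$ and $\rho_r$.

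The main obstacle will be the regime where one or both of $\rho_\ell,\rho_r$ tends to $\rho^\ast$ as $\varepsilon\to 0$: the base points of the wave curves then move with $\varepsilon$, and the existence, uniqueness and location of the intersection must be extracted from the sharp convergence estimates of Propositions 1 and 2, rather than from a simple continuity argument. A secondary technical point is the verification of the Lax entropy condition for the shocks selected in case 2, which reduces to checking that $\tilde\rho>\rho_\ell$ on the 1-shock branch and $\tilde\rho<\rho_r$ on the 2-shock branch, an immediate by-product of item 3 of Proposition 2 combined with the localization of $\tilde\rho$ near $\rho^\ast$.
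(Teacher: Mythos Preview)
Your proposal is correct and follows the same geometric intersection argument the paper has in mind: the paper omits the proof entirely, stating only that it ``results from propositions~\ref{Rarefaction_wave} and \ref{Prop:Shock_waves}'' and is ``an easy adaptation of the proof of Theorem~1 in \cite{2010_Congestion}''. One small imprecision: you claim the composite wave curves are strictly monotone in $\rho$, but Propositions~\ref{Rarefaction_wave} and~\ref{Prop:Shock_waves} only give concavity/convexity and the common value $0$ at $\rho=0$; uniqueness of the intersection should be argued from the concave/convex structure rather than from monotonicity.
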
 
\noindent This proposition results from propositions~\ref{Rarefaction_wave} and \ref{Prop:Shock_waves}. The proof is omitted here (it is an easy adaptation of the proof of Theorem 1 in \cite{2010_Congestion}). Note that the nature of the curves (integral or Hugoniot curve) implies the nature of the waves involved in the Riemann problem.

\subsubsection{The limits of solutions of the Riemann problem.}

We introduce the following initial conditions:
\begin{equation*}
(\rho_{0}^{\eps},q_{0}^{\eps})(x) = \left\{\begin{array}{cl} (\rho_{\ell}^{\eps},q_{\ell}^{\eps}), \text{ for } x < 0,\\
(\rho_{r}^{\eps},q_{r}^{\eps}), \text{ for } x \geq 0.
\end{array}\right.
\end{equation*} 
with $(\rho_{\ell}^{\eps},q_{\ell}^{\eps},\eps p(\rho_{\ell}^{\eps})) \rightarrow (\rho_{\ell},q_{\ell},\bar{p}_{\ell})$ and  $(\rho_{r}^{\eps},q_{r}^{\eps},\eps p(\rho_{r}^{\eps})) \rightarrow (\rho_{r},q_{r},\bar{p}_{r})$ as $\eps$ goes to zero.

The following proposition provides the solution when $\rho_{\ell}, \rho_{r} < \rho^{\ast}$  and so $\bar{p}_{\ell} = \bar{p}_{r} = 0$. The nature of the solution only depends on the sign of the relative velocity $u_{\ell} - u_{r}$, where $u_{\ell} = q_{\ell}/\rho_{\ell}$ and $u_{r} = q_{r}/\rho_{r}$.
\begin{Prop} \textbf{(case $\rho_{\ell} < \rho^{\ast}, \rho_{r} < \rho^{\ast}$)}
\label{Prop:Riemann_problem_limit1}
\begin{enumerate}
	\item If $u_{\ell} - u_{r} < 0$, then the solution consists of two contact waves connecting the two states to the vacuum. This is summarised in the following diagram:
	\begin{equation*}
	(\rho_{\ell},q_{\ell},0)\quad \stackrel{\mbox{contact}}{\longrightarrow}\quad(0,q_{\ell},0) \stackrel{\mbox{vacuum}}{\longrightarrow}\quad(0,q_{r},0)\quad \stackrel{\mbox{contact}}{\longrightarrow}\quad(\rho_{r},q_{r},0).
	\end{equation*}
	\item If $u_{\ell} - u_{r} > 0$, then the solution consists of two shock waves connecting the left state $(\rho_{\ell},q_{\ell},0)$ to an intermediate state $(\rho^{\ast},\tilde{q},\bar{p})$ and then $(\rho^{\ast},\tilde{q},\bar{p})$ to the right state $(\rho_{r},q_{r},0)$: 
	\begin{equation*}
	(\rho_{\ell},q_{\ell},0)\quad \stackrel{\mbox{shock}}{\longrightarrow}\quad(\rho^{\ast},\tilde{q},\bar{p})\quad \stackrel{\mbox{shock}}{\longrightarrow}\quad(\rho_{r},q_{r},0), 
	\end{equation*}
	where the intermediate momentum $\tilde{q}$ and the intermediate pressure $\bar{p}$ are: 
	\begin{eqnarray*}
&&\tilde{q} = u_{\ell}\rho^{\ast} - \sqrt{\frac{\rho^{\ast}}{\rho_{\ell}}}\sqrt{(\rho^{\ast} - \rho_{\ell})\bar{p}} = u_{r}\rho^{\ast} + \sqrt{\frac{\rho^{\ast}}{\rho_{r}}}\sqrt{(\rho^{\ast} - \rho_{r})\bar{p}},\\
&&\bar{p} = \left(u_{\ell} - u_{r}\right)^2\left(\sqrt{\frac{\rho^{\ast} - \rho_{\ell}}{\rho_{\ell}\rho^{\ast}}} + \sqrt{\frac{\rho^{\ast} -\rho_{r}}{\rho_{r}\rho^{\ast}}}\right)^{-2},
\end{eqnarray*}
and the shock speeds $\sigma_{-}$ and $\sigma_{+}$ are given by:
\begin{equation*}
\sigma_{-} = u_{\ell} - \sqrt{\frac{\rho^{\ast}}{\rho_{\ell}(\rho^{\ast} - \rho_{\ell})}}\sqrt{\bar{p}},\quad \sigma_{+} = u_{r} + \sqrt{\frac{\rho^{\ast}}{\rho_{r}(\rho^{\ast} - \rho_{r})}}\sqrt{\bar{p}}.
\end{equation*}
	\item If $u_{\ell} = u_{r}$, then the solution consists of only one contact wave connecting $(\rho_{\ell},q_{\ell},0)$ to $(\rho_{r},q_{r},0)$:  
	\begin{equation*}
	(\rho_{\ell},q_{\ell},0)\quad \stackrel{\mbox{contact}}{\longrightarrow}\quad(\rho_{r},q_{r},0).  
	\end{equation*}
\end{enumerate}
\end{Prop}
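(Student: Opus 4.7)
The plan is to obtain each of the three claims by passing to the limit $\eps \to 0$ in the corresponding case of Proposition \ref{Prop:Riemann_pb}, using the limit behaviour of the integral and Hugoniot curves established in Propositions \ref{Rarefaction_wave}(2)--(3) and \ref{Prop:Shock_waves}(2). For each case, I first identify the wave configuration of the $\eps$-indexed solution, then pass to the limit on the intermediate state, and finally read off the wave speeds from \eqref{Eq:shock_speed}.

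For Case 1 ($u_\ell < u_r$), the $\eps$-solution consists of a 1-rarefaction and a 2-rarefaction meeting at an intermediate state $(\tilde\rho^\eps, \tilde q^\eps)$ lying at the intersection of the 1-integral curve from $(\rho_\ell,q_\ell)$ and the 2-integral curve from $(\rho_r,q_r)$, with $\tilde\rho^\eps < \min(\rho_\ell,\rho_r)$. The uniform bound $|i_\pm^\eps(\rho)/\rho - \hat u| = O(\eps^{1/(2\gamma)})$ from Proposition \ref{Rarefaction_wave}(3) forces $\tilde q^\eps/\tilde\rho^\eps$ to approach both $u_\ell$ and $u_r$ simultaneously, which is only compatible with $\tilde\rho^\eps \to 0$ since $u_\ell \ne u_r$. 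This yields the vacuum intermediate state with two contact discontinuities at speeds $u_\ell$ and $u_r$. Case 3 ($u_\ell = u_r$) is handled by the same philosophy restricted to whichever shock/rarefaction configuration Proposition \ref{Prop:Riemann_pb}(3) selects: the common velocity collapses both the integral and Hugoniot curves to the single ray $\{q = \rho u_\ell\}$ in the limit, so the two waves merge into one contact discontinuity of speed $u_\ell$.

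For Case 2 ($u_\ell > u_r$), the $\eps$-solution is made of two shocks meeting at $(\tilde\rho^\eps, \tilde q^\eps)$, intersection of the 1-Hugoniot from $(\rho_\ell,q_\ell)$ and the 2-Hugoniot from $(\rho_r,q_r)$, with $\tilde\rho^\eps > \max(\rho_\ell,\rho_r)$. By Proposition \ref{Prop:Shock_waves}(2) the two Hugoniot graphs tend to $\{q = \rho u_\ell\} \cup \{\rho = \rho^*\}$ and $\{q = \rho u_r\} \cup \{\rho = \rho^*\}$, and since $u_\ell > u_r$ the intersection must localise on the vertical branch, so $\tilde\rho^\eps \to \rho^*$. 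The standing assumption that $\eps p$ stays bounded yields $\eps p(\tilde\rho^\eps) \to \bar p \in [0,\infty)$. Substituting $\rho = \tilde\rho^\eps \to \rho^*$ in both Hugoniot formulas \eqref{Eq:hugoniot_curve} produces the two relations for $\tilde q$ announced in the statement; subtracting them eliminates $\tilde q$ and gives a single linear equation in $\sqrt{\bar p}$ with solution the stated formula (positive by $u_\ell > u_r$), and then $\tilde q$ follows from either relation. The shock speeds come directly from \eqref{Eq:shock_speed} in the same limit.

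The main obstacle is the rigorous localisation argument in Case 2: one must exclude that the two Hugoniot branches meet at a point whose limit density is strictly below $\rho^*$, in which case one would wrongly obtain $\bar p = 0$ and a contradiction with $u_\ell > u_r$. I would handle this via the monotonicity and concavity/convexity provided by Proposition \ref{Prop:Shock_waves}(1): on any compact interval $[\max(\rho_\ell,\rho_r), \rho^* - \eta]$ the difference $h_-^\eps(\rho) - h_+^\eps(\rho)$ tends uniformly to the linear function $\rho(u_\ell - u_r) > 0$, so the intersection cannot lie in that interval for $\eps$ small. Pushing $\eta \to 0$ corners $\tilde\rho^\eps$ into any neighbourhood of $\rho^*$, which justifies taking the limit along the explicit Hugoniot formulas as done above.
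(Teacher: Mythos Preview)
Your proposal is essentially correct and follows the same strategy as the paper: identify the wave configuration via Proposition~\ref{Prop:Riemann_pb}, then pass to the limit on the intermediate state using the limit behaviour of the integral and Hugoniot curves. Two minor remarks. In Case~1 your invocation of Proposition~\ref{Rarefaction_wave}(3) is slightly imprecise, since that item is stated under the hypothesis $\hat\rho^\eps\to\rho^\ast$; here $\hat\rho=\rho_\ell,\rho_r$ are fixed strictly below $\rho^\ast$, so the bound is actually $O(\sqrt\eps)$ and follows directly from~\eqref{Eq:integral_curve} with $P(\rho)$ bounded on $[\,\tilde\rho^\eps,\hat\rho\,]$ --- this is exactly how the paper argues. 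In Case~2 you localise $\tilde\rho^\eps\to\rho^\ast$ geometrically (uniform convergence of $h_-^\eps-h_+^\eps$ on compact subintervals of $[\,\max(\rho_\ell,\rho_r),\rho^\ast)$), whereas the paper proceeds algebraically: equating the two Hugoniot formulas and dividing by $\tilde\rho$ yields
\[
u_\ell-u_r=\sqrt{\tfrac{[\rho]_\ell[\eps p(\rho)]_\ell}{\rho_\ell\tilde\rho}}+\sqrt{\tfrac{[\rho]_r[\eps p(\rho)]_r}{\rho_r\tilde\rho}},
\]
which simultaneously shows that $\eps p(\tilde\rho^\eps)$ has a finite nonzero limit (hence $\tilde\rho^\eps\to\rho^\ast$) without appealing to any standing boundedness assumption. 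Both routes are valid; the paper's is slightly more direct and delivers the finiteness of $\bar p$ for free.
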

\noindent It is the same results as those obtained in \cite{2PhaseFlow_Bouchut_al}, where they directly proved it by defining a notion of entropy solutions for the asymptotic problem.

In all the following proofs, $(\rho_{\ell},q_{\ell})$, $(\tilde\rho,\tilde q)$, $(\rho_{r},q_{r})$ will respectively denote the left, the intermediate and the right states involved in each different Riemann problems. $\lambda_{-}^{\eps}$ (resp. $\lambda_{+}^{\eps}$) will implicitly refer to the first (resp. second) characteristic speed of the left state (resp. right state). The characteristic speeds related to the intermediate state are denoted: $\tilde{\lambda}_{-}^{\eps}$, $\tilde{\lambda}_{+}^{\eps}$.  In the following, $i_{-}^{\eps}$ and $h_{-}^{\eps}$ (resp. $i_{+}^{\eps}$ and $h_{+}^{\eps}$) will refer to the 1-curves (resp. 2-curves) issued from the left state (resp. right state). The notation   $[f]_{\ell} = \tilde f - f_{\ell}$ (resp. $[f]_{r} = \tilde f - f_{r}$) will denote the difference between the intermediate and the left (resp. the right) values of any quantity $f$. 

\begin{proof} 
1. From proposition~\ref{Prop:Riemann_pb}, the solution for small $\eps$ consists of two rarefaction waves. The intermediate density solves equation $i_{-}^{\eps}(\tilde\rho) = i_{+}^{\eps}(\tilde\rho)$, that is:
\begin{equation*}
\tilde\rho u_r \pm \tilde\rho\sqrt{\eps}[P(\rho)]_{r} = \tilde\rho u_\ell \pm \tilde\rho\sqrt{\eps}[P(\rho)]_{\ell}
\end{equation*}
Since $\tilde\rho$ is lower than $\rho_{\ell}$ and $\rho_{r}$ (and then $[P(\rho)]_{r}$ and $[P(\rho)]_{\ell}$ are bounded) and $u_r - u_\ell$ is not zero, it is easy to deduce that the limit solution of this equation is $\tilde\rho = 0$, which defines a vacuum state. Besides, $\lambda_{-}^{\eps}$ and $\tilde{\lambda}_{-}^{\eps}$ tends to $u_{\ell}$ (resp. $\tilde{\lambda}_{+}^{\eps}$ and  $\lambda_{+}^{\eps}$ tends to $u_{r}$). Therefore, the limit waves are contact waves.

2. From proposition~\ref{Prop:Riemann_pb}, the solution for small $\eps$ consists of two shock waves (since $\lim h_{+}^{\eps}(\rho_\ell) = \rho_\ell u_r$ and $\lim_{-}^{\eps} h_{-}^{\eps}(\rho_\ell)  = \rho_r q_\ell$).  We have $h_{-,\ell}^{\eps}(\tilde\rho) = h_{+,r}^{\eps}(\tilde\rho)$, that is:
\begin{equation*}
\tilde{\rho}u_{\ell} - \sqrt{\frac{\tilde{\rho}}{\rho_{\ell}}}\sqrt{[\rho]_{\ell}[\eps p(\rho)]_{\ell}} = \tilde{\rho}u_{r} + \sqrt{\frac{\tilde{\rho}}{\rho_{r}}}\sqrt{[\rho]_{r}[\eps p(\rho)]_{r}},
\end{equation*}
which yields
\begin{equation*}
\left(u_{\ell} - u_{r}\right) = \sqrt{\frac{[\rho]_{\ell}[\eps p(\rho)]_{\ell}}{\rho_{\ell}\tilde{\rho}}} + \sqrt{\frac{[\rho]_{r}[\eps p(\rho)]_{r}}{\rho_{r}\tilde{\rho}}},
\end{equation*}
Since $u_{\ell} - u_{r}$ is different from zero, the limit intermediate pressure,
$\lim \eps p(\tilde\rho^{\eps}) = \lim [\eps p(\rho)]_{\ell}$ $ = \lim [\eps p(\rho)]_{r}$, 
is not zero. Thus $\tilde\rho \rightarrow \rho^{\ast}$ as $\eps \rightarrow 0$. Finally, the limit values of the two shock speeds can be easily inferred from equation (\ref{Eq:shock_speed}).

3. Supposing that $\rho_{\ell} < \rho_{r}$,  then the intermediate density satisfies $\rho_{\ell} \leq \tilde{\rho}^{\eps} \leq \rho_{r}$ and consequently $\lambda_{-}^{\eps},\tilde{\lambda}_{\pm}^{\eps},\lambda_{+}^{\eps} \rightarrow u_{\ell} = u_{r}$ as $\eps$ goes to zero, which yields a unique contact wave.
\end{proof}

%

We now consider the case where the left state is a congested state: $\rho_{\ell} = \rho^{\ast}, \rho_{r} < \rho^{\ast}$ and $\bar{p}_{\ell} < + \infty$. By a symmetry argument, the case $\rho_{\ell} < \rho^{\ast}, \rho_{r} = \rho^{\ast}$ can be easily deduced. The limits of rarefactions waves when one state tends to congestion lead to the so-called declustering waves:
\begin{Def} \textbf{(Declustering waves)} A declustering wave consists of a shock wave between two congested state, with infinite speed, and with a zero pressure for positive time.
\label{Def:Declustering}
\end{Def}
\noindent The following proposition states the solutions of the Riemann problem.
\begin{Prop} \textbf{(case $\rho_{\ell} = \rho^{\ast}, \rho_{r} < \rho^{\ast}$)}
\label{Prop:Riemann_problem_limit2}
\begin{enumerate}
	\item If $u_{\ell} - u_{r} < 0$, then the solution consists of one declustering wave connecting the left state $(\rho^{\ast},q_{\ell},\bar{p}_{\ell})$ to a congested and pressureless state $(\rho^{\ast},q_{\ell},0)$ and then a contact wave connecting $(\rho^{\ast},q_{\ell},0)$ to vacuum and another contact wave connecting vacuum to the right state $(\rho_{r},q_{r},0)$:
\begin{footnotesize}	\begin{equation*}
	(\rho^{\ast},q_{\ell},\bar{p}_{\ell})\ \stackrel{\mbox{declust.}}{\longrightarrow}\ (\rho^{\ast},q_{\ell},0)\ \stackrel{\mbox{contact}}{\longrightarrow}\ (0,q_{\ell},0)\ \stackrel{\mbox{vacuum}}{\longrightarrow}\ (0,q_{r},0)\  \stackrel{\mbox{contact}}{\longrightarrow}\ (\rho_{r},q_{r},0).  
	\end{equation*}\end{footnotesize}
	\item If $u_{\ell} - u_{r} > 0$, then the solution consists of two shock waves connecting the left state $(\rho^{\ast},q_{\ell},\bar{p}_{\ell})$ to an intermediate congested state $(\rho^{\ast},\tilde{q},\bar{p})$ and then connecting this intermediate state to the right state speeds $(\rho_{r},q_{r},0)$: 
	\begin{equation*}
	(\rho^{\ast},q_{\ell},\bar{p}_{\ell})\quad \stackrel{\mbox{shock}}{\longrightarrow}\quad(\rho^{\ast},\tilde{q},\bar{p})\quad \stackrel{\mbox{shock}}{\longrightarrow}\quad(\rho_{r},q_{r},0),  
	\end{equation*}
	where the intermediate momentum $\tilde{q}$ and the intermediate pressure $\bar{p}$ are given by: 	
	\begin{equation*}
	\tilde{q} = \rho^{\ast}u_{\ell},\quad \bar{p} = \frac{\rho^{\ast}\rho_{r}}{\rho^{\ast} - \rho_{r}}(u_{\ell} - u_{r})^{2},
	\end{equation*}
	and the two speed $\sigma_{-}$ and $\sigma_{+}$ are:
	\begin{equation*}
	\sigma_{-} = - \infty,\quad \sigma_{+} = u_{r} + \sqrt{\frac{\rho^{\ast}}{\rho_{r}}}\sqrt{\frac{\bar{p}}{\rho^{\ast} - \rho_{r}}}. 
	\end{equation*}
	\item If $u_{\ell} = u_{r}$, then the solution consists of one declustering wave connecting the left state $(\rho^{\ast},q_{\ell},\bar{p}_{\ell})$ to the intermediate state $(\rho^{\ast},q_{\ell},0)$ and one contact wave connecting the intermediate state to the right state $(\rho_{r},q_{r},0)$: 
	\begin{equation*}
	(\rho^{\ast},q_{\ell},\bar{p}_{\ell})\quad \stackrel{\mbox{declust.}}{\longrightarrow}\quad(\rho^{\ast},q_{\ell},0)\quad \stackrel{\mbox{contact}}{\longrightarrow}\quad(\rho_{r},q_{r},0).  
	\end{equation*}  
\end{enumerate}
\end{Prop}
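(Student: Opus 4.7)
The strategy closely parallels the proof of Proposition~\ref{Prop:Riemann_problem_limit1}: for each of the three cases distinguished by the sign of $u_\ell - u_r$, apply Proposition~\ref{Prop:Riemann_pb} for $\eps$ small to identify the nature of the waves, and then pass to the limit using the asymptotic descriptions of integral and Hugoniot curves provided in Propositions~\ref{Rarefaction_wave} and~\ref{Prop:Shock_waves}. The key quantitative input is that whenever $\eps p(\rho^\eps) \to \bar p < +\infty$ we have $\rho^\ast - \rho^\eps = O(\eps^{1/\gamma})$, so square roots of the form $\sqrt{(\rho - \rho_\ell^\eps)(\eps p(\rho) - \eps p(\rho_\ell^\eps))}$ vanish as $\eps \to 0$ even when the pressure differences are of order one.

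For \textbf{case 2} ($u_\ell - u_r > 0$), I would first check that the curve-intersection conditions in Proposition~\ref{Prop:Riemann_pb} select two shocks for $\eps$ small, using the limit values of $h_-^\eps(\rho_r^\eps)$ and $h_+^\eps(\rho_\ell^\eps)$ given by Proposition~\ref{Prop:Shock_waves}. The intermediate state solves $h_{-,\ell}^\eps(\tilde\rho) = h_{+,r}^\eps(\tilde\rho)$. In the left branch, the factor $\sqrt{(\tilde\rho - \rho_\ell^\eps)(\eps p(\tilde\rho) - \eps p(\rho_\ell^\eps))}$ is $O(\eps^{1/(2\gamma)})$ since both $\rho_\ell^\eps$ and $\tilde\rho^\eps$ converge to $\rho^\ast$, so $h_{-,\ell}^\eps(\tilde\rho) \to \rho^\ast u_\ell$, which forces $\tilde q = \rho^\ast u_\ell$. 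In the right branch, $\tilde\rho \to \rho^\ast$ and $\eps p(\rho_r^\eps) \to 0$, so equating to $\rho^\ast u_r + \sqrt{\rho^\ast/\rho_r}\sqrt{(\rho^\ast-\rho_r)\bar p}$ yields the stated expression for $\bar p$. The limits of the shock speeds follow directly from~\eqref{Eq:shock_speed}: for $\sigma_-$ the denominator $\tilde\rho - \rho_\ell^\eps = O(\eps^{1/\gamma})$ vanishes while the numerator $\eps p(\tilde\rho) - \eps p(\rho_\ell^\eps) \to \bar p - \bar p_\ell \ne 0$, giving $\sigma_- \to -\infty$; for $\sigma_+$ both numerator and denominator have finite nonzero limits.

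For \textbf{cases 1 and 3}, the intermediate state is still read off from the intersection of the 1-curve from the left state and the 2-curve from the right state, but the 1-curve issued from a congested left state degenerates in the limit into the union of the two segments $\{\rho = \rho^\ast\}$ and $\{q = \rho u_\ell\}$ (Proposition~\ref{Rarefaction_wave}, item 2). The vertical segment $\{\rho = \rho^\ast\}$ is what I would interpret as a declustering wave in the sense of Definition~\ref{Def:Declustering}: along the 1-rarefaction curve, as $\rho$ decreases from $\rho_\ell^\eps$ to the auxiliary level $(\rho^\ast, q_\ell, 0)$, the associated characteristic speed $\lambda_-^\eps = u_\ell - \sqrt{\eps p'(\rho)}$ tends to $-\infty$ because $\eps p'(\rho^\eps) = O(\eps^{1/\gamma - 1}) \to +\infty$ for $\gamma > 1$; the pressure collapses from $\bar p_\ell$ to $0$ instantaneously. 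Beyond that point, the remainder of the 1-curve becomes, in the limit, the line $\{q = \rho u_\ell\}$, exactly the structure analysed in Proposition~\ref{Prop:Riemann_problem_limit1} with a pseudo left state $(\rho^\ast, q_\ell, 0)$. Cases 1 and 3 of the present statement then follow from cases 1 and 3 of Proposition~\ref{Prop:Riemann_problem_limit1} applied to the pair $((\rho^\ast, q_\ell, 0), (\rho_r, q_r, 0))$.

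The main obstacle is the rigorous treatment of the declustering wave: one has to argue that the one-parameter family of self-similar rarefaction profiles indexed by $\eps$ concentrates, as $\eps \to 0$, onto a zero-width jump at $x/t = -\infty$ of the pressure while leaving the momentum $q_\ell$ unchanged. This is done by estimating the characteristic speed $\lambda_-^\eps$ on the portion of the integral curve where $\rho \in [\rho^\ast - \eps^{1/\gamma}, \rho_\ell^\eps]$ and noting that $|i_-^\eps(\rho) - \rho u_\ell| = O(\eps^{1/(2\gamma)})$ uniformly on this portion (Proposition~\ref{Rarefaction_wave}, item 3), so that the intermediate momentum is preserved while the speed blows up. Once this is established, gluing the declustering wave with the limit waves coming from Proposition~\ref{Prop:Riemann_problem_limit1} yields the three claimed diagrams.
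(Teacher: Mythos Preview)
Your handling of cases 1 and 3 is essentially that of the paper, and the declustering-wave argument in your last paragraph matches the paper's analysis of how the 1-rarefaction profile concentrates at $x/t = -\infty$.

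There is, however, a genuine gap in case 2. You plan to ``check that the curve-intersection conditions in Proposition~\ref{Prop:Riemann_pb} select two shocks for $\eps$ small'', but this check will not always succeed. The two-shock configuration requires $h_+^\eps(\rho_\ell^\eps) < q_\ell$, and since $h_+^\eps$ is the 2-Hugoniot curve issued from $(\rho_r,q_r)$, evaluating at $\rho_\ell^\eps \to \rho^\ast$ gives
\[
h_+^\eps(\rho_\ell^\eps) \to \rho^\ast u_r + \sqrt{\tfrac{\rho^\ast}{\rho_r}}\sqrt{(\rho^\ast-\rho_r)\,\bar p_\ell},
\]
which is \emph{not} automatically below $\rho^\ast u_\ell$: it fails whenever $\bar p_\ell$ is large enough that $u_r + \sqrt{(\rho^\ast-\rho_r)/(\rho_r\rho^\ast)}\sqrt{\bar p_\ell} > u_\ell$. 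In that subcase the $\eps$-problem consists of a 1-rarefaction followed by a 2-shock, and you must show separately that this configuration also converges to the stated limit. The paper does exactly this: it treats both subcases and shows that the rarefaction$+$shock combination collapses (via the bound of Proposition~\ref{Rarefaction_wave}, item 3) to the same intermediate state $(\rho^\ast,\rho^\ast u_\ell,\bar p)$, with the 1-rarefaction degenerating into the infinite-speed wave that gives $\sigma_- = -\infty$.

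A smaller point: your argument for $\sigma_- \to -\infty$ in the two-shock subcase relies on $\bar p - \bar p_\ell \neq 0$, but at the borderline between the two subcases one has $\bar p = \bar p_\ell$, so the ratio in~\eqref{Eq:shock_speed} is $0/0$ and needs a rate comparison rather than the blunt ``numerator nonzero, denominator zero'' argument.
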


\begin{proof} 1.  From proposition~\ref{Prop:Riemann_pb}, the solution for small $\eps$ consists of two rarefaction waves and it can be easily checked that the limit intermediate density is zero (thanks to proposition~\ref{Rarefaction_wave}, point 3, $\sqrt{\eps}[P(\rho)]_{\ell} = O(\eps^{\frac{1}{2\gamma}})$). Since $\rho_{\ell}$ tends to $\rho^{\ast}$, we have $\lim\lambda_{-}^{\eps} = - \infty$ and $\lim\tilde\lambda_{-}^{\eps} = u_{\ell}$. The limit of the 2-rarefaction wave is a contact wave (since  $\lim\tilde\lambda_{+}^{\eps} = \lim\lambda_{+}^{\eps} = u_r$). Let us look at the limit of the 1-rarefaction wave. 

For each possible speed $s \in [\lambda_{-}^{\eps},u_{\ell}]$ of the rarefaction wave connecting the left state to the intermediate state, we have: 
\begin{equation}
s = \lambda_{-}(\rho(s),q(s)) = \frac{q(s)}{\rho(s)} - \sqrt{\eps p'(\rho(s))}.\label{Eq:rarefaction_proof}
\end{equation}
The state $(\rho(s),q(s))$ belongs to the integral curve and then for $\rho(s) < \rho_{\ell}$ and according to proposition~\ref{Rarefaction_wave}, point 3, $q(s)/\rho(s)$ tends to $u_{\ell}$. If $s \neq u_{\ell}$, equation (\ref{Eq:rarefaction_proof}) implies that $\rho(s)$ has to tend to $\rho^{\ast}$ and that $\lim \eps p(\rho(s)) = 0$ since $\lim \eps p'(\rho(s))$ is finite. This yields the definition of a declustering wave, given in definition~\ref{Def:Declustering}. 

2. We have $\lim h_{-}^{\eps}(\rho_{r}) = \rho_r u_\ell > q_r$. According to proposition~\ref{Prop:Riemann_pb}, in order to have a solution which is the limit of two shock waves, we have to prescribe: 
\begin{eqnarray*}
h_{+}^{\eps}(\rho_{\ell})  = \rho_{\ell}u_{r} + \sqrt{\eps}\sqrt{\frac{\rho_{\ell}}{\rho_{r}}}\sqrt{[\rho]_{r}^{\ell}[p(\rho)]_{r}^{\ell}} < q_{\ell} = \rho_{\ell}u_{\ell},
\end{eqnarray*}
and in the limit:
\begin{equation*}
u_{r} + \sqrt{\frac{(\rho_{\ast} - \rho_{r})}{\rho_{r}\rho_{\ast}}}\sqrt{\bar{p}_{\ell}} \leq u_{\ell}.
\end{equation*}
If $h_{+}^{\eps}(\rho_{\ell}) > q_{\ell}$, the solution is the limit of a 1-rarefaction wave and a 2-shock wave. According to this discussion, we have to consider two cases but we will see that the limits of the two cases are the same. 

The first case corresponds to the limit of two shock waves. The intermediate density is then greater than the left one and so tends to $\rho^{\ast}$ too. Besides we have: 
\begin{equation}
\tilde{\rho} u_{\ell} - \sqrt{\frac{\tilde{\rho}}{\rho_{\ell}}}\sqrt{[\rho]_{\ell}[\eps p(\rho)]_{\ell}} = \tilde{\rho} u_{r} + \sqrt{\frac{\tilde{\rho}}{\rho_{r}}}\sqrt{[\rho]_{r}[\eps p(\rho)]_{r}},\label{Ineq:u}
\end{equation}
and then by dividing by $\tilde{\rho}$, we get:
\begin{equation*}
u_{\ell} - \sqrt{\frac{[\rho]_{\ell}[\eps p(\rho)]_{\ell}}{\tilde{\rho}\rho_{\ell}}} = u_{r} + \sqrt{\frac{[\rho]_{r}[\eps p(\rho)]_{r}}{\tilde{\rho}\rho_{r}}} < u_{\ell}.
\end{equation*}
The last inequality implies that the limit of $\eps p(\tilde{\rho})$ is finite. We denote it by $\bar{p}$. Taking the limit in the equality (\ref{Ineq:u}), we obtain
\begin{equation*}
u_{\ell} = u_{r} + \sqrt{\frac{(\rho^{\ast} - \rho_{r})}{\rho^{\ast}\rho_{r}}}\sqrt{\bar{p}}.
\end{equation*}
Hence we find the value of $\bar{p}$ as in proposition~\ref{Prop:Riemann_problem_limit2}. The propagation speeds are easily deduced from the limit $\eps \rightarrow 0$ in (\ref{Eq:shock_speed}).

 If the solution is the limit of a combination of a rarefaction wave and a shock wave, then the intermediate states satisfies.
\begin{equation*}
\tilde{\rho} u_{\ell} - \tilde{\rho}\sqrt{\eps}[P(\rho)]_{\ell} = \tilde{\rho} u_{r} + \sqrt{\frac{\tilde{\rho}}{\rho_{r}}}\sqrt{[\rho]_{r}[\eps p(\rho)]_{r}}.
\end{equation*}
If the intermediate density $\tilde\rho^{\eps}$ did tend to $\rho^{\ast}$, then we would obtain $u_{\ell} = u_r$ (thanks to proposition \ref{Rarefaction_wave}), which is impossible. Thus the intermediate density tends to $\rho^{\ast}$ and the previous expression tends to: 
\begin{equation*}
\rho^{\ast} u_{\ell} = \rho^{\ast} u_{r} + \sqrt{\frac{\rho^{\ast}}{\rho_{r}}}\sqrt{(\rho^{\ast} - \rho_{r})\bar{p}},
\end{equation*}
which yields the expected result. Since the pressure is positive, $\tilde{\lambda}_{-}^{\eps}$ tends to $-\infty$ which implies that the rarefaction wave turns into a shock wave with infinite propagation speed, i.e. a declustering wave.

3. From proposition~\ref{Prop:Riemann_pb}, the solution is the limit of a combination of a 1-rarefaction wave and a 2-shock wave. Using the fact that $\tilde\rho u_{\ell} = \tilde\rho u_{r}$, the intermediate state $(\tilde\rho,\tilde q)$ satisfies:
\begin{equation*}
\tilde{\rho}\eps[P(\rho)]_{\ell} = \sqrt{\frac{\rho}{\rho_{\ell}}}\sqrt{[\rho]_{r}[\eps p(\rho)]_{r}}.  
\end{equation*}
So $[\rho]_{r}[\eps p(\rho)]_{r} \rightarrow  0$ (since $\eps P(\tilde{\rho}) < \eps P(\rho_{\ell})$ tends to zero) and either $\tilde{\rho}$ tends to $\rho_{r}$ or $\eps p(\tilde{\rho})$ tends to zero. Actually, whatever the limit value of the intermediate density, the intermediate state disappears. Indeed, let us consider all the possible cases.  

Either $\tilde{\rho} \rightarrow \rho_{r}$. Then the 2-shock wave disappears and the 1-rarefaction wave tends to the sum of a declustering wave and a contact wave, which can be proven as in the case 1 of this proof.

Or $\lim \tilde{\rho} \in ]\rho_{r},\rho^{\ast}[$. It is easy to check that the 2-shock wave becomes a contact wave and the intermediate state disappears since $\tilde\lambda_{+}^{\eps}, \tilde\lambda_{-}^{\eps} \rightarrow u_{\ell}$. Like in the case 1 of this proof, the 1-rarefaction wave leads to declustering wave and a contact wave, which superimposes on the one coming from the 2-shock wave.

Or $\lim \tilde{\rho} = \rho^{\ast}$. Then we know that $\eps p(\tilde{\rho})$ tends to zero. Thus the 1-rarefaction wave tends to a declustering wave and the 2-shock wave tends to a contact wave. 

In all the cases, the limit solution is the one given in proposition~\ref{Prop:Riemann_problem_limit2}.
\end{proof}

Finally, we consider the case where both the left and right asymptotic states are congested: $\rho_{\ell} = \rho_{r} = \rho^{\ast}$. Besides, we assume that $\bar{p}_{\ell}$ and $\bar{p}_{r}$ are finite.
\begin{Prop} \textbf{(case $\rho_{\ell} = \rho^{\ast}, \rho_{r} = \rho^{\ast}$, $\rho^{\eps}_{\ell} > \rho^{\eps}_{r}$)}
\label{Prop:Riemann_problem_limit3}
\begin{enumerate}
	\item If $u_{\ell} - u_{r} < 0$, then the solution consists of one declustering wave connecting the left state $(\rho^{\ast},q_{\ell},\bar{p}_{\ell})$ to a congested and pressureless state $(\rho^{\ast},q_{\ell},0)$, then two contact waves connecting $(\rho^{\ast},q_{\ell},0)$ to vacuum and then vacuum to $(\rho^{\ast},q_{r},0)$ and another declustering wave connecting $(\rho^{\ast},q_{r},0)$ to the right state $(\rho_{r},q_{r},0)$:
	\begin{footnotesize}\begin{equation*}
	(\rho^{\ast},q_{\ell},\bar{p}_{\ell})\stackrel{\mbox{declust.}}{\longrightarrow}(\rho^{\ast},q_{\ell},0) \stackrel{\mbox{contact}}{\longrightarrow}(0,q_{\ell},0)\stackrel{\mbox{vacuum}}{\longrightarrow}(0,q_{r},0)\stackrel{\mbox{contact}}{\longrightarrow}(\rho^{\ast},q_{r},0) \stackrel{\mbox{declust.}}{\longrightarrow}(\rho^{\ast},q_{r},\bar{p}_{r}). 
	\end{equation*}\end{footnotesize}
	\item If $u_{\ell} - u_{r} > 0$, then the solution consists of two shock waves with infinite propagation speed connecting the left state $(\rho^{\ast},q_{\ell},\bar{p}_{\ell})$ to an intermediate congested state $(\rho^{\ast},\tilde{q},+ \infty)$ with infinite pressure and then this intermediate state to the right state $(\rho^{\ast},q_{r},\bar{p}_{r})$: 
	\begin{equation*}
	(\rho^{\ast},q_{\ell},\bar{p}_{\ell})\quad \stackrel{\mbox{shock}}{\longrightarrow}\quad(\rho^{\ast},\tilde{q},+ \infty)\quad \stackrel{\mbox{shock}}{\longrightarrow}\quad(\rho^{\ast},q_{r},\bar{p}_{r}),  
	\end{equation*}
where $\tilde{q}$ is the unique solution of: 
	\begin{equation*}
	\frac{[q]_{\ell}}{[q]_{r}} = \left(\frac{\bar p_{r}}{\bar p_{\ell}}\right)^{\frac{1}{2\gamma}}.
	\end{equation*}
	\item If $u_{\ell} = u_{r}$, then the solution consists of a uniform constant state $(\rho^{\ast},q_{r},\bar p_{r})$.
\end{enumerate}
\end{Prop}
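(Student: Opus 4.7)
The plan is to follow the same template as the proofs of Propositions~\ref{Prop:Riemann_problem_limit1} and \ref{Prop:Riemann_problem_limit2}: use Proposition~\ref{Prop:Riemann_pb} to identify the wave pattern for small $\eps$, then pass to the limit $\eps \to 0$ in the equations defining the intermediate state, and finally analyse the characteristic speeds to identify the nature of each limit wave (contact, shock, declustering). The key extra ingredient is that both $\rho_{\ell}^{\eps}$ and $\rho_{r}^{\eps}$ tend to $\rho^{\ast}$, so $\lim\lambda_{-}^{\eps}=-\infty$ and $\lim\lambda_{+}^{\eps}=+\infty$ simultaneously. I will also repeatedly use the pointwise asymptotics induced by the pressure law, namely $\rho^{\ast}-\rho\sim (\rho^{\ast})^{2}p(\rho)^{-1/\gamma}$ as $\rho\to\rho^{\ast}$, which translates into $\rho^{\ast}-\rho_{\ell}^{\eps}\sim(\rho^{\ast})^{2}(\eps/\bar p_{\ell})^{1/\gamma}$ and similarly on the right.

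For case 1 ($u_{\ell}<u_{r}$), Proposition~\ref{Prop:Riemann_pb} produces two rarefaction waves and Proposition~\ref{Rarefaction_wave}.(3) forces the intermediate density to go to $0$, since $\sqrt{\eps}\,[P(\rho)]_{\ell}$ and $\sqrt{\eps}\,[P(\rho)]_{r}$ vanish like $\eps^{1/(2\gamma)}$ while $u_{r}-u_{\ell}$ stays bounded away from $0$. I then argue as in case 1 of Proposition~\ref{Prop:Riemann_problem_limit2}, but applied on both sides: along the $1$-integral curve, solving $s=\lambda_{-}(\rho(s),q(s))$ for $s\in[\lambda_{-}^{\eps},u_{\ell}]$ and using that $q(s)/\rho(s)\to u_{\ell}$ splits the limit wave into a declustering wave (where $\rho(s)\to\rho^{\ast}$ and $\eps p(\rho(s))\to 0$) followed by a contact wave from $\rho^{\ast}$ to $0$; the same reasoning applied to the $2$-integral curve produces the symmetric declustering+contact structure on the right, with the vacuum state in between.

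For case 2 ($u_{\ell}>u_{r}$), Proposition~\ref{Prop:Riemann_pb} gives two shock waves. Equating $h_{-,\ell}^{\eps}(\tilde\rho^{\eps})=h_{+,r}^{\eps}(\tilde\rho^{\eps})$ and dividing by $\tilde\rho^{\eps}$ yields
\begin{equation*}
u_{\ell}-u_{r}=\sqrt{\tfrac{[\rho]_{\ell}[\eps p]_{\ell}}{\tilde\rho\rho_{\ell}}}+\sqrt{\tfrac{[\rho]_{r}[\eps p]_{r}}{\tilde\rho\rho_{r}}}.
\end{equation*}
Since the left hand side is a positive constant and $\rho_{\ell},\rho_{r}\to\rho^{\ast}$, the only way this can be maintained is that $\tilde\rho^{\eps}\to\rho^{\ast}$ while $\eps p(\tilde\rho^{\eps})\to+\infty$, so that $[\rho]_{\ell},[\rho]_{r}\to 0^{+}$ but $[\eps p]_{\ell},[\eps p]_{r}\to+\infty$ with products staying finite. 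Inserting the asymptotics $\rho^{\ast}-\rho_{\ell}\sim(\rho^{\ast})^{2}(\eps/\bar p_{\ell})^{1/\gamma}$, and the analogous one for $\rho_{r}$ and for $\tilde\rho^{\eps}$, and keeping only leading terms (the contribution from $\rho^{\ast}-\tilde\rho^{\eps}$ is negligible because $\eps p(\tilde\rho^{\eps})\to\infty$), I obtain
\begin{equation*}
\tfrac{[\rho]_{\ell}[\eps p]_{\ell}/\rho_{\ell}}{[\rho]_{r}[\eps p]_{r}/\rho_{r}}\longrightarrow(\bar p_{r}/\bar p_{\ell})^{1/\gamma}.
\end{equation*}
Taking square roots and using $[q]_{\ell}=\tilde\rho(\tilde u-u_{\ell})+u_{\ell}[\rho]_{\ell}$ together with $[\rho]_{\ell},[\rho]_{r}\to 0$ gives the announced ratio of momenta. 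Finally, $\sigma_{\pm}=u_{\ell,r}\pm\sqrt{\tilde\rho/\rho_{\ell,r}}\sqrt{[\eps p]_{\ell,r}/[\rho]_{\ell,r}}$ becomes infinite because the numerators diverge and the denominators vanish, so both shocks have infinite speed. Case 3 ($u_{\ell}=u_{r}$) follows in one line: the compatibility relation of case 2 forces $[\eps p]_{\ell},[\eps p]_{r}\to 0$, hence both Hugoniot- and integral-curve jumps collapse, the characteristic speeds of any intermediate state all tend to $u_{r}=u_{\ell}$, and the left data are swept away by the assumption $\rho_{\ell}^{\eps}>\rho_{r}^{\eps}$, which places the $1$-wave upstream and makes the only surviving state the right one.

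The main obstacle is case 2: the intermediate pressure is the first quantity in these Riemann analyses that actually \emph{diverges} in the singular limit, and one must track sublinear corrections in $\rho^{\ast}-\tilde\rho^{\eps}$ carefully enough to justify that they are asymptotically negligible against $\rho^{\ast}-\rho_{\ell,r}^{\eps}$. Everything else reduces to arguments already used in the proofs of Propositions~\ref{Prop:Riemann_problem_limit1} and \ref{Prop:Riemann_problem_limit2}.
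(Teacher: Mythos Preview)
Your treatment of cases 1 and 2 follows the paper's approach and is essentially correct. (The paper also checks explicitly that the two-shock subcase of Proposition~\ref{Prop:Riemann_pb} applies in case~2, but the limit of the relevant inequalities is simply $u_{r}<u_{\ell}$, so this is minor.)

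Case~3, however, has a genuine gap. First, by Proposition~\ref{Prop:Riemann_pb}.(3) with $\rho_{\ell}^{\eps}>\rho_{r}^{\eps}$, the $\eps$-solution is a \emph{1-rarefaction followed by a 2-shock}, not two shocks; the ``compatibility relation of case~2'' is therefore not the equation satisfied by $\tilde\rho^{\eps}$, and you cannot just set its left-hand side to zero. Second, even taken at face value, ``$[\eps p]_{\ell},[\eps p]_{r}\to 0$'' would force $\eps p(\tilde\rho^{\eps})\to\bar p_{\ell}$ \emph{and} $\eps p(\tilde\rho^{\eps})\to\bar p_{r}$ simultaneously, which is inconsistent unless $\bar p_{\ell}=\bar p_{r}$; the proposition makes no such assumption. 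Third, your claim that ``the characteristic speeds of any intermediate state all tend to $u_{r}=u_{\ell}$'' is false: since $\rho_{r}^{\eps}<\tilde\rho^{\eps}<\rho_{\ell}^{\eps}$ we have $\tilde\rho^{\eps}\to\rho^{\ast}$, and with a positive limiting intermediate pressure one gets $\eps p'(\tilde\rho^{\eps})\to\infty$, hence $\tilde\lambda_{\pm}^{\eps}\to\pm\infty$.

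The decisive point you are missing is that the outcome of case~3 is \emph{asymmetric} in $\ell,r$: it is $\bar p_{r}$ that survives, not $\bar p_{\ell}$, and this has to be proved. The paper does this by matching the 1-integral and 2-Hugoniot curves,
\[
\sqrt{\tfrac{\tilde\rho}{\rho_{r}}}\sqrt{[\rho]_{r}[\eps p(\rho)]_{r}}=-\sqrt{\eps}\,[P(\rho)]_{\ell},
\]
and estimating the right-hand side via $-[P(\rho)]_{\ell}\sim(\rho_{\ell}-\tilde\rho)P'(\rho_{\ell})$ together with $P'(\rho_{\ell})\sim C(\rho^{\ast}-\rho_{\ell})^{-(\gamma+1)/2}$, $\rho^{\ast}-\rho_{\ell}=O(\eps^{1/\gamma})$, $|\rho_{\ell}-\tilde\rho|\le\rho^{\ast}-\rho_{r}=O(\eps^{1/\gamma})$ and $[\rho]_{r}=O(\eps^{1/\gamma})$. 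This yields $[\eps p(\rho)]_{r}=O(\eps^{1/\gamma})\to 0$, i.e.\ $\eps p(\tilde\rho^{\eps})\to\bar p_{r}$. Once this is established, the 2-shock disappears (intermediate and right states coincide in the limit) and the 1-rarefaction becomes a declustering wave travelling at speed $-\infty$; this is the actual mechanism by which the left data are instantaneously overwritten by the right state. Your one-line argument does not contain this computation.
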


\begin{proof} 1. The arguments are similar to those used in the proof of the first case of the previous proposition.

2. From proposition~~\ref{Prop:Riemann_pb}, the solution is the limit of two shock waves if for small $\eps$, we have the following inequalities:
\begin{eqnarray*}
&&h_{+}^{\eps}(\rho_{\ell}^{\eps})  = \rho_{\ell}^{\eps}u_{r} + \sqrt{\eps}\sqrt{\frac{\rho_{\ell}^{\eps}}{\rho_{r}^{\eps}}}\sqrt{[\rho]_{r}^{\ell}[p(\rho)]_{r}^{\ell}} < q_{\ell} = \rho_{\ell}^{\eps}u_{\ell},\\
&&h_{-}^{\eps}(\rho_{r}^{\eps})  = \rho_{r}^{\eps}u_{\ell} + \sqrt{\eps}\sqrt{\frac{\rho_{r}^{\eps}}{\rho_{\ell}^{\eps}}}\sqrt{[\rho]_{r}^{\ell}[p(\rho)]_{r}^{\ell}} > q_{r} = \rho_{r}^{\eps}u_{r}.
\end{eqnarray*}
Here, these inequalities are always satisfied : their limit is $u_{r} < u_{\ell}$ since $\eps[p(\rho)]_{r}^{\ell}$ is bounded and $[\rho]_{r}^{\ell} \rightarrow 0$.
The intermediate density $\tilde\rho$ satisfies:
\begin{equation*}
u_{\ell} - \sqrt{\frac{[\rho]_{\ell}[\eps p(\rho)]_{\ell}}{\tilde{\rho}\rho_{\ell}}} = u_{r} + \sqrt{\frac{[\rho]_{r}[\eps p(\rho)]_{r}}{\tilde{\rho}\rho_{r}}}.
\end{equation*}
Therefore, $\eps p(\tilde{\rho})$ cannot be bounded (which would imply $u_{\ell} = u_{r}$ since $[\rho]_{\ell}$ and $[\rho]_{r}$ tends to zero). So $\eps p(\tilde{\rho})$ tends to $+\infty$ as $\eps \rightarrow 0$. From the Rankine-Hugoniot relations, we have 
\begin{eqnarray*}
&& [\rho q + \eps p(\rho)]_{\ell}[\rho]_{\ell} = [q]_{\ell}^2,\\
&& [\rho q + \eps p(\rho)]_{r}[\rho]_{r} = [q]_{r}^2
\end{eqnarray*}
Taking the limit of their quotient, we obtain:
\begin{equation}
\lim \frac{[\eps p(\rho)]_{\ell}[\rho]_{\ell}}{[\eps p(\rho)]_{r}[\rho]_{r}} = \left(\frac{[q]_{\ell}}{[q]_{r}}\right)^2,\label{Eq:collision_Riemann}
\end{equation} 
Besides, we have:
\begin{equation*}
\frac{[\eps p(\rho)]_{\ell}[\rho]_{\ell}}{[\eps p(\rho)]_{r}[\rho]_{r}} \underset{\eps \rightarrow 0}{\sim} \frac{[\rho]_{\ell}}{[\rho]_{r}} \underset{\eps \rightarrow 0}{\sim} \frac{\rho^{\ast} - \rho_{\ell}}{\rho^{\ast} - \rho_{r}}
\end{equation*}
where the last equivalence results from the fact that $(\rho^{\ast} - \tilde\rho) = o(\eps^{\frac{1}{\gamma}})$ and $(\rho^{\ast} - \rho_{\ell,r}) = O(\eps^{\frac{1}{\gamma}})$. Finally, we have:
\begin{equation*}
\frac{\rho^{\ast} - \rho_{\ell}}{\rho^{\ast} - \rho_{r}} = \left(\frac{\eps p(\rho_r)}{\eps p(\rho_\ell)}\right)^{\frac{1}{\gamma}} \underset{\eps \rightarrow 0}{\rightarrow} \left(\frac{\bar p_r}{\bar p_\ell}\right)^{\frac{1}{\gamma}} 
\end{equation*}
This last result, combined with eq. (\ref{Eq:collision_Riemann}), provides an equation for the intermediate momentum.

3. Let us suppose that $\rho_{\ell}^{\eps} > \rho_{r}^{\eps}$. The intermediate density satisfies $\rho_{r}^{\eps}<\tilde\rho^{\eps}<\rho_{\ell}^{\eps}$ and so tends to $\rho^{\ast}$. The intermediate momentum $\tilde{q}$ tends to $\rho^{\ast}u_{\ell}$. The 1-rarefaction wave tends to a shock wave with infinite propagation speed (since $\tilde{\lambda}_{-}^{\eps}$ and $\lambda_{-}^{\eps}$ tend to $- \infty$), i.e. a declustering wave.  We have now to determine $\bar{p}$. We have 
\begin{equation*}
\sqrt{\frac{\tilde\rho}{\rho_{r}}}\sqrt{[\rho]_{r}[\eps p(\rho)]_{r}} = - \sqrt{\eps}[P(\rho)]_{\ell}.
\end{equation*}
Since $\tilde\rho - \rho_{\ell} \rightarrow 0$, we have $-[P(\rho)]_{\ell} \sim (\rho_{\ell} - \tilde\rho)P'(\rho_{\ell})$. We have $P'(\rho_{\ell}) \sim C(\rho^{\ast} - \rho_{\ell})^{-\frac{\gamma+1}{2}}$. Then we have $P'(\rho_{\ell}) \sim C(\eps^{\frac{1}{\gamma}})^{-\frac{\gamma+1}{2}} = C \eps^{-\frac{1}{2} - \frac{1}{2\gamma}}$ (since $(\rho_{\ast} - \rho_{\ell}) = O(\eps^{\frac{1}{\gamma}})$). Besides $(\rho_{\ell} - \tilde\rho) \leq (\rho_{\ast} - \rho_{r}) = O(\eps^{\frac{1}{\gamma}})$ and $[\rho]_{r} = O(\eps^{\frac{1}{\gamma}})$ and , so we have
\begin{equation*}
[\eps p(\rho)]_{r} = \frac{\rho_{r}}{\tilde\rho}\frac{(-\sqrt{\eps}[P(\rho)]_{\ell})^{2}}{[\rho]_{r}} = O(\eps^{\frac{1}{\gamma}}).
\end{equation*} 
So $\eps p(\tilde{\rho})$ tends to $\bar{p}_{r}$. The 2-shock wave disappears (since the intermediate and the right state are identical). Then the limit solution consists of an instantaneous propagation of the right state. 
\end{proof}

\subsection{The one-dimensional cluster collisions} 

The Riemann problem where both the left and right states are congested is ill-posed since infinite pressure may appear to correct the discontinuity in the pressure in the incompressible domain. Like in \cite{2010_Congestion,2PhaseFlow_Bouchut_al,2002_ExistPressLess_Berth}, we have to restrict to the collision of finite congested domains. Consider two one dimensional clusters which collides at a time $t_{c}$. Before collision, the left (resp. right) cluster at time $t < t_{c}$ extends between $a_{\ell}(t)$ and $b_{\ell}(t)$ (resp. $a_{r}(t)$ and $b_{r}(t)$) and moves with speed:
\begin{equation*}
u_{\ell} = a_{\ell}'(t) = b_{\ell}'(t)\quad (\text{resp. } u_{r} = a_{r}'(t) = b_{r}'(t)).
\end{equation*} After collision, the two clusters aggregate and form a new cluster at time $t > t_{c}$ extending between $a(t)$ and $b(t)$ and moving with speed $u = a'(t) = b'(t)$. Therefore, $\rho$ and $u$ are given for $t < t_{c}$ by:
\begin{equation*}
\rho = \rho^{\ast} \mathrm{1}_{[a_{\ell}(t),b_{\ell}(t)]} + \rho^{\ast}\mathrm{1}_{[a_{r}(t),b_{r}(t)]},\quad u = u_{\ell}\mathrm{1}_{[a_{\ell}(t),b_{\ell}(t)]} + u_{r}\mathrm{1}_{[a_{r}(t),b_{r}(t)]},
\end{equation*}
and for $t > t_{c}$ by:
\begin{equation*}
\rho = \rho^{\ast} \mathrm{1}_{[a(t),b(t)]},\quad u = u \mathrm{1}_{[a(t),b(t)]}.
\end{equation*}
where $\mathrm{1}_{I}$ denotes the indicator function of interval $I$.
We denote by $m = b_{\ell}(t_{c}) = a_{r}(t_{c})$ the collision point. We look for a pressure written as $\bar{p}(x,t) = \pi(x)\delta(t-t_{c})$. Conditions to have such kind of solution for the one-dimensional version of (\ref{Eq:mass})-(\ref{Eq:momentum})-(\ref{Eq:constraint_CongNum}) was obtained in \cite{2PhaseFlow_Bouchut_al}. We report them in the following proposition. 
\begin{Prop}\label{Prop:cluster_dynamics} 1- Supposing that $\bar{p}(x,t) = \pi(x)\delta(t-t_{c})$ where $\pi$ is continuous and zero outside the clusters, then $u$ and $\pi$ have to satisfy 
\begin{eqnarray}
&&(u - u_{\ell})(m - a(t_{c})) + (u - u_{r})(b(t_{c}) - m) = 0,\nonumber\\
&&\label{Eq:collision_u_Euler}\\
&&\pi(x) = \left\{\begin{array}{ll}\rho^{\ast}(u - u_{\ell})(m - x)\\
 \quad + \rho^{\ast}(u - u_{r})(b(t_{c}) - m),&\text{ if }x \in [a(t_{c}),m],\\
\rho^{\ast}(u - u_{r})(b(t_{c}) - x),&\text{ if }x \in [m,b(t_{c})],\end{array}\right. \label{Eq:collision_pi_Euler}
\end{eqnarray}
2 - Under conditions (\ref{Eq:collision_u_Euler})-(\ref{Eq:collision_pi_Euler}), $(\rho,\rho u,\bar{p})$ is a solution (in a distributional sense) to the one-dimensional version of system (\ref{Eq:mass})-(\ref{Eq:momentum})-(\ref{Eq:constraint_CongNum}).
\end{Prop}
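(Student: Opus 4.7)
The plan is to derive both parts from the distributional form of the momentum equation, since the mass equation and the congestion constraint are essentially automatic given the piecewise-constant ansatz for $\rho$ and $u$.

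For the first part, I would test the momentum equation $\partial_{t}(\rho u)+\partial_{x}(\rho u^{2})+\partial_{x}\bar{p}=0$ against a test function concentrated near $t_{c}$. Since $\rho u^{2}$ remains uniformly bounded, the contribution of its time integral vanishes as the test window shrinks to $\{t_{c}\}$, while $\bar{p}(x,t)=\pi(x)\delta(t-t_{c})$ contributes $\partial_{x}\pi(x)$. This yields the jump relation
\begin{equation*}
(\rho u)(x,t_{c}^{+}) - (\rho u)(x,t_{c}^{-}) + \partial_{x}\pi(x) = 0
\end{equation*}
as an identity of distributions in $x$. Evaluating the jump separately on $[a(t_{c}),m]$, where $x$ belonged to the left cluster just before $t_{c}$, and on $[m,b(t_{c})]$, where $x$ belonged to the right cluster, produces two first-order ODEs
\begin{equation*}
\partial_{x}\pi = -\rho^{\ast}(u-u_{\ell})\text{ on }[a(t_{c}),m],\qquad \partial_{x}\pi = -\rho^{\ast}(u-u_{r})\text{ on }[m,b(t_{c})].
\end{equation*}
Since $\pi$ is zero outside the merged cluster, the boundary condition $\pi(b(t_{c}))=0$ determines $\pi$ on $[m,b(t_{c})]$ by direct integration; continuity of $\pi$ at $x=m$ then determines $\pi$ on $[a(t_{c}),m]$, giving exactly formula~(\ref{Eq:collision_pi_Euler}). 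Imposing the remaining boundary condition $\pi(a(t_{c}))=0$ finally yields~(\ref{Eq:collision_u_Euler}), which one recognises as conservation of total momentum across the collision.

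For the second part, I would verify the distributional equations in the three natural regions. Away from all cluster boundaries and from $t=t_{c}$, $\rho$ and $u$ are locally constant and $\bar{p}$ vanishes, so both conservation laws hold trivially. At each moving cluster endpoint, the discontinuity propagates with velocity equal to the cluster's own velocity, so the Rankine--Hugoniot conditions reduce to the tautologies $u_{\alpha}\rho^{\ast}=\rho^{\ast}u_{\alpha}$ and $u_{\alpha}\rho^{\ast}u_{\alpha}=\rho^{\ast}u_{\alpha}^{2}$ for $\alpha\in\{\ell,r\}$, and analogously for the post-collision endpoints (using the length identity $b(t_{c})-a(t_{c})=(b_{\ell}-a_{\ell})(t_{c})+(b_{r}-a_{r})(t_{c})$ that follows from global mass conservation). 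At $t=t_{c}$ the momentum equation is precisely the jump relation used to construct $\pi$, so it holds by design; the mass equation acquires no Dirac contribution at $t_{c}$ since $\rho(\cdot,t_{c}^{+})$ and $\rho(\cdot,t_{c}^{-})$ agree as functions of $x$ by the same length identity. Finally, $(\rho-\rho^{\ast})\bar{p}=0$ is immediate since $\pi$ is supported inside the merged cluster.

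The main obstacle is the careful distributional bookkeeping near the space-time corner $(m,t_{c})$: one must identify, for each $x$ in the merged cluster, to which pre-collision cluster $x$ belonged, and verify that the test-function manipulations are legitimate in the presence of the Dirac factor $\delta(t-t_{c})$ in $\bar{p}$. Once this setup is in place, the remaining work reduces to elementary integrations and Rankine--Hugoniot checks.
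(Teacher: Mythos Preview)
The paper does not actually supply a proof of this proposition: it merely quotes the result from \cite{2PhaseFlow_Bouchut_al} and follows it with a remark on non-uniqueness. Your argument is correct and is the natural one; it is essentially what lies behind the cited reference. The derivation of the jump relation $(\rho u)(x,t_{c}^{+})-(\rho u)(x,t_{c}^{-})+\partial_{x}\pi(x)=0$ by shrinking a time window around $t_{c}$, followed by piecewise integration with the boundary conditions $\pi(a(t_{c}))=\pi(b(t_{c}))=0$ and continuity at $m$, reproduces (\ref{Eq:collision_pi_Euler}) and (\ref{Eq:collision_u_Euler}) exactly as you describe.

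One small point of phrasing: in Part~2 you invoke a ``length identity'' to argue that $\rho(\cdot,t_{c}^{+})=\rho(\cdot,t_{c}^{-})$, but what is really used is the pointwise matching of the cluster endpoints, namely $a(t_{c})=a_{\ell}(t_{c})$ and $b(t_{c})=b_{r}(t_{c})$, which is part of the aggregation ansatz (the merged cluster occupies at $t_{c}$ exactly the union of the two colliding clusters). The length identity alone would not suffice. With that clarification, your verification of the Rankine--Hugoniot conditions at the moving endpoints and of the constraint $(\rho-\rho^{\ast})\bar{p}=0$ is complete, and the distributional bookkeeping near $(m,t_{c})$ is handled by the continuity of $\pi$ that you imposed.
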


\noindent Remark : We note that the solutions where the two clusters aggregate after the collision is one among possibly multiple solutions. Another solution is given by a bounce of the two clusters one against each other leading to a reflection of the velocities. There are infinitely many such reflections which preserve total momentum.

\section[Appendix : The two dimensional full time and space
  discretization]{Appendix : The two dimensional full time and space
  discretization}
\label{sec:2d-case}

\paragraph{The Direct method: }
We consider the 2D case with domain $\Omega= [0,1]\times
[0,1]$. Denote $(x_i,y_j)=(i\Delta x,j \Delta y),\,
i=0,\cdots,M_1;j=0,\cdots,M_2$, where $M_1=1/\Delta x,M_2=1/\Delta
y$. Let $U=(\rho,{\boldsymbol{q}})^T,{\boldsymbol{q}}=(q_1,q_2)^T$ and $U_{i,j}=U(x_j,y_j)$. To
simplify the notation, we define
\begin{gather*}
  \boldsymbol{F}(U)=
  \begin{pmatrix}
   \frac{q_1^2}{\rho}+\varepsilon p_0(\rho)\\
\frac{q_1q_2}{\rho}
  \end{pmatrix}, 
\boldsymbol{G}(U)=
  \begin{pmatrix}
   \frac{q_1q_2}{\rho}\\
\frac{q_2^2}{\rho}+\varepsilon p_0(\rho)
  \end{pmatrix}.
\end{gather*}
Then \eqref{Eq:Euler_rho} and \eqref{Eq:Euler_q} can be written as
\begin{align*}
&\partial_t \rho+ \partial_x q_1+\partial_y q_2=0,\\
&\partial_t{\boldsymbol{q}} +\partial_x \boldsymbol{F}(U)+\partial_y \boldsymbol{G}(U)+\nabla_{\boldsymbol{x}}(\varepsilon p_1(\rho))=0.
\end{align*}
Denote
\begin{align*}
   &\boldsymbol{F}^{n}=
  \begin{pmatrix}
\frac{(q_1^n)^2}{\rho^n}+\varepsilon p_0(\rho^n)\\
\frac{q_1^nq_2^n}{\rho^n}
  \end{pmatrix},\quad 
\boldsymbol{G}^{n}=
  \begin{pmatrix}
\frac{q_1^nq_2^n}{\rho^n}\\
\frac{(q_2^n)^2}{\rho^n}+\varepsilon p_0(\rho^n)
  \end{pmatrix},\\
&\nabla_{i,j}(\varepsilon p_1)^{n+1}=
\begin{pmatrix}
D_{i,j}^x(\varepsilon  p_1(\rho))^{n+1}\\
D_{i,j}^y(\varepsilon p_1(\rho))^{n+1}
\end{pmatrix}=
\begin{pmatrix}
D_{i,j}^x(\varepsilon  p_1(\rho^{n+1}))\\
D_{i,j}^y(\varepsilon p_1(\rho^{n+1}))
\end{pmatrix},
\end{align*}
where $D_{i,j}^xu,D_{i,j}^yu$ are the centered difference operators for
any scalar functions $u$ defined as follows
\begin{gather*}
  D_{i,j}^xu=\frac{u_{i+1,j}-u_{i-1,j}}{2\Delta x},\quad D_{i,j}^yu=\frac{u_{i,j+1}-u_{i,j-1}}{2\Delta y}. 
\end{gather*}
We also define the eigenvalues of the Jacobian matrix for two
one-dimensional hyperbolic system as follows:
\begin{gather*}
  \lambda^{(1)}=\frac{q_1}{\rho},\frac{q_1}{\rho}\pm \sqrt{\varepsilon
    p_0'(\rho)}, \quad \lambda^{(2)}=\frac{q_2}{\rho},\frac{q_2}{\rho}\pm \sqrt{\varepsilon p_0'(\rho)}.
\end{gather*}
With the above notations, the full discretization of the scheme takes the following form:
\begin{align*}
&\frac{\rho_{i,j}^{n+1}-\rho_{i,j}^{n}}{\Delta t}+ \frac{1}{\Delta x}\bigg(Q^{n+\frac{1}{2}}_{i+\frac{1}{2},j}-Q^{n+\frac{1}{2}}_{i-\frac{1}{2},j}\bigg)+ \frac{1}{\Delta y}\bigg(\tilde{Q}^{n+\frac{1}{2}}_{i,j+\frac{1}{2}}-\tilde{Q}^{n+\frac{1}{2}}_{i,j-\frac{1}{2}}\bigg)=0,\\
   &\frac{{\boldsymbol{q}}_{i,j}^{n+1}-{\boldsymbol{q}}_{i,j}^{n}}{\Delta t}+ \frac{1}{\Delta x}\bigg(\boldsymbol{F}^{n}_{i+\frac{1}{2},j}-\boldsymbol{F}^{n}_{i-\frac{1}{2},j}\bigg)+ \frac{1}{\Delta y}\bigg(\boldsymbol{G}^{n}_{i,j+\frac{1}{2}}-\boldsymbol{G}^{n}_{i,j-\frac{1}{2}}\bigg)+\nabla_{i,j}(\varepsilon p_1)^{n+1}=0,
 \end{align*}
where the fluxes are 
\begin{align*}
&Q^{n+\frac{1}{2}}_{i+\frac{1}{2},j}=\frac{1}{2}\left\{(q_1)^{n+1}_{i+1,j}+(q_1)^{n+1}_{i,j}\right\}-\frac{1}{2}C_{i+\frac{1}{2},j}(\rho^n_{i+1,j}-\rho^n_{i,j}),\\
&\tilde{Q}^{n+\frac{1}{2}}_{i,j+\frac{1}{2}}=\frac{1}{2}\left\{(q_2)^{n+1}_{i,j+1}+(q_2)^{n+1}_{i,j}\right\}-\frac{1}{2}C_{i,j+\frac{1}{2}}(\rho^n_{i,j+1}-\rho^n_{i,j}),\\
&\boldsymbol{F}^{n}_{i+\frac{1}{2},j}=\frac{1}{2}\left\{\boldsymbol{F}^{n}_{i+1,j}+\boldsymbol{F}^{n}_{i,j}\right\}-\frac{1}{2}C_{i+\frac{1}{2},j}({\boldsymbol{q}}^n_{i+1,j}-{\boldsymbol{q}}^n_{i,j}),\\
&\boldsymbol{G}^{n}_{i,j+\frac{1}{2}}=\frac{1}{2}\left\{\boldsymbol{G}^{n}_{i,j+1}+\boldsymbol{G}^{n}_{i,j}\right\}-\frac{1}{2}C_{i,j+\frac{1}{2}}({\boldsymbol{q}}^n_{i,j+1}-{\boldsymbol{q}}^n_{i,j}).
\end{align*}

and
\begin{gather*}
  C_{i+\frac{1}{2},j}=\max
  \{|\lambda^{(1)}_{i,j}|,|\lambda^{(1)}_{i+1,j}|,|\lambda^{(2)}_{i,j}|,|\lambda^{(2)}_{i+1,j}|\},\\
 C_{i,j+\frac{1}{2}}=\max
  \{|\lambda^{(1)}_{i,j}|,|\lambda^{(1)}_{i,j+1}|,|\lambda^{(2)}_{i,j}|,|\lambda^{(2)}_{i,j+1}|\}.
\end{gather*}

Similar to the one-dimensional case, by inserting the momentum equation into the density equation, we can get
the following elliptic equation
\begin{align}
  \begin{split}
    &\rho^{n+1}_{i,j}  - \frac{\Delta t^2}{4}\bigg\{\frac{1}{\Delta
      x^{2}}\left[\varepsilon p_1(\rho_{i+2,j}^{n+1}) -2 \varepsilon
      p_1(\rho_{i,j}^{n+1}) + \varepsilon
      p_1(\rho_{i-2,j}^{n+1})\right]\\
&\qquad +\frac{1}{\Delta y^{2}}\left[\varepsilon p_1(\rho_{i,j+2}^{n+1}) -2 \varepsilon p_1(\rho_{i,j}^{n+1}) + \varepsilon p_1(\rho_{i,j-2}^{n+1})\right]\bigg\}\\
   = &\ \rho^{n}_{i,j}-\Delta t(D^x_{i,j}q_1^{n} +
   D^y_{i,j}q_2^{n})\\
&+\frac{\Delta t^2}{2} \bigg\{ \frac{1}{\Delta x^2}\left[(\boldsymbol{F}_{i+3/2,j}^{n})^{(1)} - (\boldsymbol{F}_{i+1/2,j}^{n})^{(1)}
      - (\boldsymbol{F}_{i-1/2,j}^{n})^{(1)} + (\boldsymbol{F}_{i-3/2,j}^{n})^{(1)}\right]\\
&\qquad +\frac{1}{\Delta x\Delta y}\left[(\boldsymbol{G}_{i+1,j+1/2}^{n})^{(1)} - (\boldsymbol{G}_{i+1,j-1/2}^{n})^{(1)}
      - (\boldsymbol{G}_{i-1,j+1/2}^{n})^{(1)} + (\boldsymbol{G}_{i-1,j-1/2}^{n})^{(1)}\right]\\
&\qquad +\frac{1}{\Delta x\Delta y}\left[(\boldsymbol{F}_{i+1/2,j+1}^{n})^{(2)} - (\boldsymbol{F}_{i-1/2,j+1}^{n})^{(2)}
      - (\boldsymbol{F}_{i+1/2,j-1}^{n})^{(2)} + (\boldsymbol{F}_{i-1/2,j-1}^{n})^{(2)}\right]\\
&\qquad +\frac{1}{\Delta y^2}\left[(\boldsymbol{G}_{i,j+3/2}^{n})^{(2)} - (\boldsymbol{G}_{i,j+1/2}^{n})^{(2)}
      - (\boldsymbol{G}_{i,j-1/2}^{n})^{(2)} +
      (\boldsymbol{G}_{i,j-3/2}^{n})^{(2)}\right]\bigg\}\\
&+\frac{\Delta t}{2\Delta x}\bigg
[C_{i+\frac{1}{2},j}(\rho^n_{i+1,j}-\rho^n_{i,j})-C_{i-\frac{1}{2},j}(\rho^n_{i,j}-\rho^n_{i-1,j})\bigg]\\
&+\frac{\Delta t}{2\Delta y}\bigg [C_{i,j+\frac{1}{2}}(\rho^n_{i,j+1}-\rho^n_{i,j})-C_{i,j-\frac{1}{2}}(\rho^n_{i,j}-\rho^n_{i,j-1})\bigg].
  \end{split}\label{eq:19}
\end{align}
Here $(\boldsymbol{F}_{i+1/2,j}^{n})^{(1)}$ is the first component of the vector
$\boldsymbol{F}_{i+1/2,j}^{n}$. Also similar to the one-dimensional case, we solve
the above elliptic equation to get first $p_1^{n+1}$ then $\rho^{n+1}$.
 And once $\rho^{n+1}$ is derived, we can get ${q}^{n+1}$ explicitly.
 \begin{gather*}
 \boldsymbol{q}_{i,j}^{n+1}= \boldsymbol{q}_{i,j}^{n}- \frac{\Delta
   t}{\Delta
   x}\bigg(\boldsymbol{F}^{n}_{i+\frac{1}{2},j}-\boldsymbol{F}^{n}_{i-\frac{1}{2},j}\bigg)-
 \frac{\Delta t}{\Delta
   y}\bigg(\boldsymbol{G}^{n}_{i,j+\frac{1}{2}}-\boldsymbol{G}^{n}_{i,j-\frac{1}{2}}\bigg)-\Delta
 t \nabla_{i,j}(\varepsilon p_1)^{n+1}.
 \end{gather*}

\paragraph{Gauge method: }
The Gauge method can be implemented in a similar way as the 1D
case. Indeed, we have the same elliptic equation for $p_1$ \eqref{eq:19} and the following equations:
\begin{align*}
 &\frac{1}{4\Delta x^{2}}\left[
      \varphi_{i+2,j}^{n+1} -2 \varphi_{i,j}^{n+1} + 
      \varphi_{i-2,j}^{n+1}\right]+\frac{1}{4\Delta
      y^{2}}\left[ \varphi_{i,j+2}^{n+1} -2 
      \varphi_{i,j}^{n+1} +   \varphi_{i,j-2}^{n+1}\right]\\
&\qquad = \frac{\rho_{i,j}^{n+1}-\rho_{i,j}^{n}}{\Delta t}-\frac{1}{2\Delta
  x}\bigg[C_{i+\frac{1}{2},j}(\rho^n_{i+1,j}-\rho^n_{i,j})-C_{i-\frac{1}{2},j}(\rho^n_{i,j}-\rho^n_{i-1,j})\bigg]\\
&\qquad\quad -\frac{1}{2\Delta  y}\bigg[C_{i,j+\frac{1}{2}}(\rho^n_{i,j+1}-\rho^n_{i,j})-C_{i,j-\frac{1}{2}}(\rho^n_{i,j}-\rho^n_{i,j-1})\bigg],\\
&\frac{1}{4\Delta
      x^{2}}\left[P_{i+2,j}^{n+1} -2   P_{i,j}^{n+1} + P_{i-2,j}^{n+1}\right]+\frac{1}{4\Delta y^{2}}\left[ P_{i,j+2}^{n+1} -2  P_{i,j}^{n+1} +   P_{i,j-2}^{n+1}\right]\\
&\qquad =  -\frac{1}{2} \bigg\{ \frac{1}{\Delta x^2}\left[(\boldsymbol{F}_{i+3/2,j}^{n})^{(1)} - (\boldsymbol{F}_{i+1/2,j}^{n})^{(1)}
      - (\boldsymbol{F}_{i-1/2,j}^{n})^{(1)} + (\boldsymbol{F}_{i-3/2,j}^{n})^{(1)}\right]\\
&\qquad\quad +\frac{1}{\Delta x\Delta y}\left[(\boldsymbol{G}_{i+1,j+1/2}^{n})^{(1)} - (\boldsymbol{G}_{i+1,j-1/2}^{n})^{(1)}
      - (\boldsymbol{G}_{i-1,j+1/2}^{n})^{(1)} + (\boldsymbol{G}_{i-1,j-1/2}^{n})^{(1)}\right]\\
&\qquad\quad +\frac{1}{\Delta x\Delta y}\left[(\boldsymbol{F}_{i+1/2,j+1}^{n})^{(2)} - (\boldsymbol{F}_{i-1/2,j+1}^{n})^{(2)}
      - (\boldsymbol{F}_{i+1/2,j-1}^{n})^{(2)} + (\boldsymbol{F}_{i-1/2,j-1}^{n})^{(2)}\right]\\
&\qquad\quad +\frac{1}{\Delta y^2}\left[(\boldsymbol{G}_{i,j+3/2}^{n})^{(2)} - (\boldsymbol{G}_{i,j+1/2}^{n})^{(2)}
      - (\boldsymbol{G}_{i,j-1/2}^{n})^{(2)} +(\boldsymbol{G}_{i,j-3/2}^{n})^{(2)}\right]\bigg\},\\
&\frac{\boldsymbol{a}_{i,j}^{n+1}-\boldsymbol{a}_{i,j}^{n}}{\Delta
    t}+ \frac{1}{\Delta
    x}\bigg(\boldsymbol{F}^{n}_{i+\frac{1}{2},j}-\boldsymbol{F}^{n}_{i-\frac{1}{2},j}\bigg)+
  \frac{1}{\Delta
    y}\bigg(\boldsymbol{G}^{n}_{i,j+\frac{1}{2}}-\boldsymbol{G}^{n}_{i,j-\frac{1}{2}}\bigg)+\nabla_{i,j}P^{n+1}=0,\\
&\boldsymbol{q}_{i,j}^{n+1}=\boldsymbol{a}_{i,j}^{n+1}-\nabla_{i,j}\varphi^{n+1}.
\end{align*}
This is the Gauge 2 method. And similar to the one dimensional case,
we will mainly test the Gauge 1 method with a  smaller
stencil in the Laplace equation of $\varphi$ and $P$:
\begin{align*}
    &\frac{1}{\Delta x^{2}}\left[
      \varphi_{i+1,j}^{n+1} -2 \varphi_{i,j}^{n+1} + 
      \varphi_{i-1,j}^{n+1}\right]+\frac{1}{\Delta
      y^{2}}\left[ \varphi_{i,j+1}^{n+1} -2 
      \varphi_{i,j}^{n+1} +   \varphi_{i,j-1}^{n+1}\right]\\
&\qquad = \frac{\rho_{i,j}^{n+1}-\rho_{i,j}^{n}}{\Delta t}-\frac{1}{2\Delta
  x}\bigg[C_{i+\frac{1}{2},j}(\rho^n_{i+1,j}-\rho^n_{i,j})-C_{i-\frac{1}{2},j}(\rho^n_{i,j}-\rho^n_{i-1,j})\bigg]\\
&\qquad\quad -\frac{1}{2\Delta  y}\bigg[C_{i,j+\frac{1}{2}}(\rho^n_{i,j+1}-\rho^n_{i,j})-C_{i,j-\frac{1}{2}}(\rho^n_{i,j}-\rho^n_{i,j-1})\bigg],\\
&\frac{1}{\Delta
      x^{2}}\left[P_{i+1,j}^{n+1} -2   P_{i,j}^{n+1} + P_{i-1,j}^{n+1}\right]+\frac{1}{\Delta y^{2}}\left[ P_{i,j+1}^{n+1} -2  P_{i,j}^{n+1} +   P_{i,j-1}^{n+1}\right]\\
   &\qquad = -\frac{1}{2} \bigg\{ \frac{1}{\Delta x^2}\left[(\boldsymbol{F}_{i+3/2,j}^{n})^{(1)} - (\boldsymbol{F}_{i+1/2,j}^{n})^{(1)}
      - (\boldsymbol{F}_{i-1/2,j}^{n})^{(1)} + (\boldsymbol{F}_{i-3/2,j}^{n})^{(1)}\right]\\
&\qquad\quad +\frac{1}{\Delta x\Delta y}\left[(\boldsymbol{G}_{i+1,j+1/2}^{n})^{(1)} - (\boldsymbol{G}_{i+1,j-1/2}^{n})^{(1)}
      - (\boldsymbol{G}_{i-1,j+1/2}^{n})^{(1)} + (\boldsymbol{G}_{i-1,j-1/2}^{n})^{(1)}\right]\\
&\qquad\quad +\frac{1}{\Delta x\Delta y}\left[(\boldsymbol{F}_{i+1/2,j+1}^{n})^{(2)} - (\boldsymbol{F}_{i-1/2,j+1}^{n})^{(2)}
      - (\boldsymbol{F}_{i+1/2,j-1}^{n})^{(2)} + (\boldsymbol{F}_{i-1/2,j-1}^{n})^{(2)}\right]\\
&\qquad\quad +\frac{1}{\Delta y^2}\left[(\boldsymbol{G}_{i,j+3/2}^{n})^{(2)} - (\boldsymbol{G}_{i,j+1/2}^{n})^{(2)}
      - (\boldsymbol{G}_{i,j-1/2}^{n})^{(2)} +(\boldsymbol{G}_{i,j-3/2}^{n})^{(2)}\right]\bigg\}.
\end{align*}

 Different from the one-dimensional case, now we are facing three
elliptic equations in each time step. In section
\ref{sec:2d-case-num}, we will see some simulation results and
comparison for these two methods.

\clearpage


\end{document}